\documentclass[11pt]{article}
\usepackage{url}
\usepackage{enumerate}
\usepackage{amsmath}%
\usepackage{stmaryrd}
\usepackage{amsxtra}%
\usepackage{amsfonts}%
\usepackage{amssymb}%
\usepackage[ruled]{algorithm2e}
\usepackage{amsthm}
\usepackage{graphicx}
\usepackage{caption}
\usepackage{subfigure}
\usepackage{color,hyperref}
\definecolor{darkblue}{rgb}{0.7,0.0,0.0}
\hypersetup{colorlinks,breaklinks,
             linkcolor=darkblue,urlcolor=darkblue,
            anchorcolor=darkblue,citecolor=darkblue}

\usepackage[margin=3cm]{geometry}
%\documentclass[smallextended]{svjour3}                    % onecolumn
%%
%%
%%
%%\RequirePackage{fix-cm}
%\usepackage{cite}
%\usepackage{mathptmx}      % use Times fonts if available on your TeX system
%%\usepackage{url}
%%\usepackage{enumerate}
%\usepackage{amsmath}%
%%\usepackage{stmaryrd}
%\usepackage{amsxtra}%
%\usepackage{amsfonts}%
%\usepackage{amssymb}%
%\usepackage[ruled]{algorithm2e}
%%\usepackage{amsthm}
%\usepackage{graphicx}
%\usepackage{subfigure}
%%\usepackage{caption}
%%\usepackage{subcaption}

%------------------------------------------------------------
%New command definitions
%
\DeclareMathOperator*{\argmax}{argmax}

\newcommand{\iid}{\emph{i.i.d.}}
\newcommand{\dist}{\text{dist}}
\newcommand{\A}{{\mathcal A}}

\newcommand{\G}{{\mathcal G}}
\renewcommand{\H}{{\mathcal H}}
\renewcommand{\S}{{\mathbb S}}
\newcommand{\vb}[1]{\mathbf{#1}}
\renewcommand{\v}{\vb{v}}
\renewcommand{\O}{{\mathcal O}}
\renewcommand{\bar}[1]{{\overline{#1}}}
\newcommand{\R}{\mathbb{R}}
\newcommand{\N}{\mathbb{N}}
\newcommand{\Z}{\mathbb{Z}}

\newcommand{\eps}{\varepsilon}
\renewcommand{\P}{\mathbb{P}}
\newcommand{\E}{\mathbb{E}}
\renewcommand{\hat}[1]{\widehat{#1}}
\renewcommand{\tilde}[1]{\widetilde{#1}}
\renewcommand{\phi}{\varphi}
\newcommand{\beq}{\begin{equation}}
\newcommand{\eeq}{\end{equation}}

%------------------------------------------------------------
% Theorem like environments
%
\theoremstyle{plain}
\newtheorem{theorem}{Theorem}
\newtheorem{proposition}{Proposition}
\newtheorem{corollary}{Corollary}
\newtheorem{lemma}{Lemma}

\theoremstyle{definition}
\newtheorem{definition}{Definition}

\newtheorem*{acknowledgements}{Acknowledgements}
\theoremstyle{remark}
\newtheorem{remark}{Remark}
%------------------------------------------------------------

\numberwithin{equation}{section}
\begin{document}
\title{Directed last passage percolation with discontinuous weights\thanks{The research in this paper  was partially supported by NSF grant DMS-0914567.}}
\author{Jeff Calder\thanks{Department of Mathematics, University of California, Berkeley. Email: {\tt jcalder@berkeley.edu}}}

%\author{Jeff Calder}
%\institute{Jeff Calder \at
%              Department of Mathematics \\
%              University of California, Berkeley\\
%              \email{jcalder@berkeley.edu}           %  \\
%%             \emph{Present address:} of F. Author  %  if needed
%}

\date{December 12, 2014}
% The correct dates will be entered by the editor

\maketitle

\begin{abstract}
We prove that a directed last passage percolation model with discontinuous macroscopic (non-random) inhomogeneities has a continuum limit that corresponds to solving a Hamilton-Jacobi equation in the viscosity sense. This Hamilton-Jacobi equation is closely related to the conservation law for the hydrodynamic limit of the totally asymmetric simple exclusion process.  We also prove convergence of a numerical scheme for the Hamilton-Jacobi equation and present an algorithm based on dynamic programming for finding the asymptotic shapes of maximal directed paths.  
%\keywords{Directed last passage percolation \and Hamilton-Jacobi equations \and viscosity solutions \and variational problems \and totally asymmetric simple exclusion process \and hydrodynamic limit}
%\PACS{02.50.Cw \and 02.60.Lj \and 02.30.Jr}
%\subclass{35D40 \and  60F99 \and 65N06 \and 60K35}
\end{abstract}

\section{Introduction}
\label{sec:intro_ch4}

The directed last passage percolation (DLPP) problem can be formulated as follows: Let $X(i,j)$ be nonnegative independent random variables defined on the lattice $\N^2$, and define the last passage time from $(1,1)$ to $(M,N)$  by
\begin{equation}\label{eq:dlpp-intro}
L(M,N) = \max_{p \in \Pi_{M,N}} \sum_{(i,j) \in p} X(i,j),
\end{equation}
where $\Pi_{M,N}$ denotes the set of up/right paths from $(1,1)$ to $(M,N)$ in $\N^2$.  Of interest are the asymptotics of $L$ as $M,N \to \infty$, and their first order fluctuations.  

DLPP is an example of a stochastic growth model, and has  many applications in mathematical and scientific contexts.  For example, DLPP is equivalent to zero-temperature directed polymer growth in a random environment---an important model in statistical mechanics~\cite{comets2004,huse1985pinning,imbrie1988diffusion,bolthausen1989note}.  The model describes a hydrophilic polymer chain wafting in a water solution containing randomly placed hydrophobic molecules (impurities) that repel the individual monomers in the polymer chain.  Due to thermal fluctuations and the random positions of impurities, the shape of the polymer chain is best understood as a random object.  The statistical mechanical model for a directed polymer assumes that the shape of the polymer can be described by a directed path $p \in \Pi_{M,N}$, thus suppressing entanglement and U-turns.  The presence, or strength, of an impurity at site $(i,j)$ is described by a random variable $X(i,j)$, and the energy of a path $p\in \Pi_{M,N}$ is given by
\begin{equation}
  -\beta\sum_{(i,j) \in p} X(i,j),
  \label{eq:dir-polymer}
\end{equation}
where $\beta=1/T>0$ is the inverse temperature.  The typical shape of a polymer is one that minimizes \eqref{eq:dir-polymer}.  Of interest is the quenched polymer distribution on paths defined by
\begin{equation}
  Q(p;M,N) = \frac{1}{Z(M,N)}\exp\left( \beta \sum_{(i,j) \in p} X(i,j) \right),
  \label{eq:quenched-polymer}
\end{equation}
where $p \in \Pi_{M,N}$ and the normalization factor $Z(M,N)$ is called the \emph{partition function}, and is given by
\begin{equation}
  Z(M,N) = \sum_{p \in \Pi_{M,N}} \exp\left( \beta \sum_{(i,j) \in p} X(i,j) \right).
  \label{eq:partition-function}
\end{equation}
In the zero-temperature limit, i.e., $\beta\to \infty$, the quenched polymer distribution concentrates around paths maximizing \eqref{eq:dir-polymer}, and we formally have
\[
  \lim_{\beta \to \infty} \frac{1}{\beta} \log\left( Z(M,N)\right) = \max_{p \in \Pi_{M,N}} \sum_{(i,j) \in p} X(i,j) = L(M,N),
\]
  Directed polymers are related to several other stochastic models for growing surfaces, such as directed invasion percolation, ballistic deposition, polynuclear growth, and low temperature Ising models~\cite{krug1991}.  
%widom2002,

  DLPP with independent and identically distributed (\iid) exponential weights $X(i,j)$ is equivalent to the totally asymmetric simple exclusion process (TASEP), which is an important stochastic interacting particle system~\cite{ferrari2006,rost1981non}, and to randomly growing Young diagrams~\cite{johansson2000shape,vershik1995asymptotic,seppalainen1996hydrodynamic}.  Briefly, the dynamics of TASEP involve a particle configuration on the lattice $\Z$, evolving in time, with the dynamical rule that a particle jumps to the right after an exponential waiting time if the right neighboring site is empty.    The correspondence between DLPP and TASEP proceeds via the following stochastic corner growth model: Partition $\R^2$ into squares defined by the edges of the lattice $\Z^2$.  Imagine that at time $t=0$, all the squares in $[0,\infty)^2$ are colored white, while the remaining squares are colored black. For each $(i,j) \in \N^2$, assign a passage time random variable $X(i,j)$ to the square with $(i,j)$ on the northeast corner. The dynamic rule governing the growth process is the following: A white square at location $(i,j)$ is colored black exactly $X(i,j)$ time units after both its south and west neighbors become black.  The time until square $(M,N)$ is colored black is exactly $L(M,N)$---the last passage time from $(1,1)$ to $(M,N)$---and the set of all black squares is a randomly growing Young diagram.  
  
    There is a one-to-one correspondence between TASEP configurations, and configurations of black and white squares in the corner growth model. The idea is that when a white square is colored black, it corresponds to a particle jumping from a site $j$ to its necessarily vacant neighbor $j+1$.   The explicit correspondence is as follows: For every edge separating a white and black square, assign a value of 1 to vertical edges, and a value of 0 to horizontal edges.  The TASEP configuration corresponds exactly to reading these binary values sequentially from $(1,\infty)$ to $(\infty,1)$. We give this correspondence more rigorously in Section \ref{sec:formal-eq} (see Figure \ref{fig:tasep}). There are further applications of DLPP in queueing theory~\cite{baccelli2000asymptotic,glynn1991}, and the model is also related to greedy lattice animals~\cite{martin2002linear}. 

One quantity of interest in DLPP is the time constant, $U$, given by
\begin{equation}
  U(x) :=\lim_{N\to \infty} \frac{1}{N}L\left(\lfloor Nx\rfloor\right),
  \label{eq:time-const}
\end{equation}
where $x = (x_1,x_2) \in [0,\infty)^2$.
The exact form of $U$ is known for \iid~geometric weights~\cite{johansson2000shape}, and \iid~exponential weights~\cite{rost1981non}, and is given by
\begin{equation}
  U(x) = \mu(x_1 + x_2) + 2\sigma \sqrt{x_1x_2},
  \label{eq:time-const-known}
\end{equation}
where $\mu$ and $\sigma^2$ are the mean and variance, respectively, of the either geometric or exponential weights.
For more general distributions, Martin~\cite{martin2004limiting} showed that $U$ is continuous on $[0,\infty)^2$ and gave the following asymptotics at the boundary:
\[U(1,\alpha) = \mu + 2\sigma\sqrt{\alpha} + o(\sqrt{\alpha}).\]
In similar fashion to the longest increasing subsequence problem~\cite{baik1999distribution}, the fluctuations of $L$ for geometric and exponential weights  are non-Gaussian, and instead follow the Tracy-Widom distribution asymptotically~\cite{johansson2000shape}.  It is an open problem to determine $U(x)$ and the fluctuations of $L$ for weights other than geometric and exponential.

We study the DLPP problem with independent weights $X(i,j)$ that are either geometric or exponential, but not identically distributed.    For exponential DLPP, we assume that $X(i,j)$ is exponentially distributed with mean $\lambda(iN^{-1},jN^{-1})$ where $\lambda:[0,\infty)^2 \to [0,\infty)$, and we consider the aymptotics as $N\to \infty$.  The setup is identical for geometric DLPP, except that the macroscopic inhomogeneity is in the parameter $q$ of the geometric distribution.  For directed polymers, this models a macroscopic (non-random) inhomogeneity in the strength of impurities; while for TASEP, it corresponds to an inhomogeneity in the rate at which particles move to the right.    Our main result, presented in Section \ref{sec:results}, is a Hamilton-Jacobi equation for the continuum limit of this DLPP problem.
  
In the exponential case with continuous $\lambda$, Rolla and Teixeira~\cite{rolla2008} showed that $U$ has a variational interpretation.  Their result is in many ways analogous to the variational problem for the longest chain problem~\cite{deuschel1995} that we exploited in our previous work~\cite{calder2014,calder2013b}.  Macroscopic inhomogeneities have also been considered for TASEP~\cite{georgiou2010}, and for other similar growth models~\cite{rezakhanlou2002continuum}.  In particular, Georgiou et al.~\cite{georgiou2010} proved a hydrodynamic limit for TASEP with a spatially (but not temporally) inhomogeneous jump rate $c$, which may admit discontinuities.  Their result gives the limiting density profile in terms of a variational problem, and they connected this to a conservation law in the special case that the rate $c(s)$ is piecewise constant with one jump, i.e.,
\[c(s) = \begin{cases}
c_1,& s \leq 0\\
c_2,& s > 0.
\end{cases}\]
In the context of exponential DLPP, this would be equivalent to assuming that the macroscopic mean $\lambda:[0,\infty)^2\to [0,\infty)$ is given by $\lambda(x)=c_1^{-1}$ for $x_1 \geq x_2$ and $\lambda(x)=c_2^{-1}$ otherwise.  Our main result, Theorem \ref{thm:main}, gives a Hamilton-Jacobi equation for the limiting time constant in DLPP when the macroscopic inhomogeneity $\lambda$ is piecewise Lipschitz.  In the context of TASEP, this allows for a discontinuous inhomogeneous jump rate which has a spatial \emph{and} temporal dependence.

\subsection{Main result}
\label{sec:results}

Let us mention the conventions used in this paper.  We say $X$ is geometrically distributed with parameter $q$ if 
\[\P(X = k) = (1-q)^kq,\]
for $k\in \{0,1,2,3,\dots\}$ and $0 < q \leq 1$, so that we have
\begin{equation}\label{eq:geom-mom}
\E(X) = \frac{1-q}{q} \ \ \text{and} \ \ \text{Var}(X) = \frac{1-q}{q^2}.
\end{equation}
We say that $X$ is exponentially distributed with mean $\lambda\geq 0$ if for $\lambda>0$ we have
\[\P(X \in dx) = \frac{1}{\lambda} e^{-\frac{x}{\lambda}} dx \ \ \ \text{for } x \in [0,\infty),\]
and when $\lambda=0$ we have $X=0$ with probability one.  Here we have
\begin{equation}\label{eq:exp-mom}
\E(X) = \lambda \ \ \text{and} \ \ \text{Var}(X) = \lambda^{2}.
\end{equation}
In order to ensure that our results are applicable to both exponential and geometric DLPP, we parameterize these distributions instead by their mean $\mu$.  For the exponential distribution there is no change; we have $\lambda = \mu$.  For the geometric distribution, we have by \eqref{eq:geom-mom} that a geometric random variable with mean $\mu \geq 0$ has parameter
\begin{equation}
  q = \frac{1}{1 + \mu}.
  \label{eq:qmu}
\end{equation}
For both cases, the variance is of course a function of the mean; in the exponential case we have $\sigma =\mu$, and in the geometric case we have $\sigma = \sqrt{\mu(1+\mu)}$.  

Let us now present our main result.  We consider the following two-sided DLPP model, similar to~\cite{baik2005phase,arous2011current,corwin2010limit,baik2000limiting,borodin2008airy}. Let $X(i,j)$ be independent nonnegative random variables defined on the lattice $\N_0^2$, where $\N_0=\{0,1,2,\dots\}$.  Let $L(M,N;Q,P)$ denote the last passage time from $(M,N)\in \N^2_0$ to $(Q,P)\in \N^2_0$, where $M \leq Q$ and $N \leq P$.  This is defined as follows:
\begin{equation}\label{eq:dlpp}
L(M,N;Q,P) = \max_{p \in \Pi_{(M,N),(Q,P)}} \sum_{(i,j) \in p} X(i,j),
\end{equation}
where $\Pi_{(M,N),(Q,P)}$ denotes the set of up/right paths from $(M,N)$ to $(Q,P)$ in $\N_0^2$. The macroscopic inhomogeneity is described by functions $\mu:[0,\infty)^2 \to [0,\infty)$ and $\mu_s:\partial \R^2_+ \to [0,\infty)$, where $\R_+=(0,\infty)$.  Specifically, given a parameter $N$ we make the following assumption:
  \begin{align}
    &\text{The weights } X(i,j) \text{ are independent with mean } \notag \\
    &\hspace{0.1\textwidth}\E(X(i,j)) = \begin{cases}
      \mu(iN^{-1},jN^{-1}),& \text{if } (i,j) \in \N^2,\\
      \mu(iN^{-1},jN^{-1}) + \mu_s(iN^{-1},jN^{-1}),& \text{if } i=0 \text{ or } j=0.
    \end{cases}\label{eq:weights-assumption}
 \end{align}
  The term $\mu$ corresponds to the macroscopic mean within the bulk $\R^2_+$, and the term $\mu_s$ corresponds to an additional source active only on the boundary $\partial \R^2_+$. 
  
We also assume the weights $X(i,j)$ are either all geometrically distributed, or all expontially distributed. We can construct the random variables $X(i,j)$ on a common probability space as follows. Let $Y(i,j)$ be \iid~exponential random variables with mean $\lambda=1$, where $i,j \in \N_0$. In the exponential case, we can simply set 
  \[X(i,j) = \begin{cases}
  \mu(iN^{-1},jN^{-1}) Y(i,j),& \text{if }  (i,j) \in \N^2,\\
 \left(\mu(iN^{-1},jN^{-1}) + \mu_s(iN^{-1},jN^{-1})\right) Y(i,j),&\text{if } i=0 \text{ or } j=0.
 \end{cases}\]
 This setup is similar to~\cite{rolla2008}.
 In the geometric case, we note that if $Y$ is an exponential random variable with mean $\lambda=1$, then for any $\nu>0$, $X=\lfloor \nu Y\rfloor$ is geometrically distributed with parameter $q = 1 - e^{-\frac{1}{\nu}}$. In order to obtain $\E(X) = \mu>0$, we need that
 \[\frac{1}{1+\mu} = q = 1 - e^{-\frac{1}{\nu}},\]
 which gives that $\nu = 1/(\log(1+\mu) - \log(\mu))$. If $\mu=0$, then we set $\nu =0$. Hence, let us set $\nu(x)= 1/(\log(1+\mu(x)) - \log(\mu(x)))$ for $\mu(x)>0$ and $\nu(x) = 0$ when $\mu(x) = 0$. We make a similar definition for $\nu_s$. Setting
  \[X(i,j) = \begin{cases}
 \left\lfloor \nu(iN^{-1},jN^{-1}) Y(i,j)\right\rfloor,& \text{if }  (i,j) \in \N^2,\\
\left\lfloor\left(\nu(iN^{-1},jN^{-1}) + \nu_s(iN^{-1},jN^{-1})\right) Y(i,j)\right\rfloor,&\text{if } i=0 \text{ or } j=0,
 \end{cases}\]
 we see that $X(i,j)$ are independent geometric random variables satisfying \eqref{eq:weights-assumption}. 

 Before stating the, somewhat technical, hypotheses on $\mu$ and $\mu_s$, we need to introduce some notation. We say a curve $\Gamma$ in $\R^2$ is continuous and strictly increasing if it can be parameterized in the form
\[\Gamma: t \mapsto (t,f(t)) \ \ \text{for } t \in I,\]
where $f:I \to \R$ is continuous and strictly increasing, and $I$ is an interval in $\R$.    We make a similar definition for strictly decreasing.  Notice that a continuous strictly increasing (resp.~decreasing) curve can also be parameterized in the form $t \mapsto (f(t),t)$ where $f:I\to\R$ is continuous and strictly increasing (resp.~decreasing). For simplicity, we will also use $\Gamma$ to denote the locus of points that lie on the curve $\Gamma$.

Let $\Gamma$ be a continuous strictly decreasing curve in $[0,1]^2$ with endpoints $(1,0)$ and $(0,1)$, and let $\Omega\subset [0,\infty)^2$ denote the bounded component of the complement of $\Gamma$ in $[0,\infty)^2$. Let $\{\Gamma_i\}_{i \in \Z}$ be a locally finite non-intersecting collection of continuous strictly increasing curves. For each $i$ we assume one endpoint of $\Gamma_i$ is on $\partial ([0,\infty)^2 \setminus \Omega)$ and the other endpoint is at $\infty$, i.e., the curve is unbounded.    The complement of $\cup_i \Gamma_i$ in $[0,\infty)^2\setminus \Omega$ therefore consists of a family $\{\Omega_i\}_{i \in \Z}$ of connected components. Each curve $\Gamma_i$ is on the boundary of exactly two components, which we may assume are labeled $\Omega_i$ and $\Omega_{i-1}$.   See Figure \ref{fig:domain} for an illustration of these quantities.  
\begin{figure}
\centering
\includegraphics[width=0.6\textwidth]{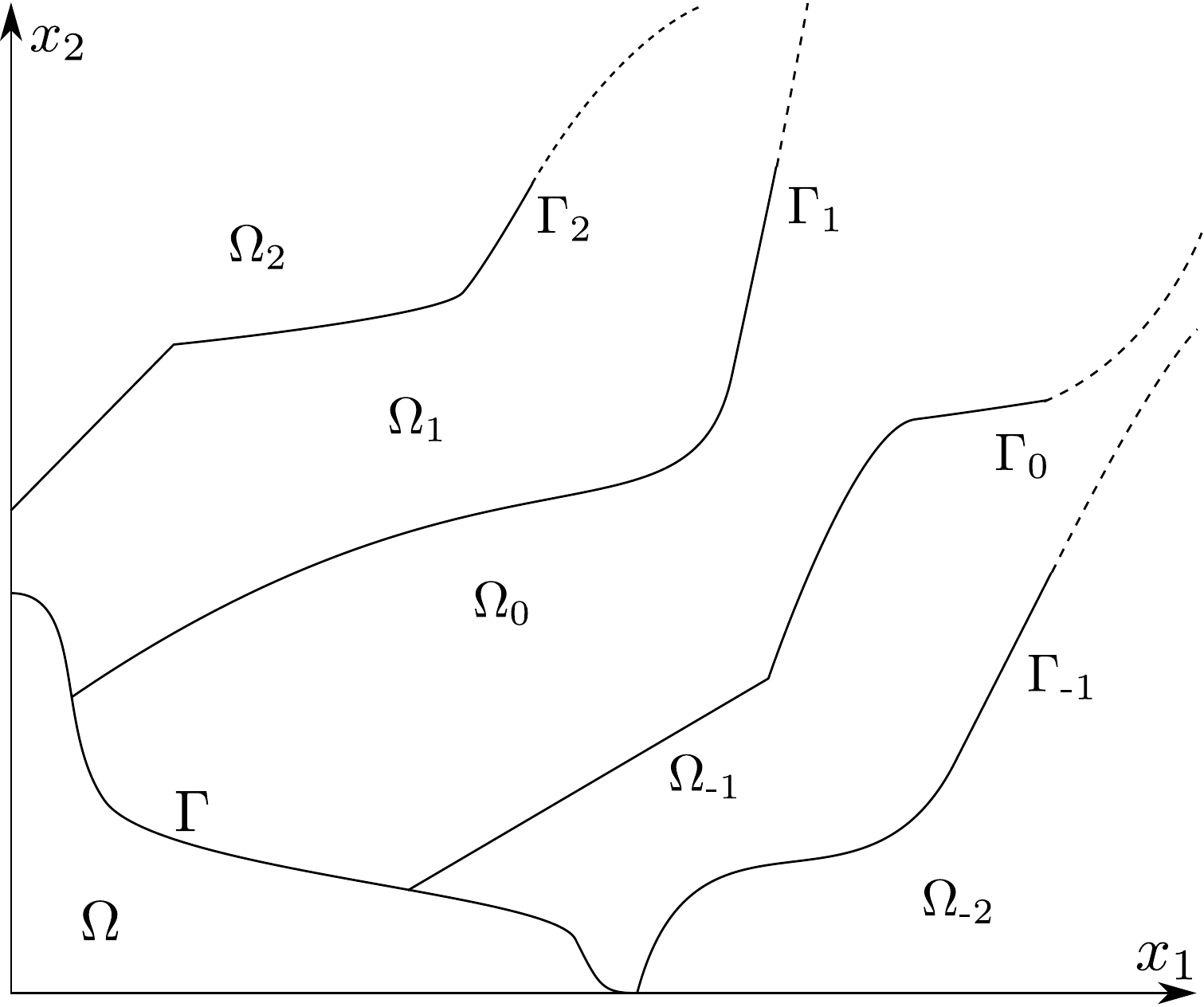}
\caption{Depiction of quantities $\Omega_i$ and $\Gamma_i$.  The function $\mu$ is assumed to be Lipschitz with constant $C_{lip}$ when restricted to any $\Omega_i$, and $\mu=0$ on $\Omega$.}
\label{fig:domain}
\end{figure}

We place the following assumptions on $\mu$ and $\mu_s$:
\begin{itemize}
\item[(F1)] The function $\mu:[0,\infty)^2\to [0,\infty)$ is bounded and upper semicontinuous, $\mu\vert_\Omega = 0$, and there exists a constant $C_{lip}$ such that for every $i \in \Z$, $\mu\vert_{\Omega_i}$ is Lipschitz continuous with constant $C_{lip}$.
\item[(F2)] The source term $\mu_s:\partial \R^2_+ \to [0,\infty)$ is bounded and upper semicontinuous with a locally finite set of discontinuities.
\end{itemize}
Throughout the paper we will regard $\mu_s$ as a function on $[0,\infty)^2$ by setting $\mu_s=0$ on $\R^2_+$.    We also make the following technical assumption: 
\begin{itemize}
\item[(F3)] For every  $i \in \Z$ and $x \in \Gamma_i$, there exists $\eps > 0$ and $\zeta\in \{-1,+1\}$ such that for all $y \in B_\eps(x)\cap \Gamma_i$ we have 
\begin{equation}
\zeta \left(\lim_{\Omega_{i-1} \ni z \to y} \mu(z) - \lim_{\Omega_i \ni z \to y} \mu(z)\right) \geq 0.
\end{equation}
\end{itemize}

Our main result is the following continuum limit:
\begin{theorem}\label{thm:main}
  Let $\mu:[0,\infty)^2 \to [0,\infty)$ satisfy (F1) and (F3), and let $\mu_s:\partial \R^2_+ \to [0,\infty)$ satisfy  (F2).   Suppose that the weights $X(i,j)$ satisfy \eqref{eq:weights-assumption} and are either all exponential, or all geometric random variables, constructed on a common probability space as above. In the exponential case, set $\sigma = \mu$, and in the geometric case, set $\sigma = \sqrt{\mu(1+\mu)}$.  Then with probability one we have
\begin{equation}\label{eq:main-limit}
\frac{1}{N} L(0;\lfloor N\cdot\rfloor) \longrightarrow U \ \ \text{locally uniformly on } [0,\infty)^2,
\end{equation}
where $U$ is the unique monotone viscosity solution of 
\[\text{\emph{(P)}}\left\{\begin{aligned}
(U_{x_1} - \mu)_+ (U_{x_2} - \mu)_+ &= {\sigma}^2& &\text{on } \R^2_+,\\
U &= \phi& &\text{on } \partial \R^2_+,
\end{aligned}\right.\]
and $\phi(x) = (x_1 + x_2)\int_0^1 \mu(tx) + \mu_s(tx) \, dt$.
\end{theorem}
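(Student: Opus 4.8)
\section*{Proof proposal}

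The plan is to follow the standard ``half-relaxed limits'' strategy for passing from discrete stochastic problems to viscosity solutions of Hamilton-Jacobi equations, adapted to handle the discontinuities in $\mu$ and $\mu_s$. The argument splits into four pieces: (i) an a priori Lipschitz-type bound and subadditivity structure for $\frac1N L(0;\lfloor N\cdot\rfloor)$ so that the upper and lower relaxed limits $\bar U = \limsup^* \frac1N L$ and $\underline U = \liminf_* \frac1N L$ are finite and well-defined on $[0,\infty)^2$; (ii) showing $\bar U$ is a viscosity subsolution and $\underline U$ a supersolution of (P) in the bulk $\R^2_+$; (iii) identifying the boundary values on $\partial\R^2_+$, i.e.\ $\bar U \le \phi \le \underline U$ there, where $\phi$ is the explicit integral of $\mu + \mu_s$ along rays; and (iv) a comparison principle for (P) forcing $\bar U \le \underline U$, hence equality and convergence of the full sequence. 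The monotonicity restriction (``monotone viscosity solution'') is what makes the comparison principle available despite the degeneracy of the Hamiltonian $H(p) = (p_1-\mu)_+(p_2-\mu)_+ - \sigma^2$, which is only quasi-convex and not coercive.

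For step (ii), the key mechanism is the dynamic programming principle: for $(M,N)$ with coordinates $\asymp Nx$ and any ``later'' point $(Q,P) \asymp N(x+he)$ for a feasible direction, $L(0;(Q,P)) = L(0;(M,N)) + L((M,N);(Q,P))$ along the optimal path, and $\frac1N L((M,N);(Q,P))$ concentrates around $h\,g_\mu(e)$ where $g_\mu$ is the i.i.d.\ time constant \eqref{eq:time-const-known} evaluated at the \emph{local} mean $\mu(x)$ (this uses (F1): $\mu$ is Lipschitz on each $\Omega_i$, so on a small ball it is nearly constant, and the i.i.d.\ shape theorem of Rost/Johansson applies with error controlled by the Lipschitz constant $C_{lip}$). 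Taking $h\to0$ turns the DPP into the inequality $\bar U(x+he) - \bar U(x) \gtrsim h\,g_{\mu(x)}(e)$, which upon optimizing over test functions and directions yields the subsolution property; the Legendre-type duality between the cone of feasible velocities with cost $g_\mu$ and the Hamiltonian $H$ is exactly the computation that $U_{x_1}U_{x_2} = \sigma^2$ type equations are the Hamilton-Jacobi form of the linear growth model (as in the author's earlier work \cite{calder2014,calder2013b}). The $(\cdot)_+$ truncations arise because when $\mu>0$ every path collects at least the ``drift'' $\mu(x_1+x_2)$ for free, so only the excess gradient $U_{x_i}-\mu$ is constrained. Near a curve $\Gamma_i$ where $\mu$ jumps, condition (F3) guarantees the jump has a consistent sign in a neighborhood, which is what lets one still get one-sided test-function inequalities there without the relaxed limits developing the wrong kind of kink; essentially (F3) rules out the ``unstable'' discontinuity configuration and makes $\bar U,\underline U$ continuous across $\Gamma_i$.

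For step (iii), on the axis $\{x_2=0\}$ (and symmetrically $\{x_1=0\}$) the only admissible path from the origin to $\lfloor Nx\rfloor$ is the straight segment along the axis, so $L(0;\lfloor Nx\rfloor)$ is a sum of independent weights with means $\mu(iN^{-1},0)+\mu_s(iN^{-1},0)$; the strong law gives $\frac1N L \to \int_0^{x_1}(\mu(s,0)+\mu_s(s,0))\,ds = x_1\int_0^1(\mu(tx_1,0)+\mu_s(tx_1,0))\,dt$, matching $\phi$. Upper semicontinuity of $\mu,\mu_s$ is used to control the limsup from the discrete Riemann sums, and the locally finite discontinuity sets in (F2) ensure the integral is well-behaved. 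One also needs that the relaxed limits attain these boundary values in the viscosity sense (the usual subtlety that a subsolution may only satisfy $U \le \phi$ or the equation at the boundary), handled by the barrier/explicit-subsolution constructions standard for this class of equations.

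The main obstacle I expect is step (iv) together with the boundary-discontinuity bookkeeping in (ii): proving a comparison principle for the degenerate, spatially discontinuous Hamiltonian. The Hamiltonian $H(x,p) = (p_1-\mu(x))_+(p_2-\mu(x))_+ - \sigma(x)^2$ is discontinuous in $x$ across the $\Gamma_i$'s, so the classical Crandall-Ishii argument does not apply off the shelf; one must either use the theory of Hamilton-Jacobi equations with discontinuous coefficients (Ishii, Camilli-Siconolfi, or the Deckelnick-Elliott framework), or exploit the very specific product structure here. I anticipate the cleanest route is to restrict attention to monotone solutions (the natural class, since $L$ is monotone in each coordinate), reduce the equation on each $\Omega_i$ to one with Lipschitz coefficients where standard comparison holds, and then glue across $\Gamma_i$ using (F3) to check that the viscosity inequalities are compatible on the interface --- effectively showing that no spurious boundary layer forms. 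Verifying this gluing, and checking that $\phi$ as defined is exactly the trace forced by the problem (including at the corner $(0,0)$ and where $\Gamma$ meets the axes), is where the bulk of the technical work will lie.
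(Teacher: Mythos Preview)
Your proposal follows the Barles--Perthame half-relaxed-limits template, which is a genuinely different route from the paper's. The paper never works with $\limsup^*$ and $\liminf_*$ of $\frac{1}{N}L$. Instead it introduces the variational value function $W_{\mu,\sigma}(x,y)=\sup_\gamma J_{\mu,\sigma}(\gamma)$ (Section~\ref{sec:var}) as the central object, proves \emph{directly} from curve surgery that $W$ is uniformly continuous (Theorem~\ref{thm:reg}) and solves the HJB equation (Theorem~\ref{thm:hjb}), and then obtains the stochastic limit $\frac{1}{N}w_N(\cdot,y)\to W(\cdot,y)$ by a sandwich: approximate $\mu$ above and below by its Lipschitz inf/sup-convolutions $\mu_k,\mu^k$, cite Rolla--Teixeira~\cite{rolla2008} for each Lipschitz level, and use a PDE perturbation result (Theorem~\ref{thm:perturbation}) to show $W_{\mu_k,\sigma_k},W_{\mu^k,\sigma^k}\to W_{\mu,\sigma}$. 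The boundary source $\mu_s$ is then reattached via a discrete dynamic programming identity (equation~\eqref{eq:dpp1}) together with a law of large numbers on the axes (Lemma~\ref{lem:slln}). Thus the comparison principle is used not to squeeze $\bar U$ and $\underline U$ together, but only to prove stability of the variational formula under Lipschitz regularization of $\mu$.

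What each approach buys: yours is more self-contained and PDE-native, and if it went through would not need~\cite{rolla2008} as a black box. The paper's route, by passing through the variational formula, gets regularity of the limit for free (Theorem~\ref{thm:reg} never touches viscosity theory) and, crucially, only needs comparison for \emph{continuous, truncatable} subsolutions --- a structural property (Proposition~\ref{prop:Utrunc}) tailored to the value function. This is precisely where your plan is most exposed: your step~(iv) requires comparison for a merely USC subsolution $\bar U$ against an LSC supersolution $\underline U$ with $H$ discontinuous in $x$ across the $\Gamma_i$. The paper's comparison theorem for the discontinuous case (Theorem~\ref{thm:comp-trunc}) needs the subsolution to be truncatable, and there is no obvious way to verify that $\bar U\circ\pi_\xi$ remains a subsolution without already knowing $\bar U$ arises from a variational problem. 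So to close your argument you would likely have to either import the variational representation anyway, or establish a priori continuity of $\bar U,\underline U$ across each $\Gamma_i$ before invoking comparison --- and the latter is essentially the content of Theorem~\ref{thm:reg}.
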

Here, $U_{x_1}$ and $U_{x_2}$ denote the partial derivatives of $U$, $t_+$ denotes the positive part of $t$ given by $\max(t,0)$, and by monotone we mean that $U$ is monotone non-decreasing with respect to all variables. 

Theorem \ref{thm:main} is an extension of our previous work~\cite{calder2014,calder2013b}, in which we proved a similar result for the longest chain problem. This result can be viewed as a type of stochastic homogenization~\cite{souganidis1999}, where the effective Hamiltonian is given in  (P).  A similar stochastic homogenization result has been obtained recently for first passage percolation~\cite{krishnan2013}, though in that case the exact form of the effective Hamiltonian is unknown.  The Hamilton-Jacobi equation (P) is also closely related to the conservation law for the hydrodynamic limit of TASEP~\cite{georgiou2010}, and in Section \ref{sec:formal-eq} we show a formal equivalence between the two continuum limits. 

We believe this new Hamilton-Jacobi equation will prove to be a useful tool for studying the DLPP problem, both theoretically and numerically.  As an example, in Section \ref{sec:max-curve} we show how to combine the numerical solution of this Hamilton-Jacobi equation with dynamic programming to find the asymptotic shapes of optimal paths.  We also believe that this work will provide a new perspective on the hydrodynamic limit of TASEP, and may be useful for studying the corresponding conservation law.  

Some remarks on the hypotheses (F1), (F2), and (F3) are in order. First, the assumption that $\mu$ and $\mu_s$ are bounded in (F1) and (F2) is made for simplicity. It can be replaced by the assumption that $\mu$ and $\mu_s$ are bounded on compact sets, with minor changes to the proofs. Recall that in the exponential case, we have $\sigma^2 = \mu^2$, and in the geometric case, we have $\sigma^2 = \mu(1+\mu)$.  Thus, if $\mu$ satisfies (F1), (F3), then so will $\sigma^2$, though possibly with a larger Lipschitz constant $C_{lip}$.  Since it is convenient for the analysis, we will often regard $\mu$ and $\sigma^2$ as independent functions both satisfying (F1) and (F3). We will only need to recall the relationship between $\mu$ and $\sigma$ at a few key points.  In particular, the  uniqueness proof for (P) (see Section \ref{sec:comparison}) requires that $\mu$ and $\sigma^2$ satisfy (F3) simultaneously with the same choice of $\zeta$.  This is of course always true, since $\sigma$ is a monotone increasing function of $\mu$ in both the exponential and geometric cases. 

Let us briefly comment on the significance of $\Gamma$ and $\Omega$.  The correspondence between exponential DLPP and TASEP (described in detail in  Section \ref{sec:formal-eq}) implies that the initial macroscopic density $\rho_0$ for TASEP is encoded into the curve $\Gamma$.    If $\Gamma$ and $\Omega$ are not present, then we have TASEP with the common step initial condition $\rho_0(s)=1$ for $s \leq 0$ and $\rho_0(s)=0$ for $s>0$.  Suppose now that $\Gamma$ and $\Omega$ are present, and parameterize $\Gamma$ by $t \mapsto (t,f(t))$ where $f$ is continuous and strictly decreasing with $f(0)=1$ and $f(1)=0$.  Let us assume additionally that $f$ is continuously differentiable.  Based on the correspondence between TASEP and exponential DLPP, the initial density will be given by
\[\rho_0(s) = \begin{cases}
    1,& \text{if } s \leq -1 \\
    -f'(t_s)/(1-f'(t_s)),& \text{if } s \in (-1,1) \\
    0,& \text{if } s \geq 1.
\end{cases}\]
where for $s \in (-1,1)$, $t_s$ is the unique $t \in (0,1)$ satisfying $s=t-f(t)$.  Thus by choosing $f$ appropriately, one can obtain a large class of initial densities $\rho_0$ for TASEP with this setup.

The rest of the paper is organized as follows: In Section \ref{sec:formal-eq} we show formally that (P) is equivalent to the conservation law for the hydrodynamic limit of TASEP~\cite{georgiou2010}.  The proof of Theorem \ref{thm:main} is given in Section \ref{sec:conv} after some preliminary results. In particular, in Section \ref{sec:var} we present and analyze a variational problem for (P), and in Section \ref{sec:comparison}, we prove a comparison principle for (P), which generalizes our previous work~\cite{calder2014}.  In Section \ref{sec:num}, we present a fast numerical scheme for computing the viscosity solution of (P), and we present the results of various numerical simulations in Section \ref{sec:sim}. Finally, in Section \ref{sec:max-curve}, we give an algorithm based on dynamic programming for finding the asymptotic shape of optimal DLPP paths, and in Section \ref{sec:discussion} we discuss possible directions for future work.

\subsection{Formal equivalence to hydrodynamic limit of TASEP}
\label{sec:formal-eq}

We show here a formal equivalence between (P) and the hydrodynamic limit of TASEP, given in~\cite{georgiou2010}. TASEP is an interacting stochastic particle system on $\Z$ with state space $\{0,1\}^\Z$, whose elements, $\eta$, represent particle configurations.  If a particle is present at site $j \in \Z$, then  $\eta_j=1$, and if no particle is present, then $\eta_j=0$.  The process is exclusionary in the sense that at most one particle can occupy each site at a given time.  The stochastic dynamics proceed as follows: a particle at site $j$ jumps to site $j+1$ after an exponential waiting time, provided the site $j+1$ is empty.  The exponential waiting times are independent and begin at  the exact moment the right neighboring site is vacated.  These dynamics, along with an initial condition $\eta(0):\Z \to \{0,1\}$, generate the stochastic process $\eta = \{\eta_i(t) \, : \, i \in \Z,\, t \in [0,\infty)\}$.  

In the standard TASEP model, the exponential waiting times are independent with rate $c=1$.  As in~\cite{georgiou2010},  we allow the rates to have a macroscopic spatial (and temporal) dependence, i.e., the rate at position $j\in \Z$ and time $t \in [0,\infty)$ is $c(jN^{-1},tN^{-1})$, where $c: \R\times [0,\infty) \to (0,\infty)$, and $N$ is a parameter that we will send to $\infty$.  A central object of study is the macroscopic density $\rho(s,t)$, which is the almost sure limit (assuming it exists) of the discrete densities as follows:
\begin{equation}\label{eq:density-limit}
\lim_{N \to \infty} \frac{1}{N}\sum_{i=\lfloor Na\rfloor + 1}^{\lfloor Nb\rfloor} \eta_i(Nt) = \int_a^b \rho(s,t) \, ds.
\end{equation}
Georgiou et al.~\cite{georgiou2010} showed that for
\[c(s,t) = c(s) = \begin{cases}
c_1,& s \leq 0\\
c_2,& s > 0,
\end{cases}\]
$\rho$ can be identified as the unique entropy solution of the scalar conservation law
\begin{equation}\label{eq:conservation-law}
\rho_t + (c(s) \rho (1-\rho))_s = 0, \ \ \rho(s,0) = \rho_0(s),
\end{equation}
where $\rho_0$ denotes the initial macroscopic density.  We are using $s$ for the spatial variable in \eqref{eq:conservation-law} to avoid confusion with the spatial variables in (P). In what follows, we show formally that the conservation law \eqref{eq:conservation-law} is equivalent to (P). For simplicity, we will  ignore the initial condition $\rho_0$ and the boundary condition in (P), and restrict ourselves to showing that the (P) and \eqref{eq:conservation-law} are equivalent in the bulk. We shall also assume that $\rho \in C^1$.

Consider now the exponential DLPP model with macroscopic mean $\lambda : [0,\infty)^2 \to (0,\infty)$, i.e.,  $\mu=\sigma=\lambda$. Let $L$ denote the last passage time given by \eqref{eq:dlpp}, and let us write $L(m,n) = L(1,1;m,n)$ for convenience. Let $U$ be the unique monotone viscosity solution of (P), and let us assume that $U \in C^1$ and $\lambda > 0$ so that $U_{x_1},U_{x_2}>\lambda > 0$. Of course, the viscosity solution of a Hamilton-Jacobi equation is in general not $C^1$; the argument we give here is purely formal. By Theorem \ref{thm:main} we have
\begin{equation}\label{eq:dlpp-main}
  \frac{1}{N} L(\lfloor Nx\rfloor) \longrightarrow U(x) \ \ \text{with probability one.}
\end{equation}
We also note that (P) can be rearranged as follows:
\begin{equation}\label{eq:P}
\frac{U_{x_1}(x)U_{x_2}(x)}{U_{x_1}(x)+U_{x_2}(x)} =\lambda(x).
\end{equation}

Let us now describe in detail the correspondence between TASEP and DLPP, which can also be found here~\cite{prahofer2002current,baik2012convergence}.  We assign to a TASEP configuration $\eta$ the site counter
\begin{equation}\label{eq:particle-count}
I_j(t) = \text{number of particles that have jumped from site } j \text{ to site } j+1 \text{ up to time } t.
\end{equation}
and  the height function
\begin{equation}\label{eq:height}
h_j(t) = \begin{cases}
2I_0(t) + \sum_{i=1}^j \big(1-2\eta_{i}(t)\big),& j\geq 1,\\
2I_0(t),& j=0,\\
2I_0(t) + \sum_{i=j+1}^0 \big(1-2\eta_i(t)\big),& j\leq -1.
\end{cases}
\end{equation}
Then we have $h_0(0)=0$, and $h_j(t)-h_j(0) = 2I_j(t)$.  
The height function $h_j(t)$ is a stochastically growing interface, and is related to the corner growth model described in Section \ref{sec:intro_ch4}.  Roughly speaking, the dynamical rule for the growth of $h_j(t)$ is that when a particle jumps to the right (from $j$ to $j+1$), a valley $\diagdown \diagup$ turns into a mountain $\diagup \diagdown$, and the height at site $j$ increases by $2$. See Figure \ref{fig:tasep} for reference.
\begin{figure}[t]
  \centering
  \includegraphics[width=\textwidth]{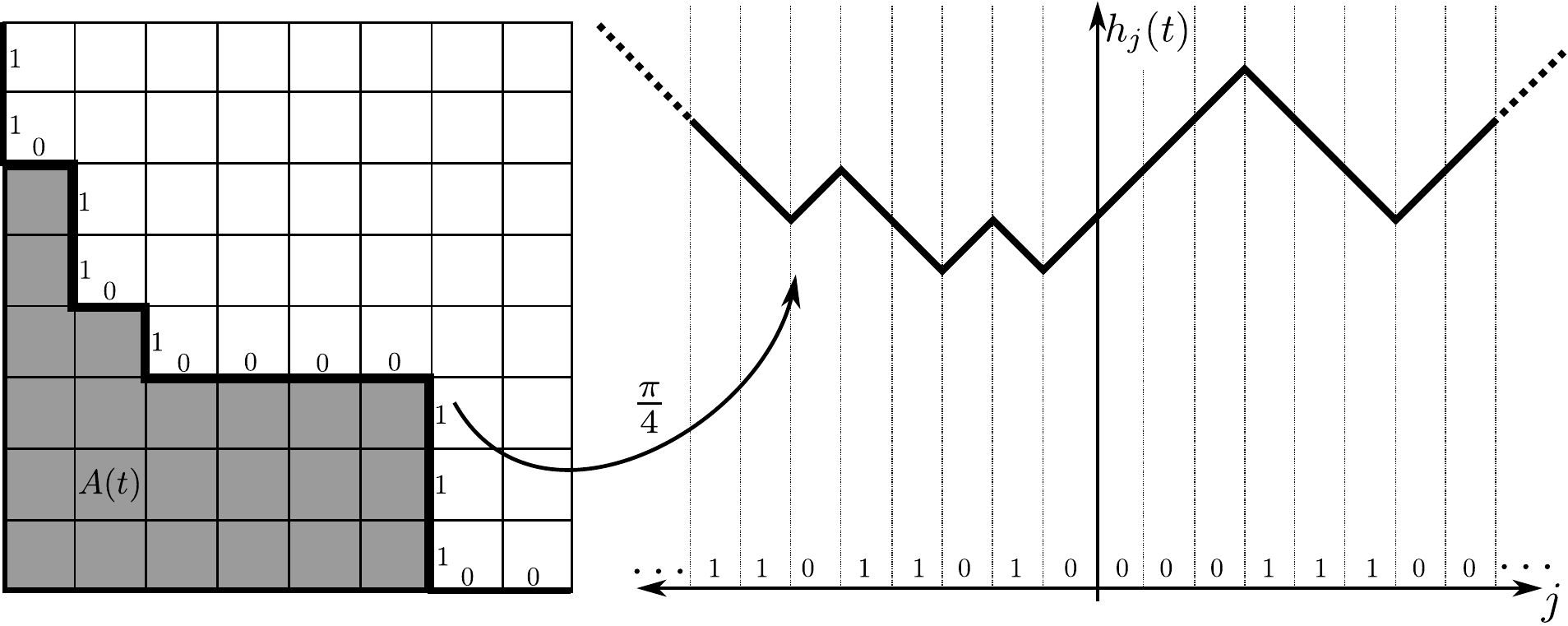}
  \caption{A visual depiction of the correspondence between TASEP and DLPP.  On the left, the gray region is the set $A(t)$---the $t$ sub-level set of $L$---and on the right we show the corresponding TASEP height function $h_j(t)$ obtained by rotating the boundary of $A(t)$ by $\pi/4$.}
  \label{fig:tasep}
\end{figure}

Let us now define the random set
\[A(t) = \Big\{ (m,n) \in \Z^2_+ \, : \, L(m,n) \leq t\Big\}.\]
Since $L$ is non-decreasing in both arguments, it implicitly defines its own height function, $\tilde{h}_j(t)$, which describes the boundary of $A(t)$ as follows:
\[A(t) = \Big\{ (m,n) \in \Z^2_+ \, : \, \tilde{h}_{m-n}(t) \geq m+n\Big\}.\]
The correspondence between TASEP and DLPP is the identification $\tilde{h}_j(t)=h_j(t)$ in the sense of joint distributions.  This connection is made rigorous by choosing appropriate boundary rates for DLPP here~\cite{prahofer2002current}.  Visually, the correspondence is obtained by rotating the boundary of $A(t)$ by $\pi/4$ to obtain the  height function $h_j(t)$ (see Figure \ref{fig:tasep}).

The correspondence between TASEP and DLPP says, at least formally, that 
\begin{equation}\label{eq:sets}
\Big\{ (m,n) \in \Z^2_+ \, : \, L(m,n) \leq t\Big\} = \Big\{ (m,n)\in \Z^2_+ \, : \, h_{m-n}(t) \geq m+n\Big\}.
\end{equation}
By \eqref{eq:density-limit} and \eqref{eq:height},  $h_j(t)$ has a macroscopic continuum limit,  $h^\infty$, such that
\begin{equation}\label{eq:tasep-main}
\frac{1}{N} h_{\lfloor sN\rfloor}(\lfloor tN\rfloor) \longrightarrow h^\infty(s,t) =g(t)  + s - 2\int_0^s \rho(s',t)\, ds',
\end{equation}
with probability one, where $g(t):=\lim_{N\to \infty} 2N^{-1} I_0(tN)$.
It follows from \eqref{eq:tasep-main} that
\begin{equation}\label{eq:tasep-h}
h^\infty_s(s,t) = 1 - 2\rho(s,t).
\end{equation}
Combining \eqref{eq:dlpp-main}, \eqref{eq:sets}, and \eqref{eq:tasep-main} we have that
\begin{equation}\label{eq:sets2}
\Big\{ x \in \R^2_+ \, : \, U(x) = t\Big\} = \Big\{ x \in \R^2_+ \, : \,  h^\infty(x_1-x_2,t) = x_1+x_2\Big\}.
\end{equation}
It follows from \eqref{eq:sets2} that
\begin{equation}\label{eq:main-eq}
h^\infty\big(x_1-x_2,U(x)\big) = x_1+x_2.
\end{equation}
This is in some sense the ``master equation'' relating the continuum limits of TASEP and DLPP.  Let us illustrate how to use \eqref{eq:main-eq} to derive the conservation law \eqref{eq:conservation-law} from (P); deriving (P) from \eqref{eq:conservation-law} follows in a similar fashion.

Differentiating \eqref{eq:main-eq} in both $x_1$ and $x_2$ we have
\begin{align}
h^\infty_s(s,t) + h^\infty_t(s,t) U_{x_1}(x) &= 1\label{eq1}\\
-h^\infty_s(s,t) + h^\infty_t(s,t) U_{x_2}(x) &= 1.\label{eq2}
\end{align}
where $t=U(x)$ and $s=x_1-x_2$.
Adding \eqref{eq1} and \eqref{eq2} we have
\begin{equation}\label{eq:dhdt}
h^\infty_t(s,t) = \frac{2}{U_{x_1}(x) + U_{x_2}(x)}.
\end{equation}
Similarly, by rearranging and dividing \eqref{eq1} by \eqref{eq2} we have
\begin{equation}\label{eq:dhds}
\frac{U_{x_1}(x)}{U_{x_2}(x)} = \frac{1-h^\infty_s(s,t)}{1+h^\infty_s(s,t)} \stackrel{\eqref{eq:tasep-h}}{=}\frac{\rho(s,t)}{1-\rho(s,t)}.
\end{equation}
This equality can also be obtained by noting that the slope of the level set $\{U(x)=t\}$ is given locally by the ratio of ones to zeros in the TASEP configuration.

Solving for $\rho$ in \eqref{eq:dhds} we have $\rho = U_{x_1}/(U_{x_1} + U_{x_2})$, which yields
\begin{equation}\label{eq:flux}
\rho(s,t)\big( 1 - \rho(s,t) \big) = \frac{U_{x_1}(x)U_{x_2}(x)}{(U_{x_1}(x) + U_{x_2}(x))^2} \stackrel{\eqref{eq:P}}{=} \frac{\lambda(x)}{U_{x_1}(x) + U_{x_2}(x)},
\end{equation}
where we invoked the Hamilton-Jacobi equation (P) in the second equality above. Since $U$ is strictly monotone increasing in both $x_1$ and $x_2$, there is a one-to-one correspondence between the coordinates $x=(x_1,x_2)$ and $(s,t)=(x_1-x_2,U(x))$. Let us write $c(s,t):=\lambda(x)^{-1}$.  Since $\lambda$ is the exponential mean, $c$ is the exponential rate for TASEP.  Then 
combining \eqref{eq:flux} with \eqref{eq:dhdt} we have
\begin{equation}\label{eq:dhdt2}
h^\infty_t(s,t) = 2c(s,t)\rho(s,t) \big(1-\rho(s,t) \big).
\end{equation}
Differentiating with respect to $s$ on both sides of \eqref{eq:dhdt2} and applying \eqref{eq:tasep-h} we have
\begin{equation}\label{eq:conservation-law2}
-2\rho_t(s,t) =  2\big( c(s,t) \rho(s,t)(1-\rho(s,t))\big)_s,
\end{equation}
which is precisely the conservation law \eqref{eq:conservation-law}.  Furthermore, by combining \eqref{eq:dhdt2} and \eqref{eq:tasep-h}, we have the following Hamilton-Jacobi equation for $h^\infty$:
\begin{equation}\label{eq:h-HJ}
  h^\infty_t(s,t) =\frac{c(s,t)}{2} \left(1-h_s^\infty(s,t)^2\right).
\end{equation}

It seems to us that this formal computation could be made precise when $\rho$ and $U$ are indeed $C^1$ functions. This is the case, for example, when $\lambda$ is constant. In the general case where $\rho$ and $U$ are not $C^1$, it may be possible to make this formal computation precise using the machinery of viscosity solutions, and we plan to investigate this in a future work.

\section{Variational problem}
\label{sec:var}

In this section we give a variational interpretation for $U$ and analyze its relevant properties. This variational problem first appeared in~\cite{rolla2008}, in a different form, for exponential DLPP with a continuous macroscopic rate $\lambda$, and is similar to the well-known variational problem for the longest chain problem~\cite{deuschel1995,calder2014,calder2013b}.  

Let us first introduce some notation.  We denote by $\leqq$ the coordinatewise partial order on $\R^d$, i.e., $x\leqq y$ if and only if $x_i\leq y_i$ for all $i$, where $x=(x_1,\dots,x_d),y \in \R^d$.  We write $x \leq y$ if $x \leqq y$ and $x\neq y$, and we write $x< y$ if $x_i < y_i$  for all $i$.  For $x,y \in \R^d$ with  $x \leqq y$, we will often use the following interval notation
\[ [x,y] = \big\{ z \in \R^d \, : \, x \leqq z \leqq y\big\},\]
and
\[ (x,y] = \big\{ z\in \R^d \, : \, x < z \leq y \big\},\]
with similar definitions for $[x,y)$ and $(x,y)$.

Let $\A$ denote the set of $C^1$ monotone curves, given by
\begin{equation}\label{eq:A}
\A = \Big\{\gamma \in C^1([0,1]; [0,\infty)^2) \, : \, \gamma'(t) \geqq 0 \ \text{for all } t \in [0,1]\Big\}.
\end{equation}
We write $\gamma(t)=(\gamma_1(t),\gamma_2(t))$ to denote the components of $\gamma$.
For $\mu,\sigma:[0,\infty)^2 \to \R$, let us define $\ell_{\mu,\sigma}:[0,\infty)^2 \times [0,\infty)^2 \to [0,\infty)$ by
\begin{equation}\label{eq:ell}
\ell_{\mu,\sigma}(x,p) = \mu(x)(p_1 + p_2) + 2\sigma(x)\sqrt{p_1p_2},
\end{equation}
and for $\gamma \in \A$ we set
\begin{equation}\label{eq:energy}
J_{\mu,\sigma}(\gamma) = \int_0^1 \ell_{\mu,\sigma}(\gamma(t),\gamma'(t)) \, dt.
\end{equation}
Notice that $\ell_{\mu,\sigma}(x,kp) = k\ell_{\mu,\sigma}(x,p)$ for any $k \geq 0$, hence $J_{\mu,\sigma}(\gamma)$ is independent of the parametrization of $\gamma$.
We finally define
\begin{equation}\label{eq:vardefU}
U_{\mu,\sigma}(x) =\sup \Big\{ J_{\mu,\sigma}(\gamma) \, : \, \gamma \in \A, \ \gamma(0) = 0, \ \text{and} \ \gamma(1) = x\Big\},
\end{equation}
for $x \in [0,\infty)^2$.  Borrowing language from optimal control theory~\cite{bardi1997}, we will call $U_{\mu,\sigma}$ the value function for this variational problem.      We will often write $J, \ell$ and $U$ in place of $J_{\mu,\sigma}, \ell_{\mu,\sigma}$ and $U_{\mu,\sigma}$, respectively, when it is clear from the context what $\mu$ and $\sigma$ are.  Notice that when $x \in \partial \R^2_+$ with $x_2=0$ we have
\begin{equation}\label{eq:Ubc1}
U(x) = \int_0^{x_1} \mu(t,0)\, dt.
\end{equation}
A similar formula holds when $x \in \partial \R^2_+$ with $x_1=0$, and in general we can write
\begin{equation}\label{eq:Ubc2}
U(x) = (x_1+x_2) \int_0^1 \mu(tx) \, dt,
\end{equation}
for $x \in \partial \R^2_+$.

We also define
\begin{equation}\label{eq:vardef}
W_{\mu,\sigma}(x,y) =\sup \Big\{ J_{\mu,\sigma}(\gamma) \, : \, \gamma \in \A, \ \gamma(0) = x, \ \text{and} \ \gamma(1) = y\Big\},
\end{equation}
for $x,y \in [0,\infty)^2$ with $x \leqq y$. As before, we will often drop the subscripts on $W_{\mu,\sigma}$ when convenient.     Similar to \eqref{eq:Ubc1}--\eqref{eq:Ubc2}, when $x,y \in[0,\infty)^2$ with $x \leqq y$ and $x_2=y_2$, we can write
\begin{equation}\label{eq:Wbc1} 
W(x,y) = \int_{x_1}^{y_1} \mu(t,x_2) \, dt,
\end{equation}
with a similar formula holding when $x_1=y_1$.  In general, whenever $x \leqq y$ but $x_i=y_i$ for some $i$ we can write
\begin{equation}\label{eq:Wbc2}
W(x,y) = (y_1-x_1 + y_2-x_2) \int_0^1 \mu(x + (y-x)t) \, dt.
\end{equation}

The remainder of this section is organized as follows.  In Section \ref{sec:reg} we prove that $U$ and $W$ are uniformly continuous, under assumptions on $\mu$ and $\sigma$ that are similar to (F1) and (F3), but slightly weaker.  Then in Section \ref{sec:hjb}, we show that $U_{\mu + \mu_s,\sigma}$ is a viscosity solution of (P), and prove a similar result for $W_{\mu,\sigma}$.   This result, Theorem \ref{thm:hjb} in Section \ref{sec:hjb}, follows from classical optimal control theory~\cite{bardi1997}, and (P) is exactly the Hamilton-Jacobi-Bellman equation for the variational (optimal-control) problem \eqref{eq:vardefU}.  For more information on Hamilton-Jacobi equations and optimal control, we refer the reader to~\cite{bardi1997}. 

\subsection{Regularity}
\label{sec:reg}

H\"older or Lipschitz regularity of the value function in optimal control theory is a standard classical result~\cite{bardi1997}.  However, it is typically assumed that $x \mapsto \ell_{\mu,\sigma}(x,p)$ is uniformly continuous, which is not compatible with (F1).  We show here that the specific form of $\ell_{\mu,\sigma}$ allows us to show that $U_{\mu+\mu_s,\sigma}$ and $W_{\mu,\sigma}$ are uniformly continuous, provided the discontinuities in $\mu$ occur along monotone increasing curves. 

Since it is useful later, we will slightly weaken the hypothesis (F1), and allow $\mu$ to be ``badly behaved'' within a narrow tube of the monotone curves $\Gamma_i$.  This weakened hypothesis is specifically designed so that the regularity result applies to inf- and sup-convolutions of functions satisfying (F1).  Inf- and sup-convolutions are commonly used for regularization in the theory of viscosity solutions~\cite{bardi1997,crandall1992}.

The weakened hypothesis requires the following notation; for $\theta \geq0$ define
\begin{align}\label{eq:eta-reg}
\Gamma_{i,\theta} &{}={}\Big\{x \in [0,\infty)^2 \, : \, \dist(x,\Gamma_i) \leq \theta\Big\},\\
 \Omega_{i,\theta} &{}={}\Big\{ x \in \Omega_i \, : \, \dist(x,\Gamma_i) > \theta \ \text{ and } \ \dist(x,\Gamma_{i+1}) > \theta\Big\},\\
\Gamma_{\theta} &{}={}\Big\{x \in [0,\infty)^2 \, : \, \dist(x,\Gamma) \leq \theta\Big\},\\
 \Omega_{\theta} &{}={}\Big\{ x \in \Omega \, : \, \dist(x,\Gamma) > \theta \Big\}.
\end{align}
The weakened version of (F1) is the following:
\begin{itemize}
\item[(F1*)] The function $\mu:[0,\infty)^2\to [0,\infty)$ is bounded and upper semicontinuous, $\mu\vert_{\Omega_\theta}  = 0$, and there exists a constant $C_{lip}$ such that for every $i \in \Z$, $\mu\vert_{\Omega_{i,\theta}}$ is Lipschitz continuous with constant $C_{lip}$.
\end{itemize}
    
We now give the regularity result for $W$.
\begin{theorem}\label{thm:reg}
  Suppose that $\mu$ satisfies (F1*) for $\theta \geq 0$, and suppose that $\sigma:[0,\infty)^2 \to [0,\infty)$ is bounded and Borel-measurable. Then for every $R>0$ there exist a modulus of continuity $\omega$, and a constant $C=C(C_{lip},\|\mu\|_{\infty}, \|\sigma\|_{\infty},R)>0$ such that
\begin{equation}\label{eq:approx-holder}
  |W_{\mu,\sigma}(z,x) - W_{\mu,\sigma}(z,y)| \leq C\left(\sqrt{|x-y|} + \omega(|x-y|) + \omega(\theta)\right),
\end{equation}
for all $x,y,z \in [0,R]^2$ with $x,y \geqq z$.  Furthermore, $\omega$ depends only on $\Gamma,\{\Gamma_i\}_{i \in \Z}$ and $R>0$.
\end{theorem}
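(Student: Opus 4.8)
The plan is to reduce \eqref{eq:approx-holder} to the case of moving the second endpoint in a single coordinate, and then to treat that case by a ``truncate-and-reattach'' surgery on a near-optimal path. Since $\mu,\sigma\ge 0$ we have $\ell_{\mu,\sigma}\ge 0$, so concatenating an optimal path with a monotone straight segment shows that $W_{\mu,\sigma}(z,\cdot)$ is nondecreasing for the order $\leqq$. Given $x,y\geqq z$ in $[0,R]^2$, set $m=(\min(x_1,y_1),\min(x_2,y_2))$; then $z\leqq m\leqq x$, $z\leqq m\leqq y$ and $|x-m|,|y-m|\le|x-y|$, and moving from $m$ to $x$ (resp.\ $y$) one coordinate at a time writes $|W(z,x)-W(z,y)|=\big(W(z,x)-W(z,m)\big)+\big(W(z,y)-W(z,m)\big)$ as a sum of four differences of the form $W(z,w+he_k)-W(z,w)\ge 0$ with $z\leqq w$, $w,w+he_k\in[0,R]^2$, $0<h\le|x-y|$ and $k\in\{1,2\}$. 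So it suffices to prove, uniformly in such $z,w$,
\[
0\ \le\ W(z,w+he_1)-W(z,w)\ \le\ C\big(\sqrt h+\omega(h)+\omega(\theta)\big),
\]
with $C,\omega$ of the stated dependence; the case $k=2$ is symmetric.

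For the upper bound, fix $\eps>0$ and a near-optimal $\gamma\in\A$ from $z$ to $w+he_1$, $J(\gamma)\ge W(z,w+he_1)-\eps$; note $\gamma([0,1])\subseteq[z,w+he_1]\subseteq[0,R]^2$. Let $\tau'=\inf\{t:\gamma_1(t)=w_1\}$, $\tau=\sup\{t:\gamma_1(t)=w_1\}$, and $a=\gamma_2(\tau)\le w_2$. The curve obtained by following $\gamma$ on $[0,\tau']$ and then appending the vertical segment from $(w_1,\gamma_2(\tau'))$ up to $(w_1,w_2)$ runs from $z$ to $w$ and is monotone; smoothing its single corner changes its energy by as little as we like (because $\ell_{\mu,\sigma}(x,p)\le C|p|$ and the curve length near the corner is negligible), so
\[
W(z,w)\ \ge\ J\big(\gamma|_{[0,\tau']}\big)+\int_{\gamma_2(\tau')}^{w_2}\mu(w_1,s)\,ds-\eps .
\]
Subtracting this from $J(\gamma)=J(\gamma|_{[0,\tau']})+\int_{\gamma_2(\tau')}^{a}\mu(w_1,s)\,ds+J(\gamma|_{[\tau,1]})$ yields
\[
W(z,w+he_1)-W(z,w)\ \le\ 2\eps+J\big(\gamma|_{[\tau,1]}\big)-\int_a^{w_2}\mu(w_1,s)\,ds .
\]
On $[\tau,1]$ the first coordinate of $\gamma$ increases from $w_1$ to $w_1+h$, so $\int_\tau^1\mu(\gamma)\gamma_1'\le\|\mu\|_\infty h$ and, by Cauchy--Schwarz, $\int_\tau^1 2\sigma(\gamma)\sqrt{\gamma_1'\gamma_2'}\le 2\|\sigma\|_\infty\big(\!\int_\tau^1\gamma_1'\big)^{1/2}\big(\!\int_\tau^1\gamma_2'\big)^{1/2}\le 2\|\sigma\|_\infty\sqrt{hR}$; this is precisely why $\sigma$ needs no regularity. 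For the last piece, the monotone change of variables $s=\gamma_2(t)$ gives $\int_\tau^1\mu(\gamma)\gamma_2'\,dt=\int_a^{w_2}\mu(\gamma_1(t_s),s)\,ds$ with each $\gamma_1(t_s)\in[w_1,w_1+h]$, hence
\[
\int_\tau^1\mu(\gamma)\gamma_2'\,dt-\int_a^{w_2}\mu(w_1,s)\,ds\ \le\ C_{lip}\,hR+2\|\mu\|_\infty\,\big|\{s\in[0,R]:\dist((w_1,s),\mathcal C)\le\theta+h\}\big|,
\]
where $\mathcal C:=\Gamma\cup\bigcup_{i}\Gamma_i$: indeed, when $\dist((w_1,s),\mathcal C)>\theta+h$ the horizontal segment from $(w_1,s)$ to $(\gamma_1(t_s),s)$ stays outside the $\theta$-neighbourhood of $\mathcal C$, hence inside a single $\Omega_{i,\theta}$ or inside $\Omega_\theta$, where $\mu$ is $C_{lip}$-Lipschitz or vanishes by (F1*); on the complementary set of $s$ we only use $|\mu|\le\|\mu\|_\infty$.

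The one genuinely geometric step is to show that a thin vertical slice of an $r$-neighbourhood of $\mathcal C$ inside $[0,R]^2$ has length at most $\omega(r)$ for a modulus $\omega$ depending only on $\Gamma,\{\Gamma_i\}_{i\in\Z},R$: by local finiteness only finitely many $\Gamma_i$ meet $[0,R]^2$, and each such curve (and $\Gamma$), being the graph of a continuous strictly monotone function $x_1=g_i(x_2)$, meets the strip $\{|x_1-w_1|\le r\}$ in a single $x_2$-interval, of length at most the modulus of uniform continuity $\omega_i$ of $g_i^{-1}$ on the relevant compact range; taking $\omega(r)=\sum_i(\omega_i(2r)+2r)$ works. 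Plugging $r=\theta+h$ and using $\omega(\theta+h)\le\omega(2\theta)+\omega(2h)$, then letting $\eps\downarrow 0$, gives the one-coordinate bound, and hence \eqref{eq:approx-holder}, after replacing $\omega$ by $t\mapsto\omega(2t)$ and absorbing $\|\mu\|_\infty,\|\sigma\|_\infty,C_{lip}$ and $R$ into $C$. The monotonicity of the discontinuity curves is used only at this last step---so that each curve crosses a narrow strip in a single short interval---and I expect this geometric lemma, together with the bookkeeping of the $\theta$-tubes near the endpoints of the $\Gamma_i$ and near the axes, to be the one delicate point; the reduction, the surgery, and the Cauchy--Schwarz control of the $\sigma$-term are all soft.
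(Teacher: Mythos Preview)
Your proof is correct and follows essentially the same strategy as the paper's: take a near-optimal curve to the larger endpoint, truncate it to build a competitor to the smaller endpoint, control the $\sigma$-contribution by Cauchy--Schwarz (which is why no regularity on $\sigma$ is needed), and split the $\mu$-contribution into a Lipschitz part away from the discontinuity curves and a small part near them, using strict monotonicity of each $\Gamma_i$ to bound the measure of the bad set. The only cosmetic differences are that you reduce first to single-coordinate increments $w\mapsto w+he_k$ and build the competitor by appending a vertical segment, whereas the paper reduces to $x\leqq y$ and builds the competitor $\bar\gamma(t)=(\min(x_1,\gamma_1(t)),\gamma_2(t))$ directly; your geometric lemma (thin vertical slices of $r$-tubes around strictly monotone graphs have length $\le\omega(r)$) is exactly the paper's estimate $m_i^+-m_i^-\le\omega(|x-y|)$ rephrased.
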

\begin{proof}
  Let $R>0$.  We will prove the result for $z = 0$; the case of $z \neq 0$ is very similar.  For simplicity of notation, let us set $V(x) = W_{\mu,\sigma}(0,x)$. Notice that we can reduce the proof to the case where $x,y \in [0,R]^2$ with $x \leqq y$.  Indeed, let  $x,y \in [0,R]^2$ and set 
 $x' = (\min(x_1,y_1),\min(x_2,y_2))$.  Then we have 
 \[|U(x)-U(y)| \leq |U(x)-U(x')| + |U(y) - U(x')|,\]
 and $x' \leqq x$ and $x' \leqq y$.

Thus let us assume that $x \leqq y$. Let $\eps > 0$ and let $\gamma \in \A$ such that $\gamma(0)=0$, $\gamma(1)=y$, and  $V(y) \leq J(\gamma) + \eps$.   Define
\[s_1 = \sup \big\{t>0 \, : \, \gamma(t) \leqq x\big\} \ \ \text{and} \ \ s_2 = \inf \big\{t>0 \, : \, \gamma(t) \geqq x\big\}.\]
Without loss of generality, we may assume that $\gamma_2(s_2) = x_2$.  Define 
\[\bar{\gamma}(t) = \Big(\min\big(x_1,\gamma_1(t)\big),\gamma_2(t)\Big) \ \ \text{ for } t \in [0,s_2].\]

The proof is split into two steps now.

\vspace{10pt}

1.  We claim that
\begin{equation}\label{eq:final}
|V(x) - V(y)| \leq  \int_{s_1}^{s_2} \left|\mu(\gamma(t)) - \mu(\bar{\gamma}(t))\right| \gamma_2'(t) \, dt + C\sqrt{|x-y|} + \eps.
\end{equation}
where $C=C(\|\mu\|_{\infty},\|\sigma\|_{\infty},R)$.

To see this: First note that  $\gamma(s_2) \geqq x$ and $\gamma(1) = y$.  It follows that
\begin{align}\label{eq:s2-est}
\int_{s_2}^1 \ell(\gamma(t),\gamma'(t)) \, dt  &\leq \|\mu\|_{\infty} \int_{s_2}^1 \gamma_1'(t) + \gamma_2'(t) \, dt + 2\|\sigma\|_{\infty} \int_{s_2}^1 \sqrt{\gamma'_1(t)\gamma'_2(t)} \, dt \notag \\
&\leq 2\|\mu\|_{\infty}|x-y| + 2\|\sigma\|_{\infty}\left( \int_{s_2}^1 \gamma_1'(t) \, dt \int_{s_2}^1 \gamma_2'(t) \, dt \right)^\frac{1}{2} \notag \\
&\leq 2\|\mu\|_{\infty}|x-y| + 2\|\sigma\|_{\infty} |x-y| \notag \\
&= 2(\|\mu\|_{\infty} + \|\sigma\|_{\infty}) |x-y|,
\end{align}
where the second line follows from H\"older's inequality.  
We claim now that $\gamma_1(s_1) = x_1$.  To see this: suppose to the contrary that $\gamma_1(s_1) < x_1$, which implies that $s_1<s_2$.   By the definition of $s_1$ we must have $\gamma_2(s_1)=x_2$ and $\gamma_2(s) > x_2$ for $s > s_1$.  This contradicts our assumption that $\gamma_2(s_2)=x_2$.   Hence $\gamma_1(s_1)=x_1$.

Now we have
\begin{equation}\label{eq:s1s2}
\int_{s_1}^{s_2} \gamma_1'(t) \,dt = \gamma_1(s_2) - \gamma_1(s_1) \leq y_1 - x_1 \leq |x-y|.
\end{equation}
Since $\gamma=\bar{\gamma}$ on $[0,s_1]$ and  $\bar{\gamma}(s_2) = x$ we have
\begin{align}\label{eq:temp1}
V(y) - V(x) &\leq J(\gamma) + \eps - \int_0^{s_2}\ell(\bar{\gamma}(t),\bar{\gamma}'(t)) \, dt \notag \\
&= \int_{s_1}^{s_2} \ell(\gamma(t),\gamma'(t))  - \ell(\bar{\gamma}(t), \bar{\gamma}'(t)) \,dt + \int_{s_2}^1 \ell(\gamma(t),\gamma'(t)) \,dt + \eps \notag\\
&\hspace{-1mm}\stackrel{\eqref{eq:s2-est}}{\leq} \underbrace{\int_{s_1}^{s_2} \ell(\gamma(t),\gamma'(t))  - \ell(\bar{\gamma}(t), \bar{\gamma}'(t)) \,dt}_A  + C |x-y| + \eps,
\end{align}
where $C=C(\|\mu\|_{\infty},\|\sigma\|_{\infty})$. 
If $s_1=s_2$ then the claim \eqref{eq:final} follows from \eqref{eq:temp1}.  So suppose that $s_1<s_2$.  
Since $\bar{\gamma}_1'(t) = 0$ and $\bar{\gamma}_2'(t) = \gamma_2'(t)$ for $t \in (s_1,s_2)$, we have
\begin{align}\label{eq:Atemp}
A &= \int_{s_1}^{s_2} \left(\mu(\gamma(t)) - \mu(\bar{\gamma}(t))\right) \gamma_2'(t) +  \mu(\gamma(t))\gamma_1'(t) + 2\sigma(\gamma(t))\sqrt{\gamma_1'(t)\gamma_2'(t)} \, dt\notag \\
&\leq \int_{s_1}^{s_2} \left|\mu(\gamma(t)) - \mu(\bar{\gamma}(t))\right| \gamma_2'(t) \, dt + \|\mu\|_{\infty}\int_{s_1}^{s_2} \gamma_1'(t) \, dt \notag \\
&\hspace{5.5cm}+ 2\|\sigma\|_{\infty} \left( \int_{s_1}^{s_2} \gamma_1'(t) \, dt \int_{s_1}^{s_2} \gamma_2'(t) \, dt\right)^\frac{1}{2}\notag \\
&\hspace{-1mm}\stackrel{\eqref{eq:s1s2}}{\leq} \int_{s_1}^{s_2} \left|\mu(\gamma(t)) - \mu(\bar{\gamma}(t))\right| \gamma_2'(t) \, dt +  C(\|\mu\|_\infty,\|\sigma\|_\infty,R)\sqrt{|x-y|},
\end{align}
which establishes \eqref{eq:final}.

\vspace{10pt}

2.  We claim that
\begin{equation}\label{eq:Bclaim}
\int_{s_1}^{s_2} \left|\mu(\gamma(t)) - \mu(\bar{\gamma}(t))\right| \gamma_2'(t) \, dt \leq C\left(\sqrt{|x-y|} + \omega(|x-y|) + \omega(\theta) + \theta\right),
\end{equation}
where $C=C(C_{lip},R,\|\mu\|_\infty,\|\sigma\|_\infty)$.  Notice that once \eqref{eq:Bclaim} is established, the proof is completed by combining \eqref{eq:Bclaim} with \eqref{eq:final} and sending $\eps \to 0$.

Since the collection of curves $\{\Gamma_i\}_{i=-\infty}^\infty$ is locally finite, we may assume that $\Gamma_{1,\theta},\dots,\Gamma_{M,\theta}$ are the only tubular neighborhoods that have a non-empty intersection with $[0,R]^2$. Since $\Gamma_i$ is continuous and strictly increasing, we can parameterize the portion of $\Gamma_i$ that intersects $[0,R]^2$ as follows:
\[\Gamma_i: t \mapsto (t,f_i(t)), \ \ t \in I_i,\]
where $f_i:I_i \to [0,\infty)$ is continuous and strictly increasing, and $I_i$ is a closed interval in $[0,R]$.  Similarly we can parameterize $\Gamma$ as
\[\Gamma: t \mapsto (t,f(t)), \ \ t \in [0,1],\]
where $f:[0,1]\to[0,1]$ is continuous and strictly decreasing.  Note that the  functions $f_1,\dots,f_M,f$ share a common modulus of continuity $\omega$, by virtue of their compact domains.  We also note that $\omega$ and $M$ depend only on $\Gamma, \{\Gamma_i\}_{i \in \Z}$, and $R>0$.

\vspace{10pt} 

To prove \eqref{eq:Bclaim}, first set $c=\omega(\theta) + \theta$.  A simple computation shows that
\begin{equation}\label{eq:dist}
\dist((t,f_i(t) + c),\Gamma_i) > \theta \ \ \text{ and } \ \ \dist((t,f_i(t) - c),\Gamma_i)>\theta,
\end{equation}
for any $t\in I_i$.  A similar statement holds for $\Gamma$ and $f$.  For each $i\in\{1,\dots,M\}$, we define
\[m_i^+ = \sup_{I_i\cap[x_1,y_1]} f_i, \ \ \text{and} \ \ m_i^- = \inf_{I_i\cap[x_1,y_1]} f_i,\]
and
\begin{equation} \label{eq:Ki}
K_i = \Big\{t \in (s_1,s_2) \, : \, (x_1,m_i^--c) \leqq \gamma(t) \leqq (y_1,m_i^++c)\Big\}.
\end{equation}
Similarly we set 
\[m^+ = \sup_{[0,1]\cap[x_1,y_1]} f, \ \ \text{and} \ \ m^- = \inf_{[0,1]\cap[x_1,y_1]} f,\]
\begin{equation} \label{eq:K}
K = \Big\{t \in (s_1,s_2) \, : \, (x_1,m^--c) \leqq \gamma(t) \leqq (y_1,m^++c) \Big\}.
\end{equation}
and
\begin{equation}\label{eq:H}
\H =(s_1,s_2) \setminus ( K \cup K_1 \cup \cdots \cup K_M)
\end{equation}

By the definition of $m_i^\pm$ and $m^\pm$ we have
\begin{equation}\label{eq:lambda-cont}
m_i^+ - m_i^- \leq \omega(y_1-x_1) \ \ \text{and} \ \ m^+ - m^- \leq \omega(y_1-x_1).
\end{equation}
It follows from \eqref{eq:dist}--\eqref{eq:H}, (F1*), and the fact that $\gamma$ is monotone, that whenever $t \in \H$ we have either $\mu(\gamma(t))=\mu(\bar{\gamma}(t)) = 0$ or 
 \[\mu(\gamma(t)) - \mu(\bar{\gamma}(t)) = \mu_{i,\theta}(\gamma(t)) - \mu_{i,\theta}(\bar{\gamma}(t)),\]
 for some $i \in \{0,\dots,M\}$.  
Thus, invoking (F1*), we have
\begin{equation}\label{eq:good}
|\mu(\gamma(t)) - \mu(\bar{\gamma}(t))| \leq C_{lip} |\gamma(t) - \bar{\gamma}(t)| \leq C_{lip}|x-y|,
\end{equation} 
for all $t \in \H$.

For any $i\in \{1,\dots,M\}$, we have
\begin{align}\label{eq:bad}
\int_{K_i}|\mu(\gamma(t)) - \mu(\bar{\gamma}(t))| \gamma_2'(t) \, dt &\leq 2\|\mu\|_{\infty} \int_{K_i} \gamma_2'(t) \, dt \notag \\
&\leq 2\|\mu\|_{\infty} |m_i^+ - m_i^- + 2c| \notag \\
&\hspace{-1mm}\stackrel{\eqref{eq:lambda-cont}}{\leq} 2\|\mu\|_{\infty} \omega(|x-y|) + 4\|\mu\|_\infty(\omega(\theta) + \theta).
\end{align}
We have an identical estimate when $K_i$ is replaced by $K$. 
Combining \eqref{eq:good} with \eqref{eq:bad} we have
\begin{align}\label{eq:done}
\int_{s_1}^{s_2} &\left|\mu(\gamma(t)) - \mu(\bar{\gamma}(t))\right| \gamma_2'(t) \, dt \notag \\
&= \int_{\H} \left|\mu(\gamma(t)) - \mu(\bar{\gamma}(t))\right| \gamma_2'(t) \, dt + \int_{K\cup K_1 \cup \dots \cup K_M}\left|\mu(\gamma(t)) - \mu(\bar{\gamma}(t))\right| \gamma_2'(t) \, dt\notag\\
&\leq C_{lip} |x-y| \int_{s_1}^{s_2} \gamma_2'(t) \, dt + \sum_{i=1}^M \int_{K_i}\left|\mu(\gamma(t)) - \mu(\bar{\gamma}(t))\right| \gamma_2'(t) \, dt \notag \\
&{} \hspace{0.45\linewidth}+ \int_{K}\left|\mu(\gamma(t)) - \mu(\bar{\gamma}(t))\right| \gamma_2'(t) \, dt  \notag \\
&\leq C_{lip}R|x-y| + 2(M+1)\|\mu\|_{\infty} \omega(|x-y|) + 4(M+1)\|\mu\|_\infty (\omega(\theta) + \theta),
\end{align}
which establishes \eqref{eq:Bclaim} and completes the proof.
\end{proof}
\begin{corollary}\label{cor:reg}
Suppose that $\mu$ satisfies (F1*) for $\theta \geq 0$, and suppose that $\sigma$ is bounded and Borel-measurable. Then for every $R>0$ there exist a modulus  of continuity $\omega$, and a constant $C=C(C_{lip},\|\mu\|_{\infty}, \|\sigma\|_{\infty},R)>0$ such that
\begin{equation}\label{eq:approx-holder2}
  |W_{\mu,\sigma}(x,z) - W_{\mu,\sigma}(y,z)| \leq C\left(\sqrt{|x-y|} + \omega(|x-y|) + \omega(\theta)\right),
\end{equation}
for all $x,y,z \in [0,R]^2$ with $x,y \leqq z$.  As in Theorem \ref{thm:reg}, $\omega$ depends only on $\Gamma,\{\Gamma_i\}_{i \in \Z}$ and $R>0$.
\end{corollary}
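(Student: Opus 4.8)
The plan is to re-run the proof of Theorem~\ref{thm:reg} with the two endpoints of the competing curves interchanged: rather than perturbing the \emph{terminal} segment of a near-optimal curve and projecting its first coordinate \emph{down} to $x_1$, I would perturb the \emph{initial} segment and project the first coordinate \emph{up} to $y_1$. Morally this is the point reflection $x\mapsto(R,R)-x$, which reverses the coordinatewise order, sends a monotone curve from $x$ to $z$ to a monotone curve from $(R,R)-z$ to $(R,R)-x$, carries $\ell$ into $\ell_{\tilde\mu,\tilde\sigma}$ with $\tilde\mu(\cdot)=\mu((R,R)-\cdot)$, and preserves the structure behind (F1*) since it maps strictly increasing curves to strictly increasing curves. (The reflection does not keep the distinguished curve $\Gamma$ attached to the axes, so rather than invoke Theorem~\ref{thm:reg} verbatim I would re-run its proof; but $\Gamma$ enters that proof in exactly the same way as the curves $\Gamma_i$, so this costs nothing.) I would first reduce to ordered arguments: fix $z\in[0,R]^2$, set $V(x)=W_{\mu,\sigma}(x,z)$ on $\{x\leqq z\}$, and note that since $\ell\ge0$ one has $V(x)\ge V(y)$ whenever $x\leqq y\leqq z$ (prepend any monotone path from $x$ to $y$). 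For general $x,y\in[0,R]^2$ with $x,y\leqq z$, put $w=(\max(x_1,y_1),\max(x_2,y_2))$; then $x\leqq w\leqq z$, $y\leqq w\leqq z$, and $|x-w|,|y-w|\le|x-y|$, so it suffices to estimate $|V(a)-V(b)|$ for $a\leqq b\leqq z$ with $|a-b|\le|x-y|$.

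So assume $x\leqq y\leqq z$, fix $\eps>0$, and take $\gamma\in\A$ with $\gamma(0)=x$, $\gamma(1)=z$ and $V(x)\le J(\gamma)+\eps$. Let $s_1=\sup\{t:\gamma(t)\leqq y\}$ and $s_2=\inf\{t:\gamma(t)\geqq y\}$; after relabeling coordinates assume $\gamma_2(s_1)=y_2$, and then, by the argument that yields $\gamma_1(s_1)=x_1$ in Theorem~\ref{thm:reg} with all inequalities reversed, obtain $\gamma_1(s_2)=y_1$. Set
\[
\bar\gamma(t)=\big(\max(y_1,\gamma_1(t)),\,\gamma_2(t)\big),\qquad t\in[s_1,1],
\]
which is monotone (subject to the same smoothing caveat used in Theorem~\ref{thm:reg}), satisfies $\bar\gamma(s_1)=y$ and $\bar\gamma(1)=z$, coincides with $\gamma$ on $[s_2,1]$ because $\gamma_1\ge y_1$ there, and on $(s_1,s_2)$ has $\bar\gamma_1'=0$ and $\bar\gamma_2'=\gamma_2'$ because $\gamma_1\le y_1$ there. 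Since $\bar\gamma$ is admissible from $y$ to $z$,
\[
V(x)-V(y)\le J(\gamma)-J(\bar\gamma)+\eps=\int_0^{s_1}\ell(\gamma(t),\gamma'(t))\,dt+\int_{s_1}^{s_2}\big(\ell(\gamma(t),\gamma'(t))-\ell(\bar\gamma(t),\bar\gamma'(t))\big)\,dt+\eps.
\]
The first integral is the $\ell$-cost of $\gamma$ between $x$ and $\gamma(s_1)\leqq y$, hence is $\le C|x-y|$ by H\"older's inequality exactly as in \eqref{eq:s2-est}; the second integral is the quantity $A$ from the proof of Theorem~\ref{thm:reg}, and the same computation (using $\int_{s_1}^{s_2}\gamma_1'=y_1-\gamma_1(s_1)\le|x-y|$ and $\int_{s_1}^{s_2}\gamma_2'=\gamma_2(s_2)-y_2\le R$) bounds it by $\int_{s_1}^{s_2}|\mu(\gamma(t))-\mu(\bar\gamma(t))|\,\gamma_2'(t)\,dt+C\sqrt{|x-y|}$. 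This is precisely the analogue of \eqref{eq:final}, and Step~2 of the proof of Theorem~\ref{thm:reg}---which uses only that $\bar\gamma$ has constant first coordinate and second coordinate $\gamma_2$ on $(s_1,s_2)$, that $\gamma$ is monotone, and (F1*)---carries over word for word to give \eqref{eq:Bclaim}. Sending $\eps\to0$ and taking the modulus $\omega$ to dominate the identity, as in Theorem~\ref{thm:reg}, yields \eqref{eq:approx-holder2}.

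I do not expect a genuine obstacle here: the mathematical content lives entirely in Theorem~\ref{thm:reg}, and the only things requiring care are the degenerate configurations ($s_1=s_2$, or $x,y,z$ sharing a coordinate, or competitors meeting $\partial[0,\infty)^2$) and the non-$C^1$ corner of $\bar\gamma$ at $s_1$---each handled exactly as there. The bulk of the work is the bookkeeping of checking that the initial-segment surgery mirrors, line by line, the terminal-segment surgery and estimates of that proof.
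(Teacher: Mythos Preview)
Your proposal is correct and is essentially the same approach as the paper's: the paper's proof is the single sentence ``The proof follows from Theorem~\ref{thm:reg} by symmetry,'' and what you have written is a careful unpacking of exactly that symmetry (the point reflection $x\mapsto(R,R)-x$, or equivalently the initial-segment surgery mirroring the terminal-segment surgery). Your handling of the reduction to ordered arguments, the WLOG on $\gamma_2(s_1)=y_2$, the verification that $\gamma_1(s_2)=y_1$, and the observation that $\Gamma$ plays the same role as the $\Gamma_i$ in Step~2 are all correct and are precisely the bookkeeping that ``by symmetry'' is meant to encode.
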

\begin{proof}
  The proof follows from Theorem \ref{thm:reg} by symmetry.
\end{proof}
\begin{remark}\label{rem:reg}
Notice in Theorem \ref{thm:reg} that if $\theta = 0$ then we have the estimate
\begin{equation}\label{eq:estimate}
  |W_{\mu,\sigma}(z,x)-W_{\mu,\sigma}(z,y)| \leq C\left(\sqrt{|x-y|} + \omega(|x-y|)\right),
\end{equation}
for all $x,y,z \in [0,R]^2$ with $x,y\geq z$.  Inspecting the proof of Theorem \ref{thm:reg}, we see that $\omega$ is the modulus of continuity of the curves $\Gamma, \{\Gamma_i\}_{i \in \Z}$ as functions over both coordinate axes. Thus, the regularity of $W$ is inherited from the regularity of the curves $\Gamma, \{\Gamma_i\}_{i \in \Z}$.  For example, if the curves $\Gamma, \{\Gamma_i\}_{i \in \Z}$ are H\"older-continuous with exponent $\alpha\leq 1/2$ as functions over both coordinate axes, then we have that $W(z,\cdot) \in C^{0,\alpha}([z_1,R]\times[z_2,R])$ for every $R>0$ and its H\"older seminorm depends only on $\|\mu\|_\infty$, $\|\sigma\|_\infty$, $R$, and $C_{lip}$.  The same remark holds for Corollary \ref{cor:reg} and \eqref{eq:approx-holder2}.
\end{remark}

We now plan to use Theorem \ref{thm:reg} to prove a similar regularity result for $U_{\mu+\mu_s,\sigma}$.  To do this, we relate $W$ and $U$ via the following dynamic programming principle:
\begin{proposition}\label{prop:dpp}
Suppose that $\mu$ satisfies (F1*) for $\theta\geq0$, $\mu_s$ satisfies (F2),  and suppose that $\sigma$ is bounded and Borel-measurable.  Then for any $y \in [0,\infty)^2$ we have
\begin{equation}\label{eq:dpp-bc}
  U_{\mu+\mu_s,\sigma}(y) = \max_{x \in \partial \R^2_+ \, : \, x \leqq y }  \Big\{ U_{\mu+\mu_s,\sigma}(x) + W_{\mu,\sigma}(x,y) \Big\}.
\end{equation}
\end{proposition}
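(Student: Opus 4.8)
The plan is to prove the two inequalities in \eqref{eq:dpp-bc} separately, exploiting that the source $\mu_s$ vanishes on the open quadrant $\R^2_+$, that $J_{\mu+\mu_s,\sigma}$ is additive under concatenation and invariant under reparametrization of curves, and that $\mu$ and $\sigma$ are bounded. Write $\bar U = U_{\mu+\mu_s,\sigma}$ and $W = W_{\mu,\sigma}$, and fix $y\in[0,\infty)^2$. For the ``$\le$'' inequality I would take a near-optimal admissible curve $\gamma$ from $0$ to $y$, let $s^* = \sup\{t\in[0,1] : \gamma(t)\in\partial\R^2_+\}$ be the last time it meets the boundary, and set $x := \gamma(s^*)\in\partial\R^2_+$ (a closed set), noting $x\leqq y$ by monotonicity of $\gamma$. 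Splitting the energy integral at $s^*$, the part over $[0,s^*]$ is, after affine reparametrization onto $[0,1]$, the energy $J_{\mu+\mu_s,\sigma}$ of a curve in $\A$ from $0$ to $x$, hence is $\le\bar U(x)$; and since $\gamma(t)\in\R^2_+$ for $t>s^*$, the term $\mu_s(\gamma(t))$ drops out a.e.\ on $[s^*,1]$ (the point $t=s^*$ has measure zero and the integrand is bounded), so that part equals $\int_{s^*}^1\ell_{\mu,\sigma}(\gamma,\gamma')\,dt \le W(x,y)$. Combining, $\bar U(y)-\eps \le \bar U(x)+W(x,y)$, and letting $\eps\to0$ gives the ``$\le$'' direction. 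The degenerate cases $s^*\in\{0,1\}$ — the latter occurring precisely when $y\in\partial\R^2_+$, forcing $x=y$ and $W(x,y)=0$ — are immediate.

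For the ``$\ge$'' inequality I would fix $x\in\partial\R^2_+$ with $x\leqq y$, pick $\alpha\in\A$ from $0$ to $x$ nearly realizing $\bar U(x)$ and $\beta\in\A$ from $x$ to $y$ nearly realizing $W(x,y)$, observe $J_{\mu+\mu_s,\sigma}(\beta)\ge J_{\mu,\sigma}(\beta)$ because $\mu_s\ge0$, and concatenate $\alpha$ and $\beta$ (reparametrized onto $[0,\tfrac12]$ and $[\tfrac12,1]$) into a monotone, piecewise-$C^1$ curve $\gamma_0$ from $0$ to $y$ with $J_{\mu+\mu_s,\sigma}(\gamma_0) \ge \bar U(x)+W(x,y)-2\eps$. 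It then remains to turn $\gamma_0$ into an element of $\A$. Since its only non-smooth point is $t=\tfrac12$, I would, for small $h>0$, replace $\gamma_0$ on $[\tfrac12-h,\tfrac12+h]$ by a $C^1$ monotone arc matching the values and one-sided derivatives of $\gamma_0$ at the endpoints $\tfrac12\pm h$ and staying in $[0,\infty)^2$ (a routine interpolation; monotonicity survives because the two endpoint tangents and the net displacement $\gamma_0(\tfrac12+h)-\gamma_0(\tfrac12-h)$ all lie in the nonnegative cone). Call the result $\gamma_h\in\A$. Because $\mu,\sigma$ are bounded and the derivatives of $\gamma_0$ and of $\gamma_h$ are bounded on $[\tfrac12-h,\tfrac12+h]$, the energies of both over that interval are $O(h)$, so $J_{\mu+\mu_s,\sigma}(\gamma_h)\ge J_{\mu+\mu_s,\sigma}(\gamma_0)-Ch$. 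Hence $\bar U(y)\ge J_{\mu+\mu_s,\sigma}(\gamma_h)\ge\bar U(x)+W(x,y)-2\eps-Ch$; letting $h\to0$ and then $\eps\to0$ gives the claim, and taking the supremum over $x$ yields $\bar U(y)\ge\sup_x\{\bar U(x)+W(x,y)\}$.

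Combining the two directions gives \eqref{eq:dpp-bc} with $\sup$ in place of $\max$; to upgrade to $\max$ I would note that $\{x\in\partial\R^2_+:x\leqq y\}$ is a compact segment and $x\mapsto\bar U(x)+W(x,y)$ is continuous on it — $\bar U$ restricted to $\partial\R^2_+$ is the integral in \eqref{eq:Ubc1}--\eqref{eq:Ubc2} with $\mu$ replaced by $\mu+\mu_s$ (continuous since $\mu+\mu_s$ is bounded), and $W(\cdot,y)$ is continuous by Corollary \ref{cor:reg} — so the supremum is attained. I expect the main obstacle to be precisely the regularization in the ``$\ge$'' step: replacing the piecewise-$C^1$ concatenation by a bona fide $C^1$ monotone competitor while keeping it inside $[0,\infty)^2$ and controlling $J_{\mu+\mu_s,\sigma}$. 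Some care is needed here because $\ell_{\mu,\sigma}(x,\cdot)$ fails to be Lipschitz where $p_1p_2=0$; the point is that this step uses only the boundedness of $\mu,\sigma$ (and of the relevant derivatives) and no continuity of $\mu$, so the discontinuities of $\mu$ never interfere.
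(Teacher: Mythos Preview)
Your proof is correct and takes essentially the same approach as the paper: split a near-optimal curve at its last boundary time for the ``$\le$'' direction, concatenate near-optimal pieces for the ``$\ge$'' direction, and use continuity (Corollary~\ref{cor:reg} and \eqref{eq:Ubc2}) to upgrade $\sup$ to $\max$. The only notable differences are that for ``$\ge$'' the paper first perturbs the junction point $x\in\partial\R^2_+$ to a nearby interior point $z$ (via Corollary~\ref{cor:reg}) before picking the second piece, whereas your inequality $J_{\mu+\mu_s,\sigma}(\beta)\ge J_{\mu,\sigma}(\beta)$ from $\mu_s\ge0$ is a cleaner shortcut that avoids this detour, and that the paper simply stitches the pieces together without commenting on the resulting curve being only piecewise $C^1$, whereas you address the smoothing explicitly---a slick alternative to your interpolation is to reparametrize each piece so its velocity vanishes at the junction, which (by parametrization invariance of $J$) costs nothing and makes the concatenation automatically $C^1$.
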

Notice that the boundary source $\mu_s$ is absent in the term $W_{\mu,\sigma}$ in \eqref{eq:dpp-bc}.  This allows us to concentrate much of our analysis on $W_{\mu,\sigma}$, which involves only the macroscopic inhomogeneities in the bulk $\R^2_+$, and then extend our results to hold for $U_{\mu+\mu_s,\sigma}$ via the dynamic programming principle \eqref{eq:dpp-bc}.
\begin{proof}
  We first note that the maximum in \eqref{eq:dpp-bc} is indeed attained, due to the continuity of $U_{\mu+\mu_s,\sigma}$ restricted to $\partial \R^2_+$ and Corollary \ref{cor:reg}.

If $y \in \partial \R^2_+$, then in light of \eqref{eq:Ubc2}, \eqref{eq:Wbc2} and the fact that $\mu_s\geq0$, the maximum in \eqref{eq:dpp-bc} is attained at $x=y$ and the validity of \eqref{eq:dpp-bc} is trivial.

Suppose now that $y \in \R^2_+$ and let $v(y)$ denote the right hand side in \eqref{eq:dpp-bc}, and set $U=U_{\mu+\mu_s,\sigma}$.  We first show that $U \leq v$.  Let $\eps > 0$ and $\gamma \in \A$ such that $\gamma(0)=0$, $\gamma(1)=y$ and $J_{\mu+\mu_s,\sigma}(\gamma) \geq U(y) - \eps$.  Let
\[s = \sup \big\{ t \in [0,1] \, : \gamma(t) \in \partial \R^2_+\big\}.\]
Then we have $0 \leq s < 1$.  Set $x=\gamma(s)$ and
\[\gamma^1(t) = \gamma(st) \ \ \text{and} \ \ \gamma^2(t) = \gamma\big(s + t(1-s)\big),\]
for $t \in [0,1]$.  Then we have
\[U(y) \leq J_{\mu+\mu_s,\sigma}(\gamma) +\eps  = J_{\mu + \mu_s,\sigma}(\gamma^1) + J_{\mu,\sigma}(\gamma^2) + \eps \leq U(x) + W(x,y) + \eps.\]
Sending $\eps \to 0$ we have $U \leq v$. 

We now show that $v \leq U$.  Let $x \in \partial \R^2_+$ be a point at which the maximum is attained in \eqref{eq:dpp-bc} and let $\eps > 0$.  Let $\gamma^1 \in \A$ with $\gamma^1(0)=0$, $\gamma^1(1)=x$ such that $U(x) \leq J_{\mu+\mu_s,\sigma}(\gamma^1) + \frac{\eps}{3}$.  Let $z \in [x,y]$ such that $z \in \R^2_+$ and $W(x,y) \leq W(z,y) + \frac{\eps}{3}$.  Let $\gamma^2 \in \A$ with $\gamma^2(0)=z$, $\gamma^2(1)=y$ such that $W(z,y) \leq J_{\mu,\sigma}(\gamma^2) + \frac{\eps}{3}$.  We can stitch together $\gamma^1$ and $\gamma^2$ as follows
\[\gamma(t) = \begin{cases}
\gamma^1(3t),& \text{if } 0 \leq t < \frac{1}{3}, \\
x + (3t-1)(z-x),& \text{if } \frac{1}{3} \leq t < \frac{2}{3}, \\
\gamma^2(3t-2),& \text{if } \frac{2}{3} \leq t \leq 1.
\end{cases}\]
Then we have
\begin{align*}
v(y) = U(x) + W(x,y) &\leq U(x) + W(z,y) + \frac{\eps}{3}\\
&\leq J_{\mu+\mu_s,\sigma}(\gamma^1)  + J_{\mu,\sigma}(\gamma^2) + \eps\leq J_{\mu+\mu_s,\sigma}(\gamma) + \eps \leq U(y) + \eps,
\end{align*}
where we used the fact that $\gamma^2(t) \in \R^2_+$ for all $t$, hence $J_{\mu,\sigma}(\gamma^2)=J_{\mu+\mu_s,\sigma}(\gamma^2)$.
Sending $\eps \to 0$ we have $v \leq U$.  
\end{proof}

Before continuing with the regularity result for $U_{\mu+\mu_s,\sigma}$, let us introduce a bit of notation.  For $\xi \in \R^d_+$, let $\pi_\xi:\R^d \to [0,\xi]$ denote the projection mapping $\R^d$ onto the convex set $[0,\xi]$.  For $x \in [0,\infty)^d$, $\pi_\xi$ is given explicitly by
\begin{equation}\label{eq:proj}
\pi_\xi(x) = \Big(\min(x_1,\xi_1),\dots,\min(x_d,\xi_d)\Big).
\end{equation}

\begin{corollary}\label{cor:Ureg}
Suppose that $\mu$ satisfies (F1*) for $\theta\geq 0$, $\mu_s$ satisfies (F2), and suppose that $\sigma$ is bounded and Borel-measurable.  Then for every $R>0$ there exists a modulus of continuity $\omega$, and a constant $C=C(C_{lip},\|\mu\|_{\infty}, \|\sigma\|_{\infty},\|\mu_s\|_\infty,R)>0$ such that
\begin{equation}\label{eq:approx-holder3}
  |U_{\mu+\mu_s,\sigma}(x) - U_{\mu+\mu_s,\sigma}(y)| \leq C\left(\sqrt{|x-y|} + \omega(|x-y|) + \omega(\theta)\right),
\end{equation}
for all $x,y \in [0,R]^2$.  As in Theorem \ref{thm:reg}, $\omega$ depends only on $\Gamma,\{\Gamma_i\}_{i \in \Z}$ and $R>0$.
\end{corollary}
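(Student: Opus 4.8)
The plan is to bootstrap the regularity of $W_{\mu,\sigma}$ established in Theorem \ref{thm:reg} and Corollary \ref{cor:reg} up to $U := U_{\mu+\mu_s,\sigma}$ through the dynamic programming principle \eqref{eq:dpp-bc} of Proposition \ref{prop:dpp}, using in addition only the elementary fact that $U$ restricted to each coordinate axis is Lipschitz: by \eqref{eq:Ubc1} the map $x_1 \mapsto U(x_1,0) = \int_0^{x_1}(\mu+\mu_s)(t,0)\,dt$ has Lipschitz constant $\|\mu+\mu_s\|_\infty$, and similarly on $\{x_1=0\}$. Since $\ell_{\mu+\mu_s,\sigma}\geq 0$, the function $U$ is monotone non-decreasing, so as in the opening reduction of the proof of Theorem \ref{thm:reg} it suffices to prove the estimate for $x \leqq y$ in $[0,R]^2$; the general case follows by writing $x' = (\min(x_1,y_1),\min(x_2,y_2)) \leqq x,y$ and using $|U(x)-U(y)| \leq |U(x)-U(x')| + |U(y)-U(x')|$ together with $|x-x'|,|y-x'| \leq |x-y|$ and the monotonicity of $\sqrt{\cdot}$ and of the modulus $\omega$ supplied by Theorem \ref{thm:reg}.

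So fix $x \leqq y$ in $[0,R]^2$; monotonicity already gives $U(y)-U(x)\geq 0$, and it remains to bound this from above. By Proposition \ref{prop:dpp} choose $x^* \in \partial \R^2_+$ with $x^* \leqq y$ realizing the maximum, so that $U(y) = U(x^*) + W_{\mu,\sigma}(x^*,y)$, and set $x' = \pi_x(x^*) = (\min(x^*_1,x_1),\min(x^*_2,x_2))$. Then $x'$ still lies on $\partial \R^2_+$ (projection onto $[0,x]$ preserves a vanishing coordinate since $x \geqq 0$), it satisfies $x' \leqq x \leqq y$, and since $x^*$ has a zero coordinate and $x^* \leqq y$ one checks $|x^*-x'| \leq |x-y|$. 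Feasibility of $x'$ in \eqref{eq:dpp-bc} for the point $x$ gives $U(x) \geq U(x') + W_{\mu,\sigma}(x',x)$, hence
\[
0 \leq U(y)-U(x) \leq \big(U(x^*)-U(x')\big) + \big(W_{\mu,\sigma}(x^*,y) - W_{\mu,\sigma}(x',x)\big).
\]
The first difference is at most $\|\mu+\mu_s\|_\infty|x^*-x'| \leq \|\mu+\mu_s\|_\infty|x-y|$ by the boundary Lipschitz bound, since $x^*$ and $x'$ lie on the same axis. For the second, I would insert $W_{\mu,\sigma}(x',y)$, bound $|W_{\mu,\sigma}(x^*,y)-W_{\mu,\sigma}(x',y)|$ by Corollary \ref{cor:reg} (both initial points $\leqq y$, all points in $[0,R]^2$, and $|x^*-x'|\leq|x-y|$), and bound $|W_{\mu,\sigma}(x',y)-W_{\mu,\sigma}(x',x)|$ by Theorem \ref{thm:reg} (endpoints $x,y \geqq x'$). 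Each is $\leq C(\sqrt{|x-y|}+\omega(|x-y|)+\omega(\theta))$, which yields \eqref{eq:approx-holder3} with $C$ depending additionally on $\|\mu_s\|_\infty$ and with $\omega$ the modulus from Theorem \ref{thm:reg}.

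I do not anticipate a serious obstacle. The one place that needs genuine care is the case $x^* \not\leqq x$, in which the boundary point that is optimal for $U(y)$ is not admissible for $U(x)$ and must be projected onto $[0,x]$; one must verify that this projection stays on $\partial \R^2_+$, stays $\leqq y$, and moves by no more than $|x-y|$, and it is exactly here that both the endpoint regularity (Theorem \ref{thm:reg}) and the initial-point regularity (Corollary \ref{cor:reg}) of $W_{\mu,\sigma}$, as well as the axis-Lipschitz bound on $U$, are brought to bear simultaneously.
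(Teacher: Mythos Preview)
Your proof is correct and is essentially identical to the paper's own argument: the paper too picks the optimal boundary point $y'$ for $U(y)$ via Proposition~\ref{prop:dpp}, projects it to $x'=\pi_x(y')$, uses the resulting lower bound $U(x)\geq U(x')+W(x',x)$, and then splits $U(y)-U(x)\leq \|\mu+\mu_s\|_\infty|x'-y'| + |W(y',y)-W(x',y)| + |W(x',y)-W(x',x)|$ before invoking Theorem~\ref{thm:reg} and Corollary~\ref{cor:reg}. Your notation $x^*$ plays the role of the paper's $y'$, and your verification that the projection stays on the same axis and moves by at most $|x-y|$ is exactly what the paper uses (implicitly) when it notes $|x'-y'|\leq|x-y|$.
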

\begin{proof}
  Let $x,y \in [0,R]^2$ and set $U=U_{\mu+\mu_s,\sigma}$ and $W=W_{\mu,\sigma}$.  As in Theorem \ref{thm:reg} we may assume that $x \leqq y$.  By Proposition \ref{prop:dpp}, there exists $y' \in \partial \R^2_+$ with $y' \leqq y$ such that
\begin{equation}\label{eq:dpp1-U}
U(y) = U(y') + W(y',y).
\end{equation}
Set $x' = \pi_x(y')$.  Then since $x' \in \partial \R^2_+$ and $x' \leqq x$, we have  by Proposition \ref{prop:dpp} that
\begin{equation}\label{eq:dpp2-U}
U(x) \geq U(x') + W(x',x).
\end{equation}
By subtracting \eqref{eq:dpp2-U} from \eqref{eq:dpp1-U} and recalling \eqref{eq:Ubc2} we have
\begin{align*}
|U(x) - U(y)| &= U(y)-U(x) \\
&\leq U(y') - U(x') + W(y',y) - W(x',x)\\
&\leq \|\mu + \mu_s\|_\infty|x'-y'| + |W(y',y) - W(x',y)| + |W(x',y) - W(x',x)|. 
\end{align*}
The proof is completed by applying Theorem \ref{thm:reg} and Corollary \ref{cor:reg} and noting that \\ $|x'-y'| \leq |x-y|$.
\end{proof}
Of course, Remark \ref{rem:reg} holds with obvious modifications for $U$ and \eqref{eq:approx-holder3}.
\begin{remark}\label{rem:discont}
The hypothesis that the curves $\Gamma_i$ are continuous and strictly increasing cannot in general be weakened to continuous and non-decreasing.  For example, consider the case where $\mu=\sigma=1$ on $[0.5,1]\times[0,1]$ and $\mu=\sigma=0$ on $[0,0.5)\times[0,1]$.  Then we have
\[U_{\mu,\sigma}(x) = \begin{cases}
0,&\text{if } x \in [0,0.5)\times[0,1],\\
x_1 + x_2 - 0.5 + 2\sqrt{(x_1-0.5)x_2},&\text{if } x \in [0.5,1]\times[0,1],
\end{cases}\]
which has a discontinuity along the vertical line $\{x_1=0.5\}$, which would correspond to one of the curves $\Gamma_i$ on which $\mu$ is discontinuous.  
\end{remark}

\subsection{Hamilton-Jacobi-Bellman equation}
\label{sec:hjb}

In this section we show in Theorem \ref{thm:hjb} that $U_{\mu + \mu_s,\sigma}$ is a viscosity solution of (P).  In fact, (P) is the Hamilton-Jacobi-Bellman equation for the simple optimal control problem~\cite{bardi1997} defined by $U_{\mu + \mu_s,\sigma}$.  For more information on  the  connection between Hamilton-Jacobi equations and optimal control problems, we refer the reader to~\cite{bardi1997}.

Let us pause momentarily to recall the definition of viscosity solution of
\begin{equation}\label{eq:genHJ}
H(x,Du) = 0  \ \ \text{ on } \ \   \O,
\end{equation}
where $\O \subset \R^d$ is open, $H:\O\times \R^d \to \R$ is locally bounded with $p \mapsto H(x,p)$ continuous for every $x \in \O$, and $u: \O \to \R$ is the unknown function.  For more information on viscosity solutions of Hamilton-Jacobi equations,  we refer the reader to~\cite{bardi1997,crandall1992}.  

We denote by $\text{USC}(\O)$ (resp.~$\text{LSC}(\O)$) the set of upper semicontinuous (resp.~lower semicontinuous) functions on $\O$. For $u:\O \to \R$, the \emph{superdifferential} of $u$ at $x \in \O$, denoted $D^+u(x)$, is the set of all $p \in \R^d$ satisfying
\begin{equation}\label{eq:superjet}
u(y) \leq u(x) + \langle p,y-x\rangle + o(|x-y|) \ \text{ as } \ \O \ni y \to x.
\end{equation}
Similarly, the \emph{subdifferential} of $u$ at $x\in \O$, denoted $D^-u(x)$, is the set of all $p \in \R^d$ satisfying
\begin{equation}\label{eq:subjet}
u(y) \geq u(x) + \langle p,y-x\rangle + o(|x-y|) \ \text{ as } \ \O \ni y \to x.
\end{equation}
Equivalently, we may set
\[D^+ u(x) = \{ D\phi(x) \, : \, \phi \in C^1(\O) \ \text{ and } \ u-\phi \ {\rm has \ a \ local \ max \ at \ } x\},\]
and
\[D^- u(x) = \{ D\phi(x) \, : \, \phi \in C^1(\O) \ \text{ and } \ u-\phi \ {\rm has \ a \ local \ min \ at \ } x\}.\]
\begin{definition}
A \emph{viscosity subsolution} of \eqref{eq:genHJ} is a function $u \in \text{USC}(\O)$ satisfying
\begin{equation}
\liminf_{y \to x} H(y,p) =:H_*(x,p) \leq 0 \ \text{ for all } \ x \in \O \ \text{ and } \ p \in D^+u(x).
\end{equation}
Similarly, a \emph{viscosity supersolution} of \eqref{eq:genHJ} is a function $u\in \text{LSC}(\O)$ satisfying
\begin{equation}
\limsup_{y \to x} H(y,p)=:H^*(x,p) \geq 0 \ \text{ for all } \ x \in \O \ \text{ and } \ p \in D^-u(x).
\end{equation}
\end{definition}

The functions $H_*$ and $H^*$ are the lower and upper semicontinuous envelopes of $H$ with respect to the spatial variable, respectively. We will often say $u$ is a viscosity solution of
\[ H(x,Du) \leq 0 \ \ \ (\text{resp. } H(x,Du) \geq 0) \ \ \text{on } \ \ \O,\]
to indicate that $u$ is a viscosity subsolution (resp.~supersolution) of \eqref{eq:genHJ}. 
 If $u$ is a viscosity subsolution and supersolution of \eqref{eq:genHJ}, then we say that $u$ is a \emph{viscosity solution} of \eqref{eq:genHJ}.  Notice that viscosity solutions defined in this way are necessarily continuous.  

\begin{theorem}\label{thm:hjb}
Suppose that $\mu,\sigma:[0,\infty)^2 \to [0,\infty)$ are Borel-measurable and bounded.  Let $z \in [0,\infty)^2$ and set
  $V(x) = W_{\mu,\sigma}(z,x)$ for $x \in [z,\infty)$.   If $V$ is continuous then $V$ satisfies
\begin{equation}\label{eq:hjb}\left.\begin{aligned}
(V_{x_1} - \mu)_+(V_{x_2} - \mu)_+ &=\sigma^2& &\text{on } (z,\infty),\\
\min(V_{x_1},V_{x_2}) &\geq \mu& &\text{on } (z,\infty),\\
\end{aligned}\right\}\end{equation}
in the viscosity sense.
\end{theorem}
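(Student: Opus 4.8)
The plan is to identify the system \eqref{eq:hjb} as the Hamilton--Jacobi--Bellman equation of the control problem defining $V=W_{\mu,\sigma}(z,\cdot)$ and to read the sub/super-solution properties off a dynamic programming principle (DPP). Since $\ell_{\mu,\sigma}(x,q)$ is $1$-homogeneous in $q$, the associated Hamiltonian $H(x,p)=\sup\big\{\ell_{\mu,\sigma}(x,q)-\langle q,p\rangle \ : \ q\geqq 0,\ |q|=1\big\}$ takes only the values $0$ and $+\infty$; an elementary optimization (Lagrange multipliers, or the identity $a q_1+b q_2-2s\sqrt{q_1 q_2}=(\sqrt{a q_1}-\sqrt{b q_2})^2$ when $ab=s^2$) shows $H(x,p)=0$ precisely on $\{p_1\ge\mu(x),\ p_2\ge\mu(x),\ (p_1-\mu(x))(p_2-\mu(x))\ge\sigma(x)^2\}$, whose boundary is exactly the locus cut out by \eqref{eq:hjb}. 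So the content is: at superdifferential points $V$ stays on (one side of) this boundary, and at subdifferential points on the other side.

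First I would prove the local DPP: for $x\in(z,\infty)$ and every small $r>0$, $V(x)=\sup\big\{V(y)+W_{\mu,\sigma}(y,x) \ : \ y\leqq x,\ |x-y|=r\big\}$. Here ``$\ge$'' is concatenation of a path from $z$ to $y$ with one from $y$ to $x$ (valid once $r<\min_i(x_i-z_i)$, since then $z<y\leqq x$), and ``$\le$'' follows by taking a near-optimal $\gamma\in\A$ from $z$ to $x$, cutting it at the last parameter where $|\gamma(\cdot)-x|=r$, and using additivity of $J_{\mu,\sigma}$ under concatenation. I will also record the one-sided estimates on $W_{\mu,\sigma}(x-rq,x)$ for $q\geqq 0$: from below by the straight segment and $1$-homogeneity, $W_{\mu,\sigma}(x-rq,x)\ge r\inf_{[x-rq,x]}\ell_{\mu,\sigma}(\cdot,q)$; from above, using that any monotone path in the box $[x-rq,x]$ has $\int\gamma_i'=rq_i$ and $\int\sqrt{\gamma_1'\gamma_2'}\le r\sqrt{q_1q_2}$ (Cauchy--Schwarz, as in the proof of Theorem~\ref{thm:reg}), $W_{\mu,\sigma}(x-rq,x)\le r\big(\sup_{[x-rq,x]}\mu\,(q_1+q_2)+2\sup_{[x-rq,x]}\sigma\,\sqrt{q_1q_2}\big)$.

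For the supersolution part, let $\phi\in C^1$, let $V-\phi$ have a local minimum at $x_0\in(z,\infty)$, and put $p=D\phi(x_0)\in D^-V(x_0)$. For each fixed $q\geqq 0$, $|q|=1$, the ``$\ge$'' half of the DPP gives $V(x_0)-V(x_0-rq)\ge r\inf_{[x_0-rq,x_0]}\ell_{\mu,\sigma}(\cdot,q)$, while the local-minimum inequality gives $V(x_0)-V(x_0-rq)\le r\langle p,q\rangle+o(r)$; dividing by $r$ and letting $r\to0$ yields $\langle p,q\rangle\ge \mu_*(x_0)(q_1+q_2)+2\sigma_*(x_0)\sqrt{q_1q_2}$ with $\mu_*,\sigma_*$ the lower semicontinuous envelopes. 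As this holds for every $q\geqq 0$, taking $q=e_1,e_2$ gives $\min(V_{x_1},V_{x_2})\ge\mu$ and optimizing over $q$ (the algebraic identity above) gives $(V_{x_1}-\mu)_+(V_{x_2}-\mu)_+\ge\sigma^2$, both in the viscosity sense. For the subsolution part, let $V-\phi$ have a local maximum at $x_0\in(z,\infty)$, $p=D\phi(x_0)\in D^+V(x_0)$. Now the ``$\le$'' half of the DPP is needed: choose near-optimal cut points $y_r$ with $|y_r-x_0|=r$ and $V(x_0)\le V(y_r)+W_{\mu,\sigma}(y_r,x_0)+r^2$, write $y_r=x_0-rq_r$, combine with the local-maximum inequality and the upper bound on $W_{\mu,\sigma}(y_r,x_0)$, divide by $r$, and pass to a subsequence with $q_r\to q^*$; this produces an admissible $q^*$ with $\langle p,q^*\rangle\le\mu^*(x_0)(q^*_1+q^*_2)+2\sigma^*(x_0)\sqrt{q^*_1q^*_2}$, which by the algebraic identity is exactly $(V_{x_1}-\mu)_+(V_{x_2}-\mu)_+\le\sigma^2$ at $x_0$, the remaining inequality.

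The main obstacle is the interaction of the discontinuity of $\mu,\sigma$ with the $1$-homogeneity of $\ell_{\mu,\sigma}$. Homogeneity forces the naive Hamiltonian to be $\{0,+\infty\}$-valued, so one cannot differentiate the DPP directly; instead one must keep separate (i) the ``for all directions $q$'' inequality from the trivial half of the DPP, which gives $\min(V_{x_1},V_{x_2})\ge\mu$ and the $\ge$ side, and (ii) the ``for the realized direction $q^*$'' inequality from near-optimal paths, which gives the $\le$ side — it is the pairing of these that produces the \emph{equality} in \eqref{eq:hjb}. Because $\mu,\sigma$ are merely Borel, no limit $r\to0$ can be taken pointwise; all of them must be routed through the semicontinuous envelopes $\mu_*,\mu^*,\sigma_*,\sigma^*$, which is precisely why \eqref{eq:hjb} is asserted ``in the viscosity sense'' using the envelopes $H_*,H^*$ of Section~\ref{sec:hjb}, and the algebraic identity must be verified in the degenerate configurations ($p_i=\mu$, or $\sigma=0$). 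Everything else is the standard optimal-control argument~\cite{bardi1997}.
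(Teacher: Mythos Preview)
Your proposal is correct and follows essentially the same route as the paper: both arguments rest on the local dynamic programming principle $V(x)=\sup\{V(y)+W_{\mu,\sigma}(y,x):y\leqq x,\ |x-y|=r\}$, derive from its ``$\ge$'' half the inequality $\langle p,q\rangle\ge\mu_*(x_0)(q_1+q_2)+2\sigma_*(x_0)\sqrt{q_1q_2}$ for every direction $q$ (whence $\min(p_1,p_2)\ge\mu_*$ and, after optimizing over $q$, the supersolution inequality), and from its ``$\le$'' half the existence of a single direction $q^*$ realizing the reverse inequality with the upper envelopes (whence the subsolution inequality). The only differences are cosmetic: you normalize directions by $|q|=1$ whereas the paper works with $a\in\R^2_+$ (and, for the subsolution, the slice $a_1a_2=1$), and where you invoke your algebraic identity the paper carries out the corresponding Lagrange-multiplier computation and case split explicitly.
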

Recall that $[z,\infty)=\{y\in \R^2 \, : \, y \geqq z\}$, and $(z,\infty) = \{y \in \R^2 \, : \, y > z\}$. 
\begin{proof}
The proof is based on a standard technique from optimal control theory for relating variational problems to Hamilton-Jacobi equations~\cite{bardi1997}.  The proof is very similar to~\cite[Theorem 2]{calder2014}.   We will only sketch parts of the proof here.    

The proof is based on the following dynamic programming principle 
\begin{equation}\label{eq:dpp}
V(y) = \sup_{x \in \partial B_r(y) \, : \, x \leqq y } \Big\{ V(x) + W(x,y)\Big\},
\end{equation}
which holds for $y \in (z,\infty)$ and $r>0$ small enough so that $B_r(y) \subset (z,\infty)$. The proof of \eqref{eq:dpp} is very similar to the proof of Proposition \ref{prop:dpp}.

We now show that $V$ is a viscosity solution of \eqref{eq:hjb}.  Let $y \in (z,\infty)$ and let $p \in D^- V(y)$.   As in~\cite{calder2014}, we can use the dynamic programing principle to obtain
\begin{equation}\label{eq:super-a}
\sup_{a \in \R^2_+} \Big\{ -\big\langle p - \mu_*(y)(1,1),a\big\rangle + 2\sigma_*(y)\sqrt{a_1a_2}\Big\} \leq 0.
\end{equation}
Suppose now that $\sigma_*(y)=0$. Then we automatically have 
\[(p_1-\mu_*(y))_+(p_2 - \mu_*(y))_+ \geq 0 = \sigma^2_*(y).\]  
Furthermore, it follows from \eqref{eq:super-a} that $\min(p_1,p_2) \geq \mu_*(y)$, so we are done.  Consider now $\sigma_*(y)>0$.  Setting $a_1=1$ in \eqref{eq:super-a} we have
\[\sup_{a_2>0} \Big\{ -(p_1 - \mu_*(y)) - (p_2 - \mu_*(y))a_2 + 2\sigma_*(y)\sqrt{a_2}\Big\} \leq 0.\]
It follows that $p_2 > \mu_*(y)$.  By a similar argument we have $p_1 > \mu_*(y)$, and hence we have $\min(p_1,p_2) > \mu_*(y)$. This establishes that $V$ is a viscosity solution of 
\[ \min(V_{x_1},V_{x_2}) \geq \mu \ \ \text{ on } \  (z,\infty).\]
  Now set 
\begin{equation}\label{eq:maximizer}
a_1 = \sqrt{\frac{p_2 - \mu_*(y)}{p_1 - \mu_*(y)}}  \ \  \text{ and } \ \ a_2 = \sqrt{\frac{p_1 - \mu_*(y)}{p_2 - \mu_*(y)}} 
\end{equation}
in \eqref{eq:super-a} and simplify to find that
\[(p_1 - \mu_*(y)) (p_2 - \mu_*(y)) \geq {\sigma_*}^2(y).\]
Therefore $V$ is a viscosity solution of 
\[(V_{x_1} - \mu)_+(V_{x_2} - \mu)_+ \geq \sigma^2 \ \ \text{ on } \ (z,\infty).\]

  Let $y \in (z,\infty)$ and let $p \in D^+ V(y)$.  Utilizing the dynamic programing principle \eqref{eq:dpp} again we have
\begin{equation}\label{eq:sub-a}
\sup_{a \in \R^2_+ \, : \, a_1a_2 = 1} \Big\{ -\big\langle p - \mu^*(y)(1,1),a\big\rangle + 2\sigma^*(y)\Big\} \geq 0.
\end{equation}
If $\min(p_1,p_2) \leq \mu^*(y)$ then we immediately have
\[(p_1 - \mu^*(y))_+ (p_2 - \mu^*(y))_+ = 0 \leq {\sigma^*(y)}^2.\]
If  $\min(p_1,p_2)> \mu^*(y)$ then we have that
\[\limsup_{|a| \to \infty \, : \, a \in \R^2_+}-\big\langle p - \mu^*(y)(1,1),a\big\rangle + 2\sigma^*(y) = -\infty.\] 
It follows that the supremum in \eqref{eq:sub-a} is attained at some $a^* \in \R^2_+$. Introducing a Lagrange multiplier $\lambda>0$, the necessary conditions for $a^*$ to be a maximizer of the constrained maximization problem \eqref{eq:sub-a} are
\[a^*_1 = \lambda(p_2-\mu^*(y)), \ a^*_2 = \lambda(p_1-\mu^*(y)), \ \ \text{and } a_1^*a_2^* = 1.\] 
It follows that $\lambda = (p_1-\mu^*(y))^{-\frac{1}{2}}(p_2-\mu^*(y))^\frac{1}{2}$ and $a^*$ is given by \eqref{eq:maximizer}.   Substituting this into \eqref{eq:sub-a} we find that
\[(p_1 - \mu^*(y)) (p_2 - \mu^*(y)) \leq {\sigma^*}^2(y),\]
and hence $V$ is a viscosity solution of
\[(V_{x_1} - \mu)_+(V_{x_2} - \mu)_+ \leq \sigma^2 \ \ \text{ on } \ (z,\infty),\]
which completes the proof.
\end{proof}
\begin{remark}\label{rem:hjb}
  It follows from Theorem \ref{thm:hjb} that $U=U_{\mu + \mu_s,\sigma}$ is a viscosity solution of (P) and satisfies
  \begin{equation}
    \min(U_{x_1},U_{x_2}) \geq \mu \ \ \text{on } \R^2_+
    \label{eq:mingrad}
  \end{equation}
  in the viscosity sense.  Indeed, we can simply apply Theorem \ref{thm:hjb} with  $\mu+\mu_s$ in place of $\mu$ and $z=0$, in which case we have $U(x) = W_{\mu+\mu_s,\sigma}(0,x)$.  
\end{remark}

\section{Comparison Principle}
\label{sec:comparison}

We study here the general Hamilton-Jacobi equation
\begin{equation}
\left. \begin{aligned}
H(x,Du) &= 0& &\text{on } (z,\infty), \\
u &= \phi& &\text{on } \partial (z,\infty).
\end{aligned}\right\}
\end{equation}
Here, $z \in [0,\infty)^d$,  $\phi:\partial (z,\infty)\to \R$ is continuous and monotone, $H:\R^d_+\times \R^d \to \R$ is the Hamiltonian, and $u:[z,\infty)\to \R$ is the unknown function.  For simplicity of notation, we will set $z=0$ throughout much of this section.  The case where $z\neq 0$ follows by a simple translation argument.    

We place the following assumptions on $H$: 
\begin{itemize}
\item[(H1)] For every $x \in \R^d_+$, the mapping $H(x,\cdot):\R^d \to \R$ is monotone non-decreasing.
\item[(H2)] There exists a modulus of continuity $m$ such that
\begin{equation}\label{eq:modulus-H}
H(x,p) - H(y,p) \leq m(|p||x-y| + |x-y|)
\end{equation}
for all $p \in [0,\infty)^d$ and $x,y \in \R^d_+$.
\end{itemize}
The assumption (H1) is clearly satisfied by (P), and generalizes the comparison results in our previous work~\cite{calder2014}, which was focused on the special case of $H(x,p) = p_1\cdots p_d - f(x)$.  The assumption (H2) is standard in the theory of viscosity solutions~\cite{crandall1992}.  

We now give a comparison principle for Hamiltonians $H$ satisfying (H1) and (H2).  
\begin{theorem}\label{thm:comp}
Suppose that $H$ satisfies (H1) and (H2). Let $u \in \text{USC}([0,\infty)^d)$ be a viscosity solution of
\begin{equation}\label{eq:u-sub}
H(x,Du) \leq 0 \ \ \text{in } \R^d_+,
\end{equation}
let $v \in \text{LSC}([0,\infty)^d)$ be a monotone viscosity solution of 
\begin{equation}\label{eq:v-super}
H(x,Dv) \geq a \ \ \text{in } \R^d_+,
\end{equation}
where $a >0$, and suppose that $u \leq v$ on $\partial \R^d_+$. Then $u \leq v$ on $\R^d_+$.
\end{theorem}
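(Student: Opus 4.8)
The plan is to argue by contradiction, supposing that $\sup_{\R^d_+}(u-v) =: \delta > 0$, and to use the standard viscosity-solution machinery of doubling the variables, combined with a perturbation that exploits monotonicity of $v$ to push the maximum into the interior of $\R^d_+$. First I would replace $v$ by $v_\lambda(x) := v(x+\lambda\mathbf{1})$ for small $\lambda > 0$, which by monotonicity of $v$ satisfies $v_\lambda \leq v$ and is still a viscosity supersolution of $H(x,Dv_\lambda) \geq a$ on a slightly shifted domain; because $\sup(u - v_\lambda)$ varies continuously in $\lambda$ (using uniform continuity near the boundary, or at least semicontinuity and the boundary ordering $u\le v\le v_\lambda$ near $\partial\R^d_+$ — here one uses that $v$ is monotone so that shifting inward only decreases it), we may assume the positive supremum is attained at an interior point, or at least approached along interior points bounded away from $\partial\R^d_+$. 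A further standard device is to add a term $-\eta(|x|^2+1)$ to $u$ (or $+\eta$ to $v$) to guarantee the relevant suprema are attained on compact sets and are strict; since $H$ satisfies (H1), adding such a concave perturbation keeps $u - \eta(|x|^2+1)$ a subsolution up to an error $o(1)$ as $\eta\to 0$, which will be absorbed at the end.

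Next I would carry out the doubling of variables: for $\alpha > 0$ large, consider
\[
\Phi_\alpha(x,y) = u(x) - v(y) - \frac{\alpha}{2}|x-y|^2 - \eta(|x|^2+1),
\]
and let $(x_\alpha,y_\alpha)$ be a maximizer over the appropriate compact set. The classical lemma (Crandall–Ishii) gives $\alpha|x_\alpha - y_\alpha|^2 \to 0$, that $x_\alpha, y_\alpha$ converge (along a subsequence) to a common point $\hat x$ in the interior $\R^d_+$ achieving the perturbed supremum, and — crucially — that $p_\alpha := \alpha(x_\alpha - y_\alpha)$ lies in $D^+(u - \eta(|\cdot|^2+1))(x_\alpha)$ and in $D^- v(y_\alpha)$. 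Because $v$ is monotone non-decreasing, $D^-v(y_\alpha) \subset [0,\infty)^d$, so $p_\alpha \geqq 0$; this is exactly what makes hypothesis (H2) applicable. Writing $\tilde p_\alpha = p_\alpha + 2\eta x_\alpha$ for the subsolution slope, we get from the definitions of viscosity sub/supersolution:
\[
H_*(x_\alpha, \tilde p_\alpha) \leq 0, \qquad H^*(y_\alpha, p_\alpha) \geq a.
\]

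Now I would combine these two inequalities. Using (H2) — more precisely its consequence for the semicontinuous envelopes, namely $H^*(y,p) - H_*(x,p) \leq m(|p|\,|x-y| + |x-y|)$, which follows by taking limsup/liminf in (H2) — and then (H1) to control the discrepancy $\tilde p_\alpha$ versus $p_\alpha$ (since $\tilde p_\alpha \geqq p_\alpha$, monotonicity gives $H_*(x_\alpha,\tilde p_\alpha) \geq H_*(x_\alpha, p_\alpha)$, hence $H_*(x_\alpha,p_\alpha)\le H_*(x_\alpha,\tilde p_\alpha)\le 0$), we obtain
\[
a \leq H^*(y_\alpha,p_\alpha) - H_*(x_\alpha,p_\alpha) \leq m\big(|p_\alpha|\,|x_\alpha - y_\alpha| + |x_\alpha - y_\alpha|\big).
\]
Since $|p_\alpha|\,|x_\alpha - y_\alpha| = \alpha|x_\alpha - y_\alpha|^2 \to 0$ and $|x_\alpha - y_\alpha|\to 0$, the right side tends to $0$, contradicting $a > 0$. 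Finally I would unwind the perturbations: send $\alpha \to \infty$, then $\eta \to 0$, then $\lambda \to 0$, to conclude that the original assumption $\delta > 0$ is untenable, hence $u \leq v$ on $\R^d_+$.

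The main obstacle I expect is the boundary analysis: ensuring that the doubled supremum $\Phi_\alpha$ is not attained on $\partial\R^d_+$ and that the limit point $\hat x$ lies in the open quadrant where $u$ and $v$ satisfy the PDE in the viscosity sense. This is where the monotonicity hypothesis on $v$ and the strict inequality $a>0$ (i.e.\ $v$ is a \emph{strict} supersolution) must be used in an essential way — the shift $v_\lambda$ trick is designed precisely to create a favorable ordering near the boundary so that any near-maximizer is forced inward. Getting the compactness/coercivity right (the domain is unbounded, so the $-\eta(|x|^2+1)$ term or a growth assumption implicit in $\phi$ and the structure of $H$ is needed to localize) is a secondary technical point, but standard. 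Everything else is a routine application of the Crandall–Ishii lemma together with (H1)–(H2).
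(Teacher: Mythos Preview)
Your doubling-of-variables endgame is correct and matches the paper's: once a maximizer $(x_\alpha,y_\alpha)$ of $\Phi_\alpha$ exists in the interior, the observation $p_\alpha\in D^-v(y_\alpha)\subset[0,\infty)^d$ (from monotonicity of $v$) is exactly what makes (H2) applicable, and the contradiction $a\le m(\alpha|x_\alpha-y_\alpha|^2+|x_\alpha-y_\alpha|)\to 0$ follows as you wrote.

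The genuine gap is compactness. The theorem places no growth restriction on $u\in\text{USC}([0,\infty)^d)$, so $u$ may grow faster than any polynomial and your penalization $-\eta(|x|^2+1)$ need not force $\Phi_\alpha$ to attain its supremum; there is no ``growth assumption implicit in $\phi$'' to fall back on, since the statement is purely about abstract $H$ satisfying (H1)--(H2). The paper resolves the unbounded-domain issue by exploiting (H1) in two ways you did not anticipate. First, it replaces $u$ by $\bar u(x)=c\,\Psi(c^{-1}u(x))$, where $\Psi\in C^1(\R)$ satisfies $\Psi(t)=t$ for $t\le 1$, $\Psi\le 2$, and $0<\Psi'\le 1$; then $\bar u\le 2c$ is bounded above, and the chain rule together with (H1) (since $\Psi'\in(0,1]$ means the rescaled gradient dominates) shows $\bar u$ is still a subsolution. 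Second, it replaces $v$ by $v_h(x)=v(x)+h(x_1+\cdots+x_d)$, which by (H1) remains a supersolution of $H\ge a$ and satisfies $v_h(y)\ge h(y_1+\cdots+y_d)$ after normalizing $v(0)=0$. Boundedness of $\bar u$ and linear coercivity of $v_h$ together force any near-maximizer of $\bar u(x)-v_h(y)-\tfrac{\alpha}{2}|x-y|^2$ into a fixed compact set, with no quadratic penalty needed; the boundary is then ruled out simply because $\bar u\le u\le v\le v_h$ on $\partial\R^d_+$.

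Your shift $v_\lambda(x)=v(x+\lambda\mathbf 1)$ is separately problematic: monotonicity gives $v_\lambda\ge v$, not $\le$ as you first wrote (you correct yourself a line later), and more seriously $v_\lambda$ is a viscosity supersolution of $H(x+\lambda\mathbf 1,Dv_\lambda)\ge a$ rather than of $H(x,Dv_\lambda)\ge a$, so comparing it to $u$ at the same spatial point introduces an (H2)-error that need not be small uniformly in $|p|$. This device is in any case unnecessary once the $\Psi$ and $v_h$ modifications are in place.
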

The proof of Theorem \ref{thm:comp} is based on the auxiliary function technique, which is standard in the theory of viscosity solutions~\cite{crandall1992,bardi1997}, with modifications to incorporate the lack of compactness resulting from the unbounded domain $\R^d_+$. A standard technique for dealing with unbounded domains is to assume the Hamiltonian $H$ is uniformly continuous in the gradient $p$ and modify the auxiliary function (see, for example~\cite[Theorem 3.5]{bardi1997}).   Since (P) is not uniformly continuous in the gradient, we cannot use this technique.  In our previous work~\cite{calder2014}, we included an additional boundary condition at infinity to induce compactness.  It turns out that this is not necessary, and in the proof of Theorem \ref{thm:comp}, we instead heavily exploit the structure of the Hamiltonian, namely (H1), to produce the required compactness.
\begin{proof}
Since $v$ is monotone (i.e., non-decreasing), it is bounded below by $v(0)$.  Without loss of generality we may assume that $v(0)=0$. Let $h > 0$ and set $v_h(x) = v(x) + h(x_1 + x_2)$.  It follows from (H1) that $v_h$ is a viscosity solution of \eqref{eq:v-super}. Assume by way of contradiction that $\sup_{\R^d_+}(u - v_h) > 0$.   Let $\Psi:\R\to\R$ be a $C^1$ function satisfying
\begin{equation}\label{eq2:Psi-prop}
\left.\begin{aligned}
\Psi(t) &= t& &\text{for all } t \leq 1,\\
\Psi(t) &\leq 2& &\text{for all } t \in \R,\\
0 < \Psi'(t)  &\leq 1&  &\text{for all } t \in \R.
\end{aligned}\right\}
\end{equation}
For $c>0$ set $\bar{u}(x) = c\Psi(c^{-1}u(x))$, and choose $c$ large enough so that 
\[\delta:=\sup_{\R^d_+}(\bar{u} - v_h) > 0.\]
Since $\Psi$ is $C^1$ and $\Psi' > 0$, it is a standard application of the chain rule~\cite{bardi1997} to show that $\bar{u}$ is a viscosity solution of 
\begin{equation}\label{eq2:newsub}
H\left(x,\Psi'(c^{-1}u(x))^{-1} D\bar{u}\right) \leq 0 \ \ \text{on } \R^d_+.
\end{equation}
Since $\Psi'(t) \in (0,1]$ for all $t\geq 0$, we can apply (H1) to \eqref{eq2:newsub} to find that $\bar{u}$ is a viscosity solution of \eqref{eq:u-sub}.

For $\alpha>0$ we define
\begin{equation}\label{eq2:Phialpha}
\Phi_\alpha(x,y) = \bar{u}(x) - v_h(y) - \frac{\alpha}{2} |x-y|^2,
\end{equation} 
and $M_\alpha = \sup_{\R^d_+\times \R^d_+} \Phi_{\alpha}$.
Since $\bar{u} \leq 2c$ and $v_h \geq 0$, we have by \eqref{eq2:Phialpha}  that
\begin{equation}\label{eq2:xy}
|x-y| \leq \frac{2\sqrt{c}}{\sqrt{\alpha}} \ \ \text{whenever } \Phi_\alpha(x,y) \geq 0.
\end{equation}
Since $v_h(y) \geq h(y_1 + y_2)$ we have
\begin{equation}\label{eq:tem}
\Phi_\alpha(x,y) \leq 2c - h(y_1 + y_2).
\end{equation}
Since $\Phi_\alpha$ is upper semicontinuous and $M_\alpha \geq \delta > 0$, it follows from \eqref{eq2:xy} and \eqref{eq:tem} that for every $\alpha > 0$ there exist $x_\alpha,y_\alpha \in [0,\infty)^d$ such that 
\begin{equation}\label{eq2:max}
\Phi_\alpha(x_\alpha,y_\alpha) = M_\alpha \geq \delta > 0,
\end{equation}
and 
\begin{equation}\label{eq2:compact}
y_{\alpha,1} + y_{\alpha,2} \leq \frac{2c}{h}.
\end{equation}
Furthermore, by \eqref{eq2:xy} and \eqref{eq2:compact} we see that, upon passing to a subsequence if necessary, we have $x_\alpha,y_\alpha \to x_0$ as $\alpha \to \infty$ for some $x_0 \in [0,\infty)^d$. Since  $(x,y) \mapsto \bar{u}(x)-v_h(y)$ is upper semicontinuous we have
\[\limsup_{\alpha \to \infty} M_\alpha \leq \limsup_{\alpha \to \infty} \bar{u}(x_\alpha) - v_h(y_\alpha) \leq  \bar{u}(x_0) - v_h(x_0).\]
Since $M_\alpha \geq \bar{u}(x_0) - v_h(x_0)$ for all $\alpha$ we have that $M_\alpha \to \bar{u}(x_0)-v(x_0) = \delta>0$ as $\alpha \to \infty$ and hence
\begin{equation}\label{eq2:key}
\alpha|x_\alpha - y_\alpha|^2 \longrightarrow 0.
\end{equation}
Since $\bar{u} \leq v_h$ on $\partial \R^d_+$ we must have $x_0 \in \R^d_+$, and therefore $x_\alpha,y_\alpha \in \R^d_+$ for $\alpha$ large enough.  

Set $p=\alpha(x_\alpha-y_\alpha)$.  By \eqref{eq2:max} we have that
\[p\in D^+ \bar{u}(x_\alpha)\cap D^-v_h(y_\alpha).\]
  Therefore we have
\[H(x_\alpha,p) \leq 0 \ \ \text{and} \ \ H(y_\alpha,p) \geq a.\]
Subtracting the above inequalities and invoking (H2) we have
\[0<a \leq H(y_\alpha,p) - H(x_\alpha,p)\leq m(|p||x_\alpha-y_\alpha| + |x_\alpha-y_\alpha|) \leq m(\alpha|x_\alpha-y_\alpha|^2 + |x_\alpha-y_\alpha|).\]
Sending $\alpha\to \infty$ we arrive at a contradiction.  Therefore $u \leq v_h$, and sending $h \to 0^+$ completes the proof. 
\end{proof}

We now aim to extend this comparison principle to Hamiltonians with discontinuous spatial dependence.  The techniques we use here are a generalization of our previous work on the longest chain problem~\cite{calder2014}.  
We make the following definitions.
\begin{definition}
  Given a function $u: [z,\infty) \to \R$ and $\xi \in [z,\infty)$, we define the $\xi$-\emph{truncation} of $u$ by $u^\xi:= u \circ \pi_\xi$, where $\pi_\xi$ is the projection mapping $\R^d$ onto $[0,\xi]$ defined in \eqref{eq:proj}.
\end{definition}
\begin{definition}
Let $u$ be a viscosity solution of
\begin{equation}\label{eq:trunc-def}
H(x,Du) \leq 0 \ \ \text{on } (z,\infty).
\end{equation}
We say that $u$ is \emph{truncatable} if for every $\xi \in (z,\infty)$, the  $\xi$-truncation $u^\xi$ is a viscosity solution of \eqref{eq:trunc-def}.
\end{definition}
This notion of truncatability is in spirit the same as~\cite[Definition 2.7]{calder2014}, though the exact definition is slightly different for notational convenience.  We first show that the value function $W$ is truncatable. 
\begin{proposition}\label{prop:Utrunc}
Suppose that $\mu,\sigma:[0,\infty)^2 \to [0,\infty)$ are Borel-measurable and bounded.  Let $z \in [0,\infty)^2$ and define $V(x) = W_{\mu,\sigma}(z,x)$ for $x \in [z,\infty)$.   If $V$ is continuous then $V$ is a truncatable viscosity solution of 
\begin{equation}\label{eq:hjb-sub}
(V_{x_1} - \mu)_+(V_{x_2} - \mu)_+ =\sigma^2 \ \ \text{on } (z,\infty).
\end{equation}
\end{proposition}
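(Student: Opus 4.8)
The strategy is to reduce Proposition~\ref{prop:Utrunc} to Theorem~\ref{thm:hjb} applied to a modified pair of densities, using the \emph{key observation} that truncating $V$ at $\xi$ amounts to zeroing out $\mu$ and $\sigma$ outside the box $[z,\xi]$. Fix $\xi\in(z,\infty)$ and set $\mu_\xi=\mu\,\mathbf{1}_{[z,\xi]}$ and $\sigma_\xi=\sigma\,\mathbf{1}_{[z,\xi]}$; these are again $[0,\infty)$-valued, bounded and Borel-measurable. The plan is to first establish the identity
\[
W_{\mu_\xi,\sigma_\xi}(z,x)=W_{\mu,\sigma}\big(z,\pi_\xi(x)\big)=V^\xi(x)\qquad\text{for all }x\in[z,\infty),
\]
with $\pi_\xi$ the projection from \eqref{eq:proj}. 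Granting this, $V^\xi$ is continuous (it is the continuous function $V$ composed with the $1$-Lipschitz map $\pi_\xi$), so Theorem~\ref{thm:hjb} applies with $(\mu_\xi,\sigma_\xi)$ in place of $(\mu,\sigma)$ and shows in particular that $V^\xi$ is a viscosity subsolution of $(V^\xi_{x_1}-\mu_\xi)_+(V^\xi_{x_2}-\mu_\xi)_+=\sigma_\xi^2$ on $(z,\infty)$.

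For the identity I would argue with paths, using that $J_{\mu,\sigma}$ is parametrization-invariant, that $\ell_{\mu_\xi,\sigma_\xi}(w,p)=0$ whenever $w\notin[z,\xi]$ (both $\mu_\xi$ and $\sigma_\xi$ vanish there), and that $y\mapsto W_{\mu,\sigma}(z,y)$ is monotone non-decreasing (append a monotone path and use $\ell\geq 0$, exactly as in the proof of Proposition~\ref{prop:dpp}). For "$\leq$": given $\gamma\in\A$ from $z$ to $x$, monotonicity of $\gamma$ forces $\{t:\gamma(t)\in[z,\xi]\}$ to be an initial sub-interval $[0,t^*]$ with $\gamma(t^*)\leqq\pi_\xi(x)$, whence $J_{\mu_\xi,\sigma_\xi}(\gamma)=J_{\mu,\sigma}(\gamma|_{[0,t^*]})\leq W_{\mu,\sigma}(z,\gamma(t^*))\leq W_{\mu,\sigma}(z,\pi_\xi(x))$; take the supremum over $\gamma$. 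For "$\geq$": given a near-optimal $\gamma\in\A$ from $z$ to $\pi_\xi(x)$, it stays in $[z,\pi_\xi(x)]\subseteq[z,\xi]$, so $J_{\mu_\xi,\sigma_\xi}(\gamma)=J_{\mu,\sigma}(\gamma)$; concatenating $\gamma$ with the straight monotone segment from $\pi_\xi(x)$ to $x$ (which meets $[z,\xi]$ only at its starting point and therefore contributes nothing to $J_{\mu_\xi,\sigma_\xi}$) gives an admissible competitor for $W_{\mu_\xi,\sigma_\xi}(z,x)$ of the same energy, and we let the error tend to $0$. The only delicate point is the routine smoothing needed to keep the concatenated curve in $\A$, handled as in the dynamic programming arguments above.

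To finish, I would compare Hamiltonians. Writing $H(x,p)=(p_1-\mu(x))_+(p_2-\mu(x))_+-\sigma(x)^2$ and $H_\xi(x,p)=(p_1-\mu_\xi(x))_+(p_2-\mu_\xi(x))_+-\sigma_\xi(x)^2$, the pointwise bounds $0\leq\mu_\xi\leq\mu$ and $0\leq\sigma_\xi\leq\sigma$ together with the monotonicity of $t\mapsto(p_i-t)_+$ give $H(x,p)\leq H_\xi(x,p)$ for all $x,p$, hence $H_*(x,p)\leq H_{\xi*}(x,p)$ for the lower semicontinuous envelopes. Since $V^\xi$ is a viscosity subsolution of $H_\xi=0$, for every $x\in(z,\infty)$ and $p\in D^+V^\xi(x)$ we obtain $H_*(x,p)\leq H_{\xi*}(x,p)\leq 0$, i.e.\ $V^\xi$ is a viscosity subsolution of \eqref{eq:hjb-sub} with the original $\mu,\sigma$. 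As this holds for every $\xi\in(z,\infty)$, and $V$ itself is a viscosity solution of \eqref{eq:hjb-sub} directly by Theorem~\ref{thm:hjb}, we conclude that $V$ is a truncatable viscosity solution of \eqref{eq:hjb-sub}. I expect the main obstacle to be the path identity $W_{\mu_\xi,\sigma_\xi}(z,x)=W_{\mu,\sigma}(z,\pi_\xi(x))$—in particular the case bookkeeping of where a monotone path enters and leaves $[z,\xi]$ and the smoothing of concatenations—while the reduction to Theorem~\ref{thm:hjb} and the Hamiltonian comparison are then immediate.
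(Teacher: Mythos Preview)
Your proposal is correct and follows essentially the same approach as the paper: both introduce the truncated densities $\mu_\xi=\chi\mu$, $\sigma_\xi=\chi\sigma$ with $\chi=\mathbf{1}_{[z,\xi]}$, establish the identity $W_{\mu_\xi,\sigma_\xi}(z,\cdot)=V^\xi$ via a path argument, apply Theorem~\ref{thm:hjb} to $(\mu_\xi,\sigma_\xi)$, and then use the monotonicity of $t\mapsto(p_1-t)_+(p_2-t)_+$ together with $\sigma_\xi\leq\sigma$ to pass from the subsolution inequality for $H_\xi$ to that for $H$. Your write-up is slightly more explicit about the path bookkeeping and the lower-envelope comparison $H_*\leq H_{\xi*}$, but the substance is identical.
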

\begin{proof}
It follows from Theorem \ref{thm:hjb} that $V$ is a viscosity solution of \eqref{eq:hjb-sub}.  We need only show that $V$ is truncatable.   Let $\xi \in (z,\infty)$, let $\chi:[0,\infty)^2\to \{0,1\}$ denote the characteristic function of $[z,\xi]$, and set $\bar{V} = W_{\chi\cdot \mu,\chi\cdot \sigma}(z,\cdot)$.  
By the definition of $\bar{V}$ and $\chi$ we have $\bar{V}(x) = V(x)  = V^\xi(x)$ for any $x \in [z,\xi]$.  Let $x \in [z,\infty)\setminus [z,\xi]$, $\eps>0$, and let $\gamma \in \A$ with $\gamma(0)=z$, $\gamma(1)=x$ such that $\bar{V}(x) \leq J_{\chi \cdot \mu,\chi\cdot \sigma}(\gamma) + \eps$. 
Let $\gamma^1$ denote the portion of $\gamma$ inside $[z,\xi]$, let $\gamma^2$ denote the remaining portion of $\gamma$, and reparametrize $\gamma^1$ and $\gamma^2$ so that $\gamma^1,\gamma^2:[0,1]\to\R^2$.  Letting $y=\gamma^1(1) \in [z,\xi]$ we have
\[\bar{V}(x) \leq J_{\chi\cdot \mu,\chi\cdot\sigma}(\gamma^1) + J_{\chi\cdot \mu,\chi\cdot\sigma}(\gamma^2) + \eps = J_{\mu,\sigma}(\gamma^1) + \eps \leq V(y) + \eps.\]
Since $y \leqq x$ and $y \in [z,\xi]$, we also have $\bar{V}(x) \geq \bar{V}(y)=V(y)$.  It follows that
\[\bar{V}(x) = \sup_{y \in [z,\xi] \, : \, y \leqq x} V(y).\]
By continuity of $V$, the supremum above is attained, and the maximizing argument of $y$ is exactly $y=\pi_\xi(x)$---the projection of $x$ onto $[0,\xi]$.  Therefore we have $\bar{V}(x) = V(\pi_\xi(x))$. Since $x$ is arbitrary, we see that $\bar{V} = V \circ \pi_\xi = V^\xi$, the $\xi$-truncation of $V$.

 Since $V^\xi=V\circ \pi_\xi$ is continuous, it follows from Theorem \ref{thm:hjb} that $V^\xi$ is a viscosity solution of 
\[(V_{x_1} - \chi\mu)_+(V_{x_2} - \chi\mu)_+ \leq\chi\sigma^2 \ \ \text{on } (z,\infty).\]
Since $0 \leq \chi \leq 1$ and $t \mapsto (p_1-t)_+(p_2 - t)_+$ is monotone decreasing, it follows that $V^\xi$ is viscosity subsolution of \eqref{eq:hjb-sub}, which completes the proof.
\end{proof}
We now show that truncatability enjoys a useful $L^\infty$-stability property.
\begin{proposition}\label{prop:trunc}
Let $z \in \R^2_+$ and for each $k\geq 1$ suppose that $u_k \in C([z,\infty))$ is a truncatable viscosity solution of 
\begin{equation}\label{eq:hjb-k}
H_k(x,Du_k) \leq 0 \ \ \text{on } (z,\infty).
\end{equation}
If  $u_k \to u$ locally uniformly, for some $u \in C([z,\infty))$, then $u$ is a truncatable viscosity solution of
\begin{equation}\label{eq:limit-hjb}
\underbar{H}\,(x,Du) \leq 0 \ \ \text{on } (z,\infty),
\end{equation}
where
\[\underbar{H}\,(x,p) := \liminf_{\substack{k\to \infty \\ y \to x}} H_k(y,p).\]
\end{proposition}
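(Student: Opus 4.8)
**The plan is to prove the two assertions — that $u$ is a viscosity subsolution of \eqref{eq:limit-hjb}, and that $u$ is truncatable — in that order, using the standard stability theory for viscosity solutions adapted to the lower semicontinuous envelope $\underbar{H}$.**

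\emph{Subsolution property.} I would first show $u$ is a viscosity subsolution of $\underbar{H}(x,Du)\le 0$ on $(z,\infty)$. Fix $x_0\in(z,\infty)$ and $\phi\in C^1((z,\infty))$ such that $u-\phi$ has a strict local maximum at $x_0$ (we may always assume strictness by replacing $\phi$ with $\phi(x)+|x-x_0|^2$). Since $u_k\to u$ locally uniformly, a standard argument produces points $x_k\to x_0$ at which $u_k-\phi$ has a local maximum, so $D\phi(x_k)\in D^+u_k(x_k)$. By the subsolution property of $u_k$ for \eqref{eq:hjb-k} — read with the lower semicontinuous envelope $(H_k)_*$ — we get $(H_k)_*(x_k,D\phi(x_k))\le 0$, hence in particular we can choose $y_k$ with $|y_k-x_k|\le 1/k$ and $H_k(y_k,D\phi(x_k))\le 1/k$. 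Now $y_k\to x_0$ and $D\phi(x_k)\to D\phi(x_0)$, so the definition of $\underbar{H}$ as a $\liminf$ over $k\to\infty$, $y\to x$ gives $\underbar{H}(x_0,D\phi(x_0))\le \liminf_k H_k(y_k,D\phi(x_k))\le 0$. Since $\underbar{H}$ is already lower semicontinuous in $x$ by construction, $\underbar{H}_*=\underbar{H}$ and we are done. The one subtlety is making sure the continuity of $p\mapsto D\phi$ and the $\liminf$ structure line up: the cleanest route is to note $\underbar{H}(x_0,p_0)\le\liminf_{k} H_k(y_k,p_0)$ for \emph{any} sequence $y_k\to x_0$, and absorb the gradient perturbation $D\phi(x_k)\to D\phi(x_0)$ by first fixing $p_0=D\phi(x_0)$ and using, if needed, that we only need an inequality.

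\emph{Truncatability.} Fix $\xi\in(z,\infty)$ and set $u^\xi=u\circ\pi_\xi$. Since each $u_k$ is truncatable, $u_k^\xi=u_k\circ\pi_\xi$ is a viscosity subsolution of \eqref{eq:hjb-k}. Because $\pi_\xi$ is Lipschitz (in fact $1$-Lipschitz) and $u_k\to u$ locally uniformly, it follows that $u_k^\xi\to u^\xi$ locally uniformly on $[z,\infty)$. Now apply the subsolution part already proved, with $u_k$ replaced by $u_k^\xi$: we conclude that $u^\xi$ is a viscosity subsolution of $\underbar{H}(x,Du^\xi)\le 0$ on $(z,\infty)$. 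That is precisely the statement that the limit $u$ is truncatable. Note we also need $u^\xi\in C([z,\infty))$, which is immediate from continuity of $u$ and of $\pi_\xi$.

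\emph{Main obstacle.} The only genuinely delicate point is the interplay between the two limits — $k\to\infty$ and $y\to x$ — defining $\underbar{H}$, together with the moving gradients $D\phi(x_k)$. One must be careful not to claim a spurious ``diagonal'' statement. The safe bookkeeping is: from $(H_k)_*(x_k,D\phi(x_k))\le 0$ extract $y_k$ with $|y_k-x_k|\le 1/k$ and $H_k(y_k,D\phi(x_k))\le 1/k$; then observe that $(y_k,D\phi(x_k))\to(x_0,D\phi(x_0))$, so by the very definition of the $\liminf$ in $\underbar H$ — taken jointly over $k\to\infty$ and $y\to x_0$, with the gradient slot free to vary continuously — we obtain $\underbar H(x_0,D\phi(x_0))\le 0$. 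If one prefers to define $\underbar H$ with the gradient held fixed, one instead uses lower semicontinuity of each $H_k$ in $p$ (or just the subsolution inequality for the envelope) to reduce to $p_0=D\phi(x_0)$ exactly. Either way the argument is routine once the envelopes are handled correctly; no compactness beyond the local uniform convergence is needed, since everything is local in $(z,\infty)$.
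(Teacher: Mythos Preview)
Your proof is correct and follows essentially the same approach as the paper's: establish the subsolution property for $u$ by the standard stability argument for viscosity solutions, then apply that same stability to the truncations $u_k^\xi\to u^\xi$ to conclude truncatability. The paper's proof is identical in structure but simply cites the stability step as a standard result (Crandall--Ishii--Lions, Remark~6.3) rather than writing it out; your explicit treatment of the moving gradients $D\phi(x_k)\to D\phi(x_0)$ against the fixed-$p$ definition of $\underbar H$ is a detail the paper leaves implicit.
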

We should note that the $\liminf$ operation defining $\underbar{H}$ is taken jointly as $k\to \infty$ and $y \to x$. This is a standard operation in the theory of viscosity solutions (see~\cite[Section 6]{crandall1992}), and it can be written more precisely for a function $f:\O \to \R$ as
\[\liminf_{\substack{k\to \infty \\ y \to x}} f_k(y) = \lim_{j\to \infty} \inf\left\{f_k(x) \, : \, k \geq j, x \in \O \ \text{ and } |y-x|\leq \frac{1}{j} \right\}. \]
\begin{proof}
It is a standard result (see \cite[Remark~6.3]{crandall1992}) that $u$ is a viscosity solution of \eqref{eq:limit-hjb}. To see that $u$ is truncatable: Fix $\xi \in (z,\infty)$,  let $u^\xi$ be the $\xi$-truncation of $u$, and let $u^\xi_k$ be the $\xi$-truncation of $u_k$. Since $u_k$ is truncatable, we have that $u^\xi_k$ is a viscosity solution of \eqref{eq:hjb-k} for every $k$.  Furthermore, we have $u^\xi_k \to u^\xi$ locally uniformly, and therefore $u^\xi$ is a viscosity solution of \eqref{eq:limit-hjb}.  Thus  $u$ is truncatable.
\end{proof}
We now relax (H2) and allow $H$ to have discontinuous spatial dependence.  Given a set $\O \subset \R^d_+$ we assume $H$ satisfies
\begin{itemize}
\item[(H3)${}_\O$] There exists a modulus of continuity $m$ such that for all $\xi \in \O$ there exists $\eps_\xi > 0$ and $\v_\xi \in \S^{d-1}$ such that
\begin{equation}\label{eq:one-sided-cont}
H(y,p) - H(y+\eps\v,p) \leq m(|p|\eps + \eps)
\end{equation}
for all $p \in \R^d$, $y \in B_{\eps_\xi}(\xi)$, $\eps\in (0,\eps_\xi)$, and $\v \in \S^{d-1}$ with $|\v-\v_\xi| < \eps_\xi$.
\end{itemize}
This hypothesis is similar to one used by Deckelnick and Elliott~\cite{deckelnick2004} to prove uniqueness of viscosity solutions to Eikonal-type Hamilton-Jacobi equations with discontinuous spatial dependence.  It is also a generalization of the cone condition used in our previous work~\cite{calder2014}.

If we assume the subsolution is truncatable, then we can prove the following comparison principle, which holds for Hamiltonians $H$ with discontinuous spatial dependence.
\begin{theorem}\label{thm:comp-trunc}
Suppose that $H$ satisfies (H3)${}_\O$ for some $\O \subset \R^d_+$. Let $u \in C([0,\infty)^d)$ be a truncatable viscosity solution of \eqref{eq:u-sub} and  let $v \in C([0,\infty)^d)$ be a monotone  viscosity solution of \eqref{eq:v-super}.  Suppose that $u\leq v$ on $[0,\infty)^d \setminus \O$.
Then  $u \leq v$ on $\R^d_+$.
\end{theorem}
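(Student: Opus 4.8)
The plan is to argue by contradiction and to adapt the doubling-of-variables proof of Theorem~\ref{thm:comp}, replacing the symmetric penalization $\tfrac{\alpha}{2}|x-y|^2$ by a \emph{tilted} penalization that forces the two doubled variables to separate precisely along the distinguished cone direction supplied by (H3)${}_\O$. Suppose $u(x_0)>v(x_0)$ for some $x_0\in\R^d_+$. First I would carry out the same preliminary reductions as in Theorem~\ref{thm:comp}: for $h>0$ the function $v_h(x)=v(x)+h(x_1+\cdots+x_d)$ is again a monotone supersolution of \eqref{eq:v-super} by (H1), and $u(x_0)>v_h(x_0)$ for $h$ small; truncatability of $u$ lets us replace $u$ by its $\xi$-truncation $u^\xi$ for a box $[0,\xi]$ large enough to contain the compact region where $\bar u-v_h$ can be nonnegative (so the doubled maximizer below is trapped away from the outer faces $\{x_i=\xi_i\}$, where $u^\xi$ is flat), and composing with the cutoff $\Psi$ of \eqref{eq2:Psi-prop} produces a bounded-above subsolution $\bar u=c\Psi(c^{-1}u^\xi)$ of \eqref{eq:u-sub} (the subsolution property and truncatability both survive the $C^1$ change of variables, by the chain rule and (H1)). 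Choosing $c$ large makes $\delta:=\sup(\bar u-v_h)>0$; boundedness of $\bar u$ together with coercivity of $v_h$ forces this supremum to be attained at some $\hat x$, and since $\bar u\le u$ on the box while $u\le v\le v_h$ on $[0,\infty)^d\setminus\O$ and $\O\subset\R^d_+$, the point $\hat x$ must lie in $\O$ (hence interior, and off the faces). Invoking (H3)${}_\O$ at $\xi=\hat x$ yields $\eps_{\hat x}>0$ and $\v_{\hat x}\in\S^{d-1}$.

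Next comes the tilted doubling. For $\alpha>0$ let
\[
\Phi_\alpha(x,y)=\bar u(x)-v_h(y)-\frac{\alpha}{2}\Bigl|x-y-\tfrac{1}{\sqrt\alpha}\,\v_{\hat x}\Bigr|^2
\]
(a small pinning $-|x-\hat x|^2-|y-\hat x|^2$ may be added to localize at $\hat x$), and let $(x_\alpha,y_\alpha)$ maximize $\Phi_\alpha$. Comparing $M_\alpha=\Phi_\alpha(x_\alpha,y_\alpha)$ with $\Phi_\alpha(\hat x+\alpha^{-1/2}\v_{\hat x},\hat x)$ and using continuity of $\bar u,v_h$ gives the standard conclusions $x_\alpha,y_\alpha\to\hat x$, $M_\alpha\to\delta$, and — the crucial point — $\alpha\,|x_\alpha-y_\alpha-\alpha^{-1/2}\v_{\hat x}|^2\to0$. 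Writing $q_\alpha=x_\alpha-y_\alpha-\alpha^{-1/2}\v_{\hat x}$, so $|q_\alpha|=o(\alpha^{-1/2})$, the separation $x_\alpha-y_\alpha$ takes the form $\eps_\alpha\v_\alpha$ with $\eps_\alpha\to0^+$ and $\v_\alpha\to\v_{\hat x}$; hence, for $\alpha$ large, $\eps_\alpha<\eps_{\hat x}$, $|\v_\alpha-\v_{\hat x}|<\eps_{\hat x}$, and $x_\alpha,y_\alpha\in B_{\eps_{\hat x}}(\hat x)\cap\R^d_+$. With $p=\alpha q_\alpha$ one has $p\in D^+\bar u(x_\alpha)\cap D^-v_h(y_\alpha)$ up to $o(1)$ corrections from the pinning, absorbed via (H1) and continuity of $H$ in $p$, so the sub/supersolution inequalities read $H(x_\alpha,p)\le o(1)$ and $H(y_\alpha,p)\ge a-o(1)$. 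Since $x_\alpha=y_\alpha+\eps_\alpha\v_\alpha$, applying (H3)${}_\O$ at $\hat x$ with $y=y_\alpha$, $\eps=\eps_\alpha$, $\v=\v_\alpha$ yields
\[
a-o(1)\le H(y_\alpha,p)\le H(x_\alpha,p)+m\bigl(|p|\eps_\alpha+\eps_\alpha\bigr)\le m\bigl(|p|\eps_\alpha+\eps_\alpha\bigr)+o(1).
\]
But $|p|\eps_\alpha=\alpha|q_\alpha|\eps_\alpha\le\alpha|q_\alpha|\bigl(|q_\alpha|+\alpha^{-1/2}\bigr)=\alpha|q_\alpha|^2+\sqrt{\alpha|q_\alpha|^2}\to0$ and $\eps_\alpha\to0$, so the right-hand side tends to $m(0)=0$, contradicting $a>0$. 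Hence $u\le v_h$ on $\R^d_+$ for every $h>0$, and letting $h\to0^+$ finishes the proof.

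The step I expect to be the main obstacle is exactly the calibration underlying the last display: to apply the one-sided condition (H3)${}_\O$ one must separate $x_\alpha$ and $y_\alpha$ along a \emph{prescribed} direction near $\v_{\hat x}$, yet the cross-term $|p|\,|x_\alpha-y_\alpha|$ in the modulus can be controlled only if $x_\alpha$ and $y_\alpha$ collapse together. The tilt $\alpha^{-1/2}\v_{\hat x}$ threads this needle — it dominates the residual $|q_\alpha|=o(\alpha^{-1/2})$, so the separation direction still converges to $\v_{\hat x}$, while keeping $|p|\eps_\alpha\sim\sqrt{\alpha|q_\alpha|^2}\to0$. The remaining care is bookkeeping for the unbounded domain: truncatability is used to retreat to a box so that compactness is free and the doubled maximizer is trapped in the compact region where $v_h$ is small, while $\O\subset\R^d_+$ together with the hypothesis $u\le v$ on $[0,\infty)^d\setminus\O$ guarantees $\hat x\in\O$, so that (H3)${}_\O$ is genuinely available there. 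The argument is dimension-independent and, unlike Theorem~\ref{thm:comp}, does not use the global modulus (H2).
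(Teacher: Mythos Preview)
The tilted doubling and the endgame arithmetic --- writing $x_\alpha-y_\alpha=\eps_\alpha\v_\alpha$ with $\v_\alpha\to\v_{\hat x}$, then bounding $|p|\eps_\alpha$ by $\alpha|q_\alpha|^2+\sqrt{\alpha|q_\alpha|^2}\to0$ --- are exactly right and match the paper's proof. The gap is in your preliminary reductions.

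You invoke (H1) to make $v_h=v+h\sum_i x_i$ a supersolution and to push the cutoff $\Psi$ through the subsolution property. But Theorem~\ref{thm:comp-trunc} does \emph{not} assume (H1); its only structural hypothesis on $H$ is (H3)${}_\O$. Without (H1) neither reduction is valid, so your mechanism for producing coercivity and localizing the maximizer at a point of $\O$ collapses. (The $\Psi$ step is in fact redundant --- $u^\xi=u\circ\pi_\xi$ is already bounded since $u$ is continuous and $[0,\xi]$ is compact --- but the $v_h$ step is essential to your scheme and cannot be removed.)

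The paper obtains both compactness and localization without (H1), using truncatability and the monotonicity of $v$ much more sharply. It sets $R=\sup\{r:u\le v+\tfrac{\lambda}{2}\text{ on }D_r\}$ with $D_r=\{x_1+\cdots+x_d<r\}$, picks $\xi_0\in\partial D_R$ where $u(\xi_0)=v(\xi_0)+\tfrac{\lambda}{2}$ (so automatically $\xi_0\in\O$), and truncates at $\xi=\xi_0+t(1,\dots,1)$ with $t$ small enough that $\G\setminus D_R\subset B_{\eps_{\xi_0}}(\xi_0)$. Compactness of the maximizer of the tilted $\Phi_\alpha$ then comes from a projection trick: for any $(x,y)$ with $\Phi_\alpha(x,y)\ge0$ one constructs $(\hat x,\hat y)$ in a fixed compact box $\G_\alpha$ with $\Phi_\alpha(\hat x,\hat y)\ge\Phi_\alpha(x,y)$, using only that $u^\xi$ factors through $\pi_\xi$ and that $v$ is monotone. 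The limit point $x_0$ of $(x_\alpha,y_\alpha)$ then satisfies $u^\xi(x_0)-v(x_0)=\delta>\lambda/2$, forcing $x_0\in\G\setminus D_R\subset B_{\eps_{\xi_0}}(\xi_0)$, where (H3)${}_\O$ is available. No $v_h$, no $\Psi$, no pinning term, and hence no appeal to (H1).
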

The proof of Theorem \ref{thm:comp-trunc} is similar to \cite[Theorem 2.8]{calder2014}, so we postpone it to the appendix.

For the remainder of the section we set
\begin{equation}\label{eq:Hmine}
H(x,p) = (p_1 - \mu(x))_+(p_2 - \mu(x))_+ - \sigma^2(x).
\end{equation}
  Our aim now is to apply the comparison principles from Theorems \ref{thm:comp} and \ref{thm:comp-trunc} to obtain a comparison principle, and a perturbation result, for the Hamilton-Jacobi equation (P).  First we need to show that (H2) and (H3)${_\O}$ are satisfied by $H$ given in \eqref{eq:Hmine}.
\begin{proposition}\label{prop:Hcont}
Suppose that  $\mu,\sigma:[0,\infty)^2 \to [0,\infty)$, and let $H$ be given by \eqref{eq:Hmine}.   Then for any $x,y \in \R^2_+$ 
\begin{equation}\label{eq:Hcont}
H(y,p) - H(x,p) \leq 2|p|(\mu(x)-\mu(y))_+ + \sigma^2(x)-\sigma^2(y).
\end{equation}
\end{proposition}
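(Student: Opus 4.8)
The plan is to notice that the $\sigma^2$–terms in $H(y,p)-H(x,p)$ reproduce exactly the quantity $\sigma^2(x)-\sigma^2(y)$ appearing on the right-hand side of \eqref{eq:Hcont}, so the statement reduces to the purely algebraic inequality
\[
(p_1-\mu(y))_+(p_2-\mu(y))_+ \;-\; (p_1-\mu(x))_+(p_2-\mu(x))_+ \;\le\; 2|p|\,(\mu(x)-\mu(y))_+ ,
\]
which involves $\mu$ only through the two nonnegative numbers $\mu(x)$ and $\mu(y)$. I would encode this in the one-variable function $g(t)=(p_1-t)_+(p_2-t)_+$ on $[0,\infty)$, so that the left side equals $g(\mu(y))-g(\mu(x))$, and then establish two elementary properties of $g$: that it is non-increasing, and that it is Lipschitz with constant $|p_1|+|p_2|$.

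Both properties follow from a short case analysis on the signs of $p_1,p_2$. If $p_1\le 0$ or $p_2\le 0$, then one of the factors $(p_i-t)_+$ vanishes identically for $t\ge 0$, so $g\equiv 0$ and there is nothing to prove. Otherwise, assuming without loss of generality $0<p_1\le p_2$, one has $g(t)=(p_1-t)(p_2-t)$ on $[0,p_1]$ and $g(t)=0$ on $[p_1,\infty)$, so $g$ is continuous and piecewise $C^1$ with $g'(t)=2t-p_1-p_2$ for $t\in(0,p_1)$ and $g'(t)=0$ for $t>p_1$. On $[0,p_1]$ one checks $-(p_1+p_2)\le 2t-p_1-p_2\le p_1-p_2\le 0$, which gives simultaneously $g'\le 0$ (monotonicity) and $|g'|\le p_1+p_2$ (the Lipschitz bound).

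To conclude, I would split according to the sign of $\mu(x)-\mu(y)$. When $\mu(y)\ge\mu(x)$, monotonicity of $g$ gives $g(\mu(y))-g(\mu(x))\le 0$, which already matches the right-hand side since $(\mu(x)-\mu(y))_+=0$. When $\mu(x)>\mu(y)$, the Lipschitz estimate gives $g(\mu(y))-g(\mu(x))\le (|p_1|+|p_2|)(\mu(x)-\mu(y))\le \sqrt 2\,|p|\,(\mu(x)-\mu(y))_+\le 2|p|\,(\mu(x)-\mu(y))_+$, using $|p_1|+|p_2|\le\sqrt 2\,|p|$. Adding $\sigma^2(x)-\sigma^2(y)$ back to both sides then yields \eqref{eq:Hcont}. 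There is no genuine obstacle here; the only points requiring a little care are the reduction to $0<p_1\le p_2$ (the positive parts must be resolved before differentiating) and the observation that the Lipschitz constant $|p_1|+|p_2|$ of $g$ is dominated by $2|p|$.
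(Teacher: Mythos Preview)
Your proof is correct and follows essentially the same route as the paper: both reduce to studying the one-variable function $h(t)=(p_1-t)_+(p_2-t)_+$ and bound $h(\mu(y))-h(\mu(x))$ by $(p_1+p_2)(\mu(x)-\mu(y))_+\le 2|p|(\mu(x)-\mu(y))_+$. The only cosmetic difference is that the paper invokes convexity of $h$ (the tangent-line inequality at $\mu(y)$) to obtain the one-sided bound in a single stroke, whereas you split into the monotonicity case $\mu(y)\ge\mu(x)$ and the Lipschitz case $\mu(x)>\mu(y)$; the underlying derivative estimate $|h'|\le p_1+p_2$ on $[0,\min(p_1,p_2)]$ is the same in both arguments.
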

\begin{proof}
Let $p \in [0,\infty)^2$, and set $h(t)=(p_1-t)_+ (p_2-t)_+$ so that
\[H(x,p) = h(\mu(x)) - \sigma^2(x).\]
Suppose first that $\mu(y) < \min(p_1,p_2)$.  Since $h$ is convex, we have
\[h(\mu(x)) - h(\mu(y)) \geq h'(\mu(y)) (\mu(x)-\mu(y)) =  -(p_1 + p_2 - 2\mu(y))(\mu(x)-\mu(y)).\]
Since $p_1 + p_2 - 2\mu(y) \geq 0$ we have
\begin{align*}
h(\mu(y)) - h(\mu(x)) &\leq (p_1 + p_2 - 2\mu(y))(\mu(x)-\mu(y)) \\
&\leq (p_1 + p_2 - 2\mu(y))(\mu(x)-\mu(y))_+ \\
&\leq (p_1+p_2)(\mu(x)-\mu(y))_+.
\end{align*}
Therefore we have
\begin{equation}\label{eq:almost}
h(\mu(y)) - h(\mu(x)) \leq 2|p|(\mu(x)-\mu(y))_+.
\end{equation}
If $\mu(y) \geq \min(p_1,p_2)$ then we have $h(\mu(y)) = 0 \leq  h(\mu(x))$, and hence \eqref{eq:almost} holds.
\end{proof}
\begin{remark}\label{rem:H2}
It follows from Proposition \ref{prop:Hcont} that $H$ satisfies (H2) if  $\mu$ and $\sigma^2$ are globally Lipschitz continuous on $\R^2_+$.
\end{remark}
\begin{corollary}\label{cor:comp}
Suppose that $\mu$ and $\sigma^2$ are non-negative and globally Lipschitz continuous on $\R^2_+$.  Let $u\in \text{USC}([0,\infty)^2)$ be a viscosity solution of
\begin{equation}\label{eq:specific-sub}
(u_{x_1} - \mu)_+ (u_{x_2} - \mu)_+ \leq {\sigma}^2 \ \ \text{on } \R^2_+,
\end{equation}
and let $v \in \text{LSC}([0,\infty)^2)$ be a monotone viscosity solution of
\begin{equation}\label{eq:specific-super}
(v_{x_1} - \mu)_+ (v_{x_2} - \mu)_+ \geq {\sigma}^2 \ \ \text{on } \R^2_+.
\end{equation}
Furthermore, suppose that 
\begin{equation}\label{eq:zeros}
\big\{ x\in \R^2_+ \, : \, \mu(x)=0 \big\} \supset \big\{ x \in \R^2_+ \, : \, \sigma(x)=0\big\}.
\end{equation}
Then  $u \leq v$ on $\partial \R^2_+$ implies $u \leq v$ on $\R^2_+$.  
\end{corollary}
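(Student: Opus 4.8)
The plan is to reduce Corollary~\ref{cor:comp} to the comparison principle of Theorem~\ref{thm:comp} by perturbing the supersolution $v$ into a \emph{strict} one. Take $H(x,p)=(p_1-\mu(x))_+(p_2-\mu(x))_+-\sigma^2(x)$ as in \eqref{eq:Hmine}, so that $u$ is a viscosity subsolution of $H(x,Du)\le 0$ and $v$ is a monotone viscosity supersolution of $H(x,Dv)\ge 0$ on $\R^2_+$. Since $p\mapsto (p_1-\mu)_+(p_2-\mu)_+$ is coordinatewise non-decreasing, $H$ satisfies (H1); and since $\mu$ and $\sigma^2$ are globally Lipschitz on $\R^2_+$, $H$ satisfies (H2) by Remark~\ref{rem:H2}. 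Moreover, as $\mu$ and $\sigma$ are continuous, the upper/lower semicontinuous envelopes of $H$ in the spatial variable coincide with $H$ itself, so the viscosity (super)solution inequalities may be read off directly from $H$.

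For $\eps>0$ I would set $v_\eps(x)=v(x)+\eps(x_1+x_2)$. This is lower semicontinuous and monotone (a sum of $v$ and a monotone function), and since $x_1+x_2\ge 0$ on $[0,\infty)^2$ we have $u\le v\le v_\eps$ on $\partial\R^2_+$. The crux is to verify that $v_\eps$ is a monotone viscosity solution of $H(x,Dv_\eps)\ge \eps^2$ on $\R^2_+$. Fix $x\in\R^2_+$ and $p\in D^-v_\eps(x)$; then $q:=p-\eps(1,1)\in D^-v(x)$, and because $v$ is monotone the elements of $D^-v(x)$ have non-negative coordinates, so $q\geqq 0$. The supersolution property of $v$ gives $(q_1-\mu(x))_+(q_2-\mu(x))_+\ge\sigma^2(x)$. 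Now split on $\sigma(x)$: if $\sigma(x)>0$ then both factors are positive, so $q_i>\mu(x)\ge 0$, hence $q_i+\eps-\mu(x)>\eps>0$ and expanding gives $H(x,p)=(q_1-\mu(x)+\eps)(q_2-\mu(x)+\eps)-\sigma^2(x)\ge (q_1-\mu(x))(q_2-\mu(x))+\eps^2-\sigma^2(x)\ge\eps^2$; if $\sigma(x)=0$ then $\mu(x)=0$ by \eqref{eq:zeros}, so $H(x,p)=(q_1+\eps)(q_2+\eps)-0\ge\eps^2$ since $q\geqq 0$. Thus $v_\eps$ is a monotone supersolution of $H\ge\eps^2$.

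Applying Theorem~\ref{thm:comp} with $a=\eps^2>0$ then yields $u\le v_\eps$ on $\R^2_+$, and letting $\eps\to 0^+$ (so that $v_\eps(x)\to v(x)$ pointwise) gives $u\le v$ on $\R^2_+$. The only delicate step is the supersolution computation for $v_\eps$: it is precisely here that hypothesis \eqref{eq:zeros} enters, to control $H$ on the set $\{\sigma=0\}$ where the supersolution inequality for $v$ is vacuous, and where the monotonicity of $v$ is essential, to ensure $D^-v(x)\subset[0,\infty)^2$. Everything else is a direct appeal to Theorem~\ref{thm:comp} and Remark~\ref{rem:H2}.
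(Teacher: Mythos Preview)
Your proof is correct and follows essentially the same approach as the paper's: perturb $v$ by $\eps(x_1+x_2)$ (the paper writes $\sqrt{a}$ for your $\eps$), use the case split on $\sigma(x)$ together with \eqref{eq:zeros} and the monotonicity of $v$ to show the perturbation is a strict supersolution with gap $\eps^2$, then invoke Theorem~\ref{thm:comp} and let $\eps\to 0$. The only cosmetic difference is that the paper first isolates the claim $\min(v_{x_1},v_{x_2})\ge\mu$ in the viscosity sense before perturbing, whereas you fold that argument directly into the verification for $v_\eps$.
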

\begin{proof}
We claim that
\begin{equation}\label{eq:min}
\min(v_{x_1},v_{x_2}) \geq \mu \ \ \text{on } \R^2_+,
\end{equation}
in the viscosity sense.  To see this, let $x \in \R^2_+$ and let $p \in D^-v(x)$.  Then we have
\[(p_1 - \mu(x))_+ (p_2 - \mu(x))_+ \geq \sigma(x)^2.\]
If $\sigma(x)>0$, then we must have $\min(p_1,p_2)\geq \mu(x)$ as desired.  If $\sigma(x) = 0$, then by \eqref{eq:zeros} we have $\mu(x)=0$, and we have $\min(p_1,p_2)\geq 0=\mu(x)$ by virtue of the monotonicity of $v$.

Let $a >0$ and set $\bar{v}(x) = v(x) + \sqrt{a}(x_1 + x_2)$.  By \eqref{eq:specific-super} and \eqref{eq:min} we see that $\bar{v}$ is a viscosity solution of
\[(\bar{v}_{x_1} - \mu)_+ (\bar{v}_{x_2} - \mu)_+ \geq {\sigma}^2 + a  \ \ \ \text{on } \R^2_+.\]
By Proposition \ref{prop:Hcont} and Remark \ref{rem:H2} we see that (H1) and (H2) are satisfied.  Therefore we can apply Theorem \ref{thm:comp} to find that $u \leq \bar{v}$.  Sending $a \to 0$ completes the proof.
\end{proof}

Recall that $\mu$ and $\sigma^2$ are not independent functions in the DLPP problem, even though we have treated them as such for much of the analysis.  From this point on, we will need to recall their relationship, as it is important for proving uniqueness in (P). Specifically, we need to assume that $\mu$ and $\sigma^2$ satisfy (F3) for the \emph{same} choice of $\zeta$ at each $x \in \Gamma_i$. When this holds, we say that $\mu$ and $\sigma^2$ \emph{simultaneously} satisfy (F3).  Since $\sigma=\mu$ for exponential DLPP and $\sigma=\sqrt{\mu(1+\mu)}$ for geometric DLPP, $\sigma$ is always a monotone increasing function of $\mu$, and hence $\mu$ and $\sigma^2$ simultaneously satisfy (F3) in both cases. We recall that $\Omega$, $\Omega_i$, $\Gamma_i$, and (F1)--(F3) are defined in Section \ref{sec:results}, and that $\mu\equiv 0$ on $\Omega$.
\begin{proposition}\label{prop:hypo}
Let $\mu$ and $\sigma^2$ simultaneously satisfy (F1) and (F3). Then $H$ given by \eqref{eq:Hmine} satisfies (H3)${}_\O$ with $\O = \R^2_+ \setminus \bar{\Omega}$.
\end{proposition}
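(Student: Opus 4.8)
The plan is to check (H3)$_\O$ by hand, using Proposition \ref{prop:Hcont} to reduce it to a one‑sided variation estimate for $\mu$ and $\sigma^2$. By Proposition \ref{prop:Hcont}, for any $y\in\R^2_+$, $\v\in\S^1$ and $\eps>0$ with $y+\eps\v\in\R^2_+$,
\[
  H(y,p)-H(y+\eps\v,p)\ \le\ 2|p|\bigl(\mu(y+\eps\v)-\mu(y)\bigr)_+ + \bigl(\sigma^2(y+\eps\v)-\sigma^2(y)\bigr)
\]
for every $p\in\R^2$ (when some $p_i<0$ both sides reduce to the $\sigma^2$ term). Hence it suffices to produce, for each $\xi\in\O$, a radius $\eps_\xi>0$ and a direction $\v_\xi\in\S^1$ such that $(\mu(y+\eps\v)-\mu(y))_+\le C\eps$ and $\sigma^2(y+\eps\v)-\sigma^2(y)\le C\eps$ whenever $y\in B_{\eps_\xi}(\xi)$, $\eps\in(0,\eps_\xi)$ and $|\v-\v_\xi|<\eps_\xi$, with $C$ depending only on the Lipschitz constants in (F1) (recall $\sigma^2$ also satisfies (F1), with a possibly larger constant); this gives (H3)$_\O$ with the linear modulus $m(t)=(2C+1)t$. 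Since $\O=\R^2_+\setminus\bar\Omega\subset[0,\infty)^2\setminus\Omega$ and the curves $\{\Gamma_i\}$ cut $[0,\infty)^2\setminus\Omega$ into the regions $\{\Omega_i\}$, every $\xi\in\O$ either lies in the interior of some $\Omega_i$, or lies on exactly one curve $\Gamma_i$ (it cannot lie on $\Gamma$ or on $\partial\R^2_+$, as $\xi\in\R^2_+\setminus\bar\Omega$). If $\xi$ lies in the interior of $\Omega_i$, choose $\eps_\xi$ so small that $B_{2\eps_\xi}(\xi)\subset\Omega_i\cap\R^2_+$ (possible since $\Omega_i$ is open and $\bar\Omega,\partial\R^2_+$ are at positive distance); then $\mu$ and $\sigma^2$ are Lipschitz on $B_{2\eps_\xi}(\xi)$ by (F1), and the estimate holds for \emph{any} $\v_\xi$.

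The substance is the case $\xi\in\Gamma_i$. Here I would first use (F3) and the local finiteness of $\{\Gamma_j\}$ to shrink $\eps_\xi$ so that $B_{3\eps_\xi}(\xi)$ lies in $\R^2_+$, is disjoint from $\bar\Omega$, meets no $\Gamma_j$ with $j\ne i$, and so that the sign $\zeta$ in (F3) is constant on $B_{3\eps_\xi}(\xi)\cap\Gamma_i$ — simultaneously for $\mu$ and for $\sigma^2$, which is legitimate precisely because they satisfy (F3) with the same $\zeta$. After relabelling, let $\Omega^{+}$ denote whichever of $\Omega_{i-1},\Omega_i$ has the larger one‑sided limits of $\mu$ and $\sigma^2$ along $B_{3\eps_\xi}(\xi)\cap\Gamma_i$, and $\Omega^{-}$ the other. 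Inside $B_{3\eps_\xi}(\xi)$ the arc $\Gamma_i$ is a strictly increasing graph separating the ball into $\Omega^{+}$ and $\Omega^{-}$, one of which is its ``upper–left'' side and one its ``lower–right'' side; I would take $\v_\xi$ to be a unit vector pointing from $\Omega^{+}$ into $\Omega^{-}$, chosen in the open cone $\{v_1>0>v_2\}$ (if $\Omega^{-}$ is the lower–right side) or $\{v_1<0<v_2\}$ (if it is the upper–left side), e.g.\ $\v_\xi=\pm(1,-1)/\sqrt2$. The point where strict monotonicity of $\Gamma_i$ is essential — and cannot be weakened to non‑decreasing, cf.\ Remark \ref{rem:discont} — is that, after further shrinking $\eps_\xi$, every $\v$ with $|\v-\v_\xi|<\eps_\xi$ is strictly anti‑monotone, so any segment $[y,y+\eps\v]$ with $\eps<\eps_\xi$ meets $\Gamma_i$ in at most one point, and only in the direction from $\Omega^{+}$ to $\Omega^{-}$.

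With this choice, for $y,\eps,\v$ as in (H3)$_\O$ there are three possibilities for $[y,y+\eps\v]$: it stays in $\bar\Omega^{-}$; it stays in $\bar\Omega^{+}$; or it crosses $\Gamma_i$ once, at a point $w$, with $y\in\bar\Omega^{+}$ and $y+\eps\v\in\bar\Omega^{-}$. In the first two cases $\mu(y+\eps\v)-\mu(y)$ is controlled by the Lipschitz variation of $\mu$ on the relevant region and its closure, hence by $C_{lip}\eps$. In the third, I would write
\[
  \mu(y+\eps\v)-\mu(y)=\Bigl(\mu(y+\eps\v)-\lim_{\Omega^-\ni z\to w}\mu(z)\Bigr)+\Bigl(\lim_{\Omega^-\ni z\to w}\mu(z)-\lim_{\Omega^+\ni z\to w}\mu(z)\Bigr)+\Bigl(\lim_{\Omega^+\ni z\to w}\mu(z)-\mu(y)\Bigr),
\]
where the outer two terms are $\le C_{lip}\eps$ by the Lipschitz bound on $\Omega^{-}$ and $\Omega^{+}$, while the middle term is $\le 0$ by (F3) and the definition of $\Omega^{\pm}$. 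Thus $(\mu(y+\eps\v)-\mu(y))_+\le 2C_{lip}\eps$ in all cases, and the identical argument — with the \emph{same} $\v_\xi$, which is the crucial use of the simultaneity of (F3) for $\mu$ and $\sigma^2$ — gives $\sigma^2(y+\eps\v)-\sigma^2(y)\le 2C_{lip}'\eps$. Substituting into the inequality from Proposition \ref{prop:Hcont} proves (H3)$_\O$ with $m(t)=(4C_{lip}+2C_{lip}')t$. The main obstacle I anticipate is not analytic but geometric: carefully verifying, case by case on which side $\Omega^{+}$ sits and on the sign of $\zeta$, that $\v_\xi$ is at once transversal to $\Gamma_i$ and ``downhill'' for both $\mu$ and $\sigma^2$, and keeping the cone width, the ball radius and the (F3)‑neighbourhood mutually consistent so that the resulting modulus $m$ is genuinely independent of $\xi$.
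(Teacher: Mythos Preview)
Your proposal is correct and follows essentially the same approach as the paper's proof: both reduce the question via Proposition~\ref{prop:Hcont} to a one-sided variation bound on $\mu$ and $\sigma^2$, handle interior points of $\Omega_i$ trivially by Lipschitz continuity, and at points of $\Gamma_i$ choose $\v_\xi=\pm(1,-1)/\sqrt2$ according to the sign $\zeta$ from (F3), using the strict monotonicity of $\Gamma_i$ to ensure the segment $[y,y+\eps\v]$ crosses $\Gamma_i$ at most once and only in the ``downhill'' direction, then decompose the increment through the crossing point. Your write-up is in fact somewhat more careful than the paper's in a couple of respects --- you note explicitly that the case $p_i<0$ is harmless, and you enumerate the three crossing scenarios separately --- but the underlying argument is identical.
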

\begin{proof}
Let $\xi \in \O$.   If $\xi \in \Omega_i$, then we can choose $\eps_\xi$ small enough so that $B_{2\eps_\xi}(\xi) \subset \Omega_i$.  By Proposition \ref{prop:Hcont} we see that any choice for $\v_\xi$ will suffice since $\mu$ and $\sigma^2$ are Lipschitz with constant $C_{lip}$ when restricted to $\Omega_i$.  

If $\xi \in \Gamma_i$ for some $i$, then let $\zeta$ be as given in (F3).  Assume for now that $\zeta=-1$, and set $\v_\xi = (1,-1)/\sqrt{2}$.  Let $\eps_\xi>0$ be less than half the value of $\eps$ from (F3), and then choose $\eps_\xi>0$ smaller, if necessary, so that $B_{2\eps_\xi}(\xi)$ has an empty intersection with $\Gamma$ and all other $\Gamma_j$, and $\eps_{\xi}\leq 1/2$.  Let $\mu_i$ and $\sigma^2_i$ denote the Lipschitz extensions of $\mu\vert_{\Omega_i}$ and $\sigma^2\vert_{\Omega_i}$ to $\bar{\Omega_i}$, respectively, and make the same definitions for $\mu_{i-1}$ and $\sigma^2_{i-1}$.  Then (F3) implies that $\mu_i \geq \mu_{i-1}$ and $\sigma^2_i \geq \sigma^2_{i-1}$ on $B_{2\eps_\xi}(\xi) \cap \Gamma_i$.  Furthermore, since $\mu$ and $\sigma^2$ are upper semicontinuous, we have $\mu=\mu_i$ and $\sigma=\sigma_i$ on $B_{2\eps_\xi}(\xi) \cap \Gamma_i$.  

Let $y \in B_{\eps_\xi}(\xi)$, $\eps < \eps_\xi$, $p \in \R^2$, and $\v \in \S^{d-1}$ with $|\v-\v_\xi| < \eps_\xi$. If $y+\eps\v \in \bar{\Omega_{i}}$, then since $\Gamma_i$ is monotone, $|\v-\v_\xi|\leq \frac{1}{2}$, and $y\in B_{2\eps_\xi}(\xi)$, we must have that $y\in \Omega_{i}$.  Since $\mu_i$ and $\sigma_i^2$ are Lipschitz on $\bar{\Omega_{i}}\cap B_{2\eps_\xi}(\xi)$, we can invoke Proposition \ref{prop:Hcont} to show that  (H3)${_\O}$ holds.  

Now suppose that $y + \eps\v \in \Omega_{i-1}$.  If $y \in \Omega_{i-1}$, then (H3)${_\O}$ holds as before, so assume that $y \in \bar{\Omega_{i}}$. Let $\eps'>0$ such that $y + \eps'\v \in \Gamma_i$. Then we have 
\begin{align*}
  \mu(y + \eps\v) - \mu(y) &= \mu_{i-1}(y + \eps\v) - \mu_i(y + \eps'\v) + \mu_i(y + \eps'\v) - \mu_i(y) \\
  &\leq \mu_{i-1}(y + \eps\v) - \mu_{i-1}(y + \eps'\v)  + \mu_i(y + \eps'\v) - \mu_i(y) \\
  &\leq 2C_{lip} \eps,
\end{align*}
where we used the fact that $\mu_i \geq \mu_{i-1}$ on $\Gamma_i \cap B_{2\eps_\xi}(\xi)$.  We have an identical estimate for $\sigma^2$, and the proof is completed by invoking Proposition \ref{prop:Hcont}.
\end{proof}
\begin{corollary}\label{cor:comp-trunc}
Let $\mu$ and $\sigma^2$ simultaneously satisfy (F1) and (F3). Let $u\in C([0,\infty)^2)$ be a truncatable viscosity solution of \eqref{eq:specific-sub}, let $v \in C([0,\infty)^2)$ be a monotone viscosity solution of \eqref{eq:specific-super}, and suppose that \eqref{eq:zeros} holds.
Then $u \leq v$ on $\Omega \cup \partial \R^2_+$ implies $u\leq v$ on $\R^2_+$.
\end{corollary}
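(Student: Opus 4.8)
The plan is to deduce this from Theorem~\ref{thm:comp-trunc} applied to the Hamiltonian $H(x,p) = (p_1-\mu(x))_+(p_2-\mu(x))_+ - \sigma^2(x)$ of \eqref{eq:Hmine}, with $\O = \R^2_+\setminus\bar{\Omega}$. Two observations make the setup immediate. First, by Proposition~\ref{prop:hypo}, since $\mu$ and $\sigma^2$ simultaneously satisfy (F1) and (F3), this $H$ satisfies (H3)${}_\O$ for exactly this $\O$ (it also trivially satisfies (H1)). Second, $u$ being a truncatable viscosity solution of \eqref{eq:specific-sub} is literally the statement that $u$ is a truncatable viscosity subsolution of $H(x,Du)\le 0$, so nothing is needed there.

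The substantive step is to upgrade $v$ to the \emph{strict} supersolution demanded by \eqref{eq:v-super}, and I would carry this out exactly as in the proof of Corollary~\ref{cor:comp}. Using \eqref{eq:zeros} together with the monotonicity of $v$, one first checks that $v$ is a viscosity solution of $\min(v_{x_1},v_{x_2})\ge\mu$ on $\R^2_+$: if $p\in D^-v(x)$ and $\sigma(x)>0$ then $(p_1-\mu(x))_+(p_2-\mu(x))_+\ge\sigma(x)^2>0$ forces $p_1,p_2\ge\mu(x)$, while if $\sigma(x)=0$ then $\mu(x)=0$ by \eqref{eq:zeros} and $p\geqq 0$ by monotonicity. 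Hence, for each $a>0$, the function $\bar{v}(x):=v(x)+\sqrt{a}\,(x_1+x_2)$ is continuous, monotone, and (writing any $q\in D^-\bar{v}(x)$ as $q=p+\sqrt{a}(1,1)$ with $p\in D^-v(x)$ and $p\geqq\mu(x)(1,1)$, so $q_i-\mu(x)\ge\sqrt{a}$) a viscosity solution of $(\bar{v}_{x_1}-\mu)_+(\bar{v}_{x_2}-\mu)_+\ge\sigma^2+a$, i.e.\ of $H(x,D\bar{v})\ge a$ on $\R^2_+$.

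It then remains only to verify the boundary hypothesis of Theorem~\ref{thm:comp-trunc}, namely $u\le\bar{v}$ on $[0,\infty)^2\setminus\O=\bar{\Omega}\cup\partial\R^2_+$. We are given $u\le v$ on $\Omega\cup\partial\R^2_+$; since $u,v$ are continuous and $\Omega$ is dense in $\bar{\Omega}$, this passes to $\bar{\Omega}\cup\partial\R^2_+$, where moreover $u\le v\le\bar{v}$ because $\sqrt{a}\,(x_1+x_2)\ge 0$. Theorem~\ref{thm:comp-trunc} now yields $u\le\bar{v}$ on $\R^2_+$, and sending $a\to 0^+$ gives $u\le v$ on $\R^2_+$. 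There is no genuine obstacle in this argument --- all the work is in Proposition~\ref{prop:hypo} and in the appendix proof of Theorem~\ref{thm:comp-trunc} --- and the only point needing care is the perturbation step, where \eqref{eq:zeros} is precisely what guarantees that $\bar{v}$ remains a supersolution across the degenerate set $\{\sigma=0\}$.
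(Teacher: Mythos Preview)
Your proposal is correct and follows exactly the approach the paper intends: the paper merely states that the proof is ``similar to Corollary~\ref{cor:comp}'', and what you have written is precisely that argument---perturb $v$ to $\bar v = v + \sqrt{a}(x_1+x_2)$ using \eqref{eq:zeros}, invoke Proposition~\ref{prop:hypo} to obtain (H3)${}_\O$ with $\O=\R^2_+\setminus\bar\Omega$, and apply Theorem~\ref{thm:comp-trunc} in place of Theorem~\ref{thm:comp}. The only minor point to be careful about is that here $\mu,\sigma$ may be discontinuous, so the supersolution inequality for $v$ involves $H^*$ rather than $H$ evaluated pointwise; but since $\sigma$ is a monotone function of $\mu$ in both the exponential and geometric cases, the case analysis you sketch goes through with lower semicontinuous envelopes in place of point values.
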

The proof of Corollary \ref{cor:comp-trunc} is similar to Corollary \ref{cor:comp}.

We now prove an important perturbation result.  Roughly speaking, it says that if we smooth out the macroscopic mean $\mu$ and variance $\sigma$ (i.e., remove the discontinuities), then the resulting change in the  value function $W$ is uniformly small. This result is used in the proof of our main result, Theorem \ref{thm:main}. The proof relies on the uniqueness of truncatable viscosity solutions of (P) (Theorem \ref{thm:comp-trunc} and Corollary \ref{cor:comp-trunc}), and the result can then be used to prove a comparison principle for (P) without the truncatability assumption (see Theorem \ref{thm:final-comp}).  
\begin{theorem}\label{thm:perturbation}
Let $\mu$ and $\sigma^2$ satisfy \eqref{eq:zeros} and simultaneously satisfy (F1), (F3).  Let $\mu_k,\sigma^2_k \in C^{0,1}([0,\infty)^2)$ satisfy (F1*) with $\theta = \frac{1}{k}$.  Furthermore suppose that
\begin{equation}\label{eq:gamma-mu}
\mu_*(x) \leq \liminf_{\substack{k \to \infty \\ y\to x}} \mu_k(y), \ \ \mu^*(x) \geq \limsup_{\substack{k \to \infty \\ y \to x}}\mu_k(y),
\end{equation} 
and
\begin{equation}\label{eq:gamma-sigma}
\sigma_*(x) \leq \liminf_{\substack{k \to \infty \\ y\to x}} \sigma_k(y), \ \ \sigma^*(x) \geq \limsup_{\substack{k \to \infty \\ y \to x}}\sigma_k(y),
\end{equation} 
for all $x \in \R^2_+$.   Then for every $z \in [0,\infty)^2$ we have 
\[W_{\mu_k,\sigma_k}(z,\cdot)\longrightarrow W_{\mu,\sigma}(z,\cdot) \ \  \text{locally uniformly on} \ [z,\infty).\]
\end{theorem}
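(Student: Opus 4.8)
The strategy is a stability argument combined with the comparison principle for (P). By the translation argument of Section~\ref{sec:comparison} it suffices to treat $z=0$; write $V_k:=W_{\mu_k,\sigma_k}(0,\cdot)$ and $V:=W_{\mu,\sigma}(0,\cdot)$. Each $V_k$ is continuous by the classical regularity theory for value functions, since $\mu_k$ and $\sigma_k$ are continuous, and since $\mu_k$ satisfies (F1*) with $\theta=1/k$ \emph{relative to the fixed curves} $\Gamma,\{\Gamma_i\}_{i\in\Z}$ (with $C_{lip}$ common to all $k$, as in the intended application), Theorem~\ref{thm:reg} gives
\[|V_k(x)-V_k(y)| \le C\bigl(\sqrt{|x-y|} + \omega(|x-y|) + \omega(1/k)\bigr), \qquad x,y\in[0,R]^2,\]
where $\omega$ depends only on $\Gamma,\{\Gamma_i\}_{i\in\Z}$ and $R$, and $C=C(C_{lip},\|\mu_k\|_{L^\infty([0,R]^2)},\|\sigma_k\|_{L^\infty([0,R]^2)},R)$. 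A covering argument based on \eqref{eq:gamma-mu}--\eqref{eq:gamma-sigma} shows that $\sup_k\|\mu_k\|_{L^\infty([0,R]^2)}$ and $\sup_k\|\sigma_k\|_{L^\infty([0,R]^2)}$ are finite, so $C$ is uniform in $k$; because $\omega(1/k)\to0$ and each of the finitely many remaining $V_k$ is individually continuous, the family $\{V_k\}$ is equicontinuous on every $[0,R]^2$, and it is clearly locally uniformly bounded. By Arzel\`a--Ascoli it is precompact in $C_{\mathrm{loc}}([0,\infty)^2)$, so it suffices to prove that every locally uniform subsequential limit $\bar V$ of $(V_k)$ equals $V$.

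Fix such a limit $V_{k_j}\to\bar V$. By Theorem~\ref{thm:hjb} and Proposition~\ref{prop:Utrunc}, each $V_{k_j}$ is a monotone, continuous, truncatable viscosity solution of $(V_{k_j,x_1}-\mu_{k_j})_+(V_{k_j,x_2}-\mu_{k_j})_+=\sigma_{k_j}^2$ on $\R^2_+$, and monotonicity passes to $\bar V$. I would next show that $\bar V$ is a truncatable viscosity \emph{solution} of (P) built from the limiting $\mu,\sigma$. For the subsolution direction, Proposition~\ref{prop:trunc} already gives that $\bar V$ is a truncatable subsolution of $\underline H(x,D\bar V)\le0$, where $\underline H(x,p)=\liminf_{j\to\infty,\,y\to x}\bigl[(p_1-\mu_{k_j}(y))_+(p_2-\mu_{k_j}(y))_+-\sigma_{k_j}(y)^2\bigr]$; writing $H(x,p)=(p_1-\mu(x))_+(p_2-\mu(x))_+-\sigma^2(x)$, it then suffices to verify $H_*(x,p)\le\underline H(x,p)$ for every $x\in\R^2_+$ and $p\geqq0$ (which covers $D^+\bar V(x)$ by monotonicity), together with the symmetric inequality $H^*(x,p)\ge\overline H(x,p)$ for the supersolution direction (where $\overline H$ is the corresponding joint $\limsup$), which follows from the standard stability of supersolutions under uniform limits. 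Both inequalities are trivial at points off $\Gamma\cup\bigcup_i\Gamma_i$, where (F1) makes $\mu$ and $\sigma$ continuous, so $\mu_*=\mu=\mu^*$ there and $\mu_{k_j}\to\mu$ pointwise by \eqref{eq:gamma-mu} (and likewise for $\sigma$). At a point $x\in\Gamma_i$, hypothesis (F3) applied \emph{simultaneously} to $\mu$ and $\sigma^2$ says that one side of $\Gamma_i$ dominates the other for \emph{both} quantities; since $t\mapsto(p_1-t)_+(p_2-t)_+$ and $t\mapsto -t^2$ are nonincreasing, $H_*(x,p)$ and $H^*(x,p)$ are realized along the two one-sided limits at $x$, whereas $\underline H(x,p)$ and $\overline H(x,p)$ have the form $(p_1-m)_+(p_2-m)_+-s^2$ with $m\in[\mu_*(x),\mu^*(x)]$ and $s\in[\sigma_*(x),\sigma^*(x)]$ by \eqref{eq:gamma-mu}--\eqref{eq:gamma-sigma}; monotonicity then sandwiches $\underline H(x,p)$ and $\overline H(x,p)$ between the two one-sided values, yielding both inequalities. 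Hence $\bar V$ is a truncatable subsolution and a monotone supersolution of (P).

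It remains to match $\bar V$ with $V$ on $\Omega\cup\partial\R^2_+$ and conclude. On $\partial\R^2_+$ both $\bar V$ and $V$ are one-dimensional integrals of $\mu$ by \eqref{eq:Wbc1}--\eqref{eq:Wbc2}; because each $\Gamma_i$ is \emph{strictly} increasing and $\Gamma$ strictly decreasing, the discontinuity set of $\mu$ meets each coordinate axis in an at most countable set, so $\mu_{k_j}\to\mu$ almost everywhere on $\partial\R^2_+$ (using \eqref{eq:gamma-mu} at continuity points and the local finiteness of $\{\Gamma_i\}$), and bounded convergence gives $V_{k_j}\to V$ there. On $\Omega$, the inclusion \eqref{eq:zeros} together with $\mu\vert_\Omega=0$ forces $V\equiv0$ (any monotone path joining two points of $\Omega$ stays in $\Omega$, as $\Omega$ is downward closed for $\leqq$), and the same vanishing holds in the limit for the $V_{k_j}$; thus $\bar V=V$ on $\Omega\cup\partial\R^2_+$. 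Since $V$ itself is a monotone truncatable viscosity solution of (P) by Theorem~\ref{thm:hjb} and Proposition~\ref{prop:Utrunc}, Corollary~\ref{cor:comp-trunc} applied with $(u,v)=(\bar V,V)$ and then with $(u,v)=(V,\bar V)$ gives $\bar V=V$ on $\R^2_+$; hence every subsequential limit equals $V$, and precompactness upgrades this to $W_{\mu_k,\sigma_k}(0,\cdot)\to W_{\mu,\sigma}(0,\cdot)$ locally uniformly, which is the assertion. The main obstacle is the middle paragraph: correctly reconciling the half-relaxed limits $\underline H,\overline H$ of the discontinuous but convergent approximating Hamiltonians with the lower and upper envelopes $H_*,H^*$ of the limiting (discontinuous) Hamiltonian $H$ — which is exactly what the simultaneous form of (F3), the structural monotonicity of $(p_1-\cdot)_+(p_2-\cdot)_+$, and \eqref{eq:zeros} are there to supply.
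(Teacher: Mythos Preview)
Your argument follows essentially the same strategy as the paper's proof: establish equicontinuity of the $V_k$ via Theorem~\ref{thm:reg} with $\theta=1/k$, extract a subsequential limit by Arzel\`a--Ascoli, show this limit is a monotone truncatable viscosity solution of (P) via the stability result Proposition~\ref{prop:trunc}, verify it agrees with $V$ on $\Omega\cup\partial\R^2_+$, and conclude by applying Corollary~\ref{cor:comp-trunc} in both directions. The paper is terser in the stability step --- it simply cites Proposition~\ref{prop:trunc} and ``classical results'' to pass to the limit in the PDE --- whereas you spell out the envelope comparison $H_*\le\underline H$ and $\overline H\le H^*$ explicitly. Your sandwich argument, using the simultaneous form of (F3) together with the monotonicity of $(m,s)\mapsto(p_1-m)_+(p_2-m)_+-s^2$, is correct and in fact clarifies why (F3) must be assumed \emph{simultaneously} for $\mu$ and $\sigma^2$.

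One small correction: your justification that $V\equiv0$ on $\Omega$ invokes \eqref{eq:zeros} to deduce $\sigma\vert_\Omega=0$ from $\mu\vert_\Omega=0$, but \eqref{eq:zeros} reads $\{\sigma=0\}\subset\{\mu=0\}$, which is the reverse implication. The correct reason is that $\sigma^2$ itself satisfies (F1) by hypothesis, so $\sigma^2\vert_\Omega=0$ directly; the same applies to $\sigma_k^2$ via (F1*), giving $V_k\equiv 0$ on $\Omega_{1/k}$ and hence $\bar V\equiv 0$ on $\Omega$ in the limit. This does not affect the overall validity of your proof.
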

\begin{proof}
For simplicity, let us set $V_k(x) = W_{\mu_k,\sigma_k}(z,x)$ and $V(x)=W_{\mu,\sigma}(z,x)$ for $x \in [z,\infty)$.
Since $\mu_k,\sigma^2_k \in C^{0,1}([0,\infty)^2)$, we can apply Theorem \ref{thm:reg} with $\theta = 0$ to find that $V_k$ is continuous on $[z,\infty)$. We can apply Theorem \ref{thm:reg} again with $\theta = 1/k$ to show that for every $R>\max(z_1,z_2)$, there exists $C=C(C_{lip},\|\mu\|_\infty,\|\sigma\|_\infty,R)$ and a modulus of continuity $\omega$  such that
\begin{equation}\label{eq:regeps}
|V_k(x) - V_k(y)| \leq C(\sqrt{|x-y|} + \omega(|x-y|) + \omega(k^{-1})) 
\end{equation}
for all $x,y \in [z_1,R]\times[z_2,R]$. This approximate H\"older estimate is sufficient to apply a slightly modified version of the Arzel\`a-Ascoli theorem (see, for instance,~\cite[Theorem 2]{calder2013b}).  Therefore, by passing to a subsequence if necessary, there exists $v \in C([z,\infty))$ such that $V_k \to v$ locally uniformly on $[z,\infty)$.    By Proposition \ref{prop:Utrunc}, $V_k$ is a monotone truncatable viscosity solution of
\begin{equation}\label{eq:viscosity-eps}
(V_{k,x_1} - \mu_k)_+ (V_{k,x_2} - \mu_k)_+ = {\sigma_k}^2  \ \ \text{on } (z,\infty).
\end{equation}
Since $V_k \to v$ locally uniformly and \eqref{eq:gamma-mu}-\eqref{eq:gamma-sigma} hold, we can apply Proposition \ref{prop:trunc}, and classical results from the theory of viscosity solutions~\cite{crandall1992}, to find that $v$ is a monotone truncatable viscosity solution of
\begin{equation}\label{eq:viscosity}
(v_{x_1} - \mu)_+ (v_{x_2} - \mu)_+ = {\sigma}^2 \ \ \text{on } (z,\infty).
\end{equation}

We claim that $v=V$ on $\partial (z,\infty)$.  To see this: Let $x \in \partial (z,\infty)$, hence $x_i=z_i$ for some $i$.  Without loss of generality, assume that $x_1=z_1$.  
Then by \eqref{eq:Wbc1} and Fatou's lemma we have
\begin{align*}
v(x)=\lim_{k\to\infty}V_k(x) &= \lim_{k\to \infty} \int_{z_2}^{x_2} \mu_k(z_1,t) \, dt \\
&\leq \int_{z_2}^{x_2} \limsup_{k\to\infty} \mu_k(z_1,t) \, dt\\
&\leq \int_{z_2}^{x_2} \mu(z_1,t) \, dt= V(x),
\end{align*}
where the last line follows from \eqref{eq:gamma-mu} and the fact that $\mu$ is upper semicontinuous.  By a similar argument with Fatou's lemma we have
\begin{equation}\label{eq:fatou}
  v(x) \geq \int_{z_2}^{x_2} \mu_*(z_1,t) \, dt.
\end{equation}
Notice that (F1) implies that $\mu_*=\mu$ on $\Omega_i$ for all $i$ and on $\Omega$.  Hence, all the points $x\in [0,\infty)^2$ for which $\mu_*(x)\neq \mu(x)$ are contained in $\cup_{i \in \Z} \Gamma_i\cup\Gamma$.  Since the curves $\Gamma_i$ are strictly increasing and $\Gamma$ is strictly decreasing, the curve $t \mapsto (z_1,t)$ for $t\in [z_2,x_2]$ has a finite number of intersections with $ \cup_{i \in \Z} \Gamma_i\cup\Gamma$.  It follows that
\[v(x) \stackrel{\eqref{eq:fatou}}{\geq}\int_{z_2}^{x_2} \mu_*(z_1,t) \, dt = \int_{z_2}^{x_2} \mu(z_1,t) \, dt = V(x),\]
and hence $v(x)=V(x)$, which establishes the claim.

By Proposition \ref{prop:hypo}, $H$ given by \eqref{eq:Hmine} satisfies (H3)${}_\O$ for $\O = \R^2_+ \setminus{\bar{\Omega}}$.  By (F1*) and \eqref{eq:vardef} we have $V_k(x) = 0$ for $x \in \bar{\Omega_\theta}\cap[z,\infty)$, and hence $v(x)=0$ for $x \in \bar{\Omega}\cap[z,\infty)$.  Similarly, we have that $V(x)=0$ for $x \in \bar{\Omega} \cap [z,\infty)$.  It follows that $v = V$ on $[z,\infty) \setminus \O$, and by applying  a translated form of Corollary \ref{cor:comp-trunc} to find that $v = V$ on $[z,\infty)^2$.
\end{proof}
\begin{remark}\label{rem:conv}
Sequences generated by inf-~and sup-convolutions of $\mu$ and $\sigma^2$ satisfy the hypotheses of Theorem \ref{thm:perturbation}. Recall that the sup-convolution of $\mu:[0,\infty)^2 \to \R$ is defined by
\begin{equation}\label{eq:sup-convolution}
\mu^k(x) = \sup_{y \in [0,\infty)^2} \Big\{ \mu(y) - k |x-y| \Big\},
\end{equation}
and the inf-convolution by $\mu_k:= -(-\mu)^k$.  
\end{remark}
\begin{corollary}\label{cor:perturbation}
Let $\mu$ and $\sigma^2$ simultaneously  satisfy (F1), (F3) and \eqref{eq:zeros}, let $\mu_k,\sigma^2_k \in C^{0,1}([0,\infty)^2)$ satisfy (F1*) with $\theta = \frac{1}{k}$, and let $\mu_s$ satisfy (F2). If   \eqref{eq:gamma-mu}--\eqref{eq:gamma-sigma} hold for all $x \in \R^2_+$ then
\[U_{\mu_k+\mu_s,\sigma_k} \longrightarrow U_{\mu+\mu_s,\sigma} \ \ \text{locally uniformly on } [0,\infty)^2.\]
\end{corollary}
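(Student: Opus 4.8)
The plan is to reduce the statement to Theorem~\ref{thm:perturbation} by means of the dynamic programming principle of Proposition~\ref{prop:dpp}, which is precisely the tool that decouples the boundary source $\mu_s$ from the bulk inhomogeneities. Write $U_k = U_{\mu_k+\mu_s,\sigma_k}$, $U = U_{\mu+\mu_s,\sigma}$, $W_k = W_{\mu_k,\sigma_k}$, $W = W_{\mu,\sigma}$; all of $U_k,U$ are continuous by Corollary~\ref{cor:Ureg} (applied with $\theta=1/k$ and $\theta=0$ respectively). Applying Proposition~\ref{prop:dpp} to $\mu_k$ (which satisfies (F1*) with $\theta=1/k$) and to $\mu$ (which satisfies (F1), hence (F1*) with $\theta=0$) gives, for every $y\in[0,\infty)^2$,
\begin{align*}
U_k(y) &= \max_{x \in \partial \R^2_+\,:\, x \leqq y}\bigl\{U_k(x) + W_k(x,y)\bigr\}, \\
U(y) &= \max_{x \in \partial \R^2_+\,:\, x \leqq y}\bigl\{U(x) + W(x,y)\bigr\}.
\end{align*}
Fix $R>0$; for $y\in[0,R]^2$ every competitor $x$ lies in the fixed compact set $S_R := (\partial \R^2_+)\cap[0,R]^2 = (\{0\}\times[0,R])\cup([0,R]\times\{0\})$. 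Since $\bigl|\max_x f_k(x,y) - \max_x f(x,y)\bigr| \le \sup_{x\in S_R}\bigl|f_k(x,y)-f(x,y)\bigr|$, it suffices to prove that
\[
U_k(x) + W_k(x,y) \;\longrightarrow\; U(x) + W(x,y)
\]
uniformly over the compact set $\{(x,y)\in S_R\times[0,R]^2 \,:\, x\leqq y\}$.

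For the bulk term, Theorem~\ref{thm:perturbation} (whose hypotheses are exactly those imposed here, except that it does not involve $\mu_s$) gives $W_k(z,\cdot)\to W(z,\cdot)$ locally uniformly on $[z,\infty)$ for every fixed $z$, and in particular the pointwise convergence $W_k(x,y)\to W(x,y)$ on $\{x\leqq y\}$. By Theorem~\ref{thm:reg} and Corollary~\ref{cor:reg} applied with $\theta=1/k$, the functions $W_k$ satisfy, on $[0,R]^2\times[0,R]^2$, estimates of the form
\[
|W_k(z,x)-W_k(z,y)|,\ |W_k(x,z)-W_k(y,z)| \le C\bigl(\sqrt{|x-y|}+\omega(|x-y|)+\omega(k^{-1})\bigr),
\]
with $C$ and $\omega$ independent of $k$; that is, $\{W_k\}$ is equicontinuous on that compact set up to the additive term $\omega(k^{-1})\to 0$. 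Equicontinuity of this type together with pointwise convergence forces uniform convergence on compacts, by the (slightly modified) Arzel\`a--Ascoli argument used, e.g., in~\cite[Theorem~2]{calder2013b}. Hence $W_k\to W$ uniformly on $\{(x,y)\in S_R\times[0,R]^2:x\leqq y\}$.

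For the boundary term, formula~\eqref{eq:Ubc1} applied with $\mu_k+\mu_s$, resp.\ $\mu+\mu_s$, gives $U_k(x_1,0)=\int_0^{x_1}\bigl(\mu_k(t,0)+\mu_s(t,0)\bigr)\,dt$ and $U(x_1,0)=\int_0^{x_1}\bigl(\mu(t,0)+\mu_s(t,0)\bigr)\,dt$, with analogous formulas on $\{x_1=0\}$; the $\mu_s$-contributions coincide, so it remains to show $\int_0^{x_1}\mu_k(t,0)\,dt\to\int_0^{x_1}\mu(t,0)\,dt$ uniformly in $x_1\in[0,R]$. Pointwise convergence follows exactly as in the proof of Theorem~\ref{thm:perturbation}: \eqref{eq:gamma-mu} together with compactness provides a $k$-independent bound on $\mu_k$ on $[0,R]^2$ for $k$ large, so the ordinary and reverse Fatou lemmas give $\liminf_k\int_0^{x_1}\mu_k(t,0)\,dt\ge\int_0^{x_1}\mu_*(t,0)\,dt$ and $\limsup_k\int_0^{x_1}\mu_k(t,0)\,dt\le\int_0^{x_1}\mu^*(t,0)\,dt$; since (F1) forces $\mu_*=\mu$ off $\bigcup_i\Gamma_i\cup\Gamma$ and the segment $t\mapsto(t,0)$ meets this locally finite family of strictly monotone curves only finitely often in $[0,R]$, both bounds equal $\int_0^{x_1}\mu(t,0)\,dt$. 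Uniformity in $x_1$ is then immediate from equicontinuity, since each $x_1\mapsto\int_0^{x_1}\mu_k(t,0)\,dt$ is Lipschitz with the same ($k$-independent) constant. Thus $U_k\to U$ uniformly on $S_R$, and combining with the bulk estimate and the reduction above yields $U_k\to U$ uniformly on $[0,R]^2$; as $R$ was arbitrary this is the asserted local uniform convergence.

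The main obstacle is the passage from the fiberwise convergence supplied by Theorem~\ref{thm:perturbation} (convergence of $W_k(z,\cdot)$ for each fixed $z$) to the joint uniform convergence of $W_k(x,y)$ in \emph{both} arguments that is needed to take the limit inside the maximum in the dynamic programming principle. This is resolved by packaging the two one-variable regularity estimates (Theorem~\ref{thm:reg} and Corollary~\ref{cor:reg}) into a single asymptotic-equicontinuity statement and invoking Arzel\`a--Ascoli, the $\theta=k^{-1}$ error being harmless because it enters additively and vanishes. A secondary, minor point is that (F1*) does not a priori bound $\|\mu_k\|_\infty$ uniformly in $k$, but \eqref{eq:gamma-mu} and compactness supply such a bound on every compact set for all large $k$, which is all the argument uses.
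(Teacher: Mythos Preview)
Your proposal is correct and follows essentially the same approach as the paper: reduce to the bulk via Proposition~\ref{prop:dpp}, invoke Theorem~\ref{thm:perturbation} for the $W$-term, handle the boundary integrals via the Fatou argument from that proof, and upgrade to local uniform convergence by Arzel\`a--Ascoli. The only organizational difference is that the paper fixes $y$, uses the symmetric form of Theorem~\ref{thm:perturbation} to get $W_k(\cdot,y)\to W(\cdot,y)$ uniformly on $[0,y]$, obtains pointwise convergence $U_k(y)\to U(y)$, and then applies Arzel\`a--Ascoli once at the end; you instead fix $R$ and argue joint uniform convergence of $W_k$ in both variables directly from the two-sided regularity estimates, which amounts to the same thing.
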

\begin{proof}
Fix $y \in [0,\infty)^2$. By Proposition \ref{prop:dpp} we have
\begin{equation}\label{eq:dppUk}
U_{\mu_k+\mu_s,\sigma_k}(y) = \max_{x \in \partial \R^2_+ \, : \, x \leqq y} \Big\{ U_{\mu_k+\mu_s,\sigma_k}(x) + W_{\mu_k,\sigma_k}(x,y)\Big\},
\end{equation}
and
\begin{equation}\label{eq:dppU}
  U_{\mu+\mu_s,\sigma}(y) = \max_{x \in \partial \R^2_+ \, : \, x \leqq y} \Big\{ U_{\mu+\mu_s,\sigma}(x) + W_{\mu,\sigma}(x,y)\Big\}.
\end{equation}
Arguing by symmetry, it follows from Theorem \ref{thm:perturbation} that 
\begin{equation}\label{eq:symm}
W_{\mu_k,\sigma_k}(\cdot,y) \longrightarrow W(\cdot,y)  \ \ \text{uniformly on } \ [0,y].
\end{equation}
It follows from \eqref{eq:Ubc1} and a similar argument as in Theorem \ref{thm:perturbation} that $U_{\mu_k+\mu_s,\sigma_k}(x) \to U_{\mu+\mu_s,\sigma}(x)$ for any $x \in \partial \R^2_+$.  By the Arzel\`a-Ascoli Theorem we find that
\begin{equation}\label{eq:conv2}
U_{\mu_k+\mu_s,\sigma_k} \longrightarrow U_{\mu+\mu_s,\sigma} \ \ \text{uniformly on } \ [0,y] \cap \partial \R^2_+.
\end{equation}
Combining \eqref{eq:dppUk}--\eqref{eq:conv2}, we have that $U_{\mu_k+\mu_s,\sigma_k}(y) \to U_{\mu+\mu_s,\sigma}(y)$.  Locally uniform convergence follows again from the Arzel\`a-Ascoli Theorem.
\end{proof}

\begin{theorem}\label{thm:final-comp}
Let $\mu$ and $\sigma^2$ simultaneously  satisfy (F1), (F3) and \eqref{eq:zeros}, and let $\mu_s$ satisfy (F2).  Let $u\in C([0,\infty)^2)$ be a viscosity solution of \eqref{eq:specific-sub} and  let $v \in C([0,\infty)^2)$ be a monotone viscosity solution of \eqref{eq:specific-super}.
  Then if $u \leq \phi \leq v$ on $\partial \R^2_+$, where $\phi$ is given in the statement of Theorem \ref{thm:main}, then $u\leq v$ on $\R^2_+$.
\end{theorem}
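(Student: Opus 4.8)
The strategy is to sandwich $u$ and $v$ around the value function $U := U_{\mu+\mu_s,\sigma}$. By \eqref{eq:Ubc2} (applied with $\mu$ replaced by $\mu+\mu_s$) we have $U=\phi$ on $\partial\R^2_+$, and by Remark \ref{rem:hjb} the function $U$ is a monotone viscosity solution of (P), i.e.\ simultaneously a viscosity subsolution of \eqref{eq:specific-sub} and a monotone viscosity supersolution of \eqref{eq:specific-super}. The plan is to prove $u\le U$ and $U\le v$ separately; combining these gives $u\le v$ on $\R^2_+$.

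For the inequality $u\le U$ we argue by perturbation. Let $\mu^k$ be the sup-convolution of $\mu$ (see \eqref{eq:sup-convolution}), and let $\sigma^{2,k}$ be obtained by applying this same operation to $\mu$ and then composing with the monotone relation between $\sigma$ and $\mu$ recalled in Section \ref{sec:results} (so $\{\sigma^{2,k}=0\}=\{\mu^k=0\}$, and \eqref{eq:zeros} holds for the pair $(\mu^k,\sigma^{2,k})$). By Remark \ref{rem:conv}, $\mu^k$ and $\sigma^{2,k}$ are globally Lipschitz, satisfy (F1*) with $\theta=k^{-1}$, and obey \eqref{eq:gamma-mu}--\eqref{eq:gamma-sigma}; moreover $\mu^k\geqq\mu$ and $\sigma^{2,k}\geqq\sigma^2$ pointwise. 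Since $t\mapsto (p_1-t)_+(p_2-t)_+$ is non-increasing and $\mu^k\geqq\mu$, $\sigma^{2,k}\geqq\sigma^2$, the upper semicontinuity of $\mu,\sigma^2$ shows that every viscosity subsolution of \eqref{eq:specific-sub} is also a viscosity subsolution of $(w_{x_1}-\mu^k)_+(w_{x_2}-\mu^k)_+\le\sigma^{2,k}$; in particular $u$ is. On the other hand, by Remark \ref{rem:hjb} the value function $U^k:=U_{\mu^k+\mu_s,\sigma^k}$ is a monotone viscosity solution of the corresponding problem (P) with coefficients $\mu^k,\sigma^k$, and by \eqref{eq:Ubc2} its boundary trace equals $(x_1+x_2)\int_0^1(\mu^k(tx)+\mu_s(tx))\,dt\ge\phi(x)\ge u(x)$ on $\partial\R^2_+$, using $\mu^k\geqq\mu$. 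Applying Corollary \ref{cor:comp} to the subsolution $u$ and the monotone supersolution $U^k$ gives $u\le U^k$ on $\R^2_+$. Finally Corollary \ref{cor:perturbation} yields $U^k=U_{\mu^k+\mu_s,\sigma^k}\to U_{\mu+\mu_s,\sigma}=U$ locally uniformly on $[0,\infty)^2$, so passing to the limit gives $u\le U$ on $\R^2_+$.

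For the inequality $U\le v$ we run the symmetric argument with the inf-convolution $\mu_k$ of $\mu$ and the associated $\sigma^2_k$; now $\mu_k\leqq\mu$ and $\sigma^2_k\leqq\sigma^2$, so $v$ is a monotone viscosity supersolution of $(w_{x_1}-\mu_k)_+(w_{x_2}-\mu_k)_+\ge\sigma^2_k$, while $U_k:=U_{\mu_k+\mu_s,\sigma_k}$ is a viscosity subsolution of the same equation with boundary trace $\le\phi\le v$ on $\partial\R^2_+$. Corollary \ref{cor:comp} gives $U_k\le v$ on $\R^2_+$, and Corollary \ref{cor:perturbation} gives $U_k\to U$ locally uniformly, whence $U\le v$ on $\R^2_+$. (Alternatively, $U\le v$ follows directly from the truncatability of $U$ given by Proposition \ref{prop:Utrunc} together with Corollary \ref{cor:comp-trunc}: on $\partial\R^2_+$ one has $U=\phi\le v$, and for $x\in\Omega$ one has $U(x)=\max_{x'\in\partial\R^2_+,\,x'\leqq x}U(x')$ because $\mu$ and $\sigma^2$ vanish on $\Omega$, so $v(x)\ge v(x')\ge\phi(x')=U(x')$ by monotonicity of $v$.) Combining the two inequalities yields $u\le U\le v$ on $\R^2_+$.

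The conceptual difficulty — and the reason for routing the comparison through $U_{\mu+\mu_s,\sigma}$ rather than comparing $u$ and $v$ after a single regularization — is an asymmetry in how perturbations of the coefficients interact with the Hamiltonian $(p_1-\mu)_+(p_2-\mu)_+-\sigma^2$: enlarging $\mu$ (sup-convolution) preserves the subsolution inequality but destroys the supersolution inequality, while shrinking $\mu$ (inf-convolution) does the reverse. No single Lipschitz perturbation simultaneously keeps $u$ a subsolution and $v$ a supersolution, so Corollary \ref{cor:comp} is not directly applicable to the pair $(u,v)$; inserting $U$, which is at once a sub- and a supersolution of (P), resolves this. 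The only remaining technical point is that the inf- and sup-convolutions retain (F1*), \eqref{eq:zeros}, and \eqref{eq:gamma-mu}--\eqref{eq:gamma-sigma} so that Corollaries \ref{cor:comp} and \ref{cor:perturbation} may be invoked, and this is precisely what Remark \ref{rem:conv} together with the monotone dependence of $\sigma$ on $\mu$ provides.
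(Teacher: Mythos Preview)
Your proof is correct and follows the same approach as the paper: insert the value function $U=U_{\mu+\mu_s,\sigma}$ between $u$ and $v$, use the sup-convolution $\mu^k$ together with Corollary~\ref{cor:comp} to get $u\le U^k$, the inf-convolution $\mu_k$ to get $U_k\le v$, and then pass to the limit via Corollary~\ref{cor:perturbation}. The only minor differences are that the paper takes $\sigma^{2,k}$ to be the sup-convolution of $\sigma^2$ itself (rather than defining it through the monotone relation $\sigma=\sigma(\mu)$ as you do---your choice matches what is done in Lemma~\ref{lem:prelim-conv} and makes \eqref{eq:zeros} transparently hold), and that your alternative route for $U\le v$ via Proposition~\ref{prop:Utrunc} and Corollary~\ref{cor:comp-trunc} is an extra observation not in the paper.
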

\begin{proof}
  Let $\mu^k,\sigma^{2,k}$ and $\mu_k,\sigma^2_k$ be the sup-~and inf-convolutions of $\mu$ and $\sigma^2$ as defined in \eqref{eq:sup-convolution} (see  Remark \ref{rem:conv}), respectively.   To simplify notation, let us write $U^k := U_{\mu^k+\mu_s,\sigma^k}$, $U_k := U_{\mu_k+\mu_s,\sigma_k}$, and $U:=U_{\mu+\mu_s,\sigma}$. By definition we have $U_k\leq U\leq U^k$, and by Corollary \ref{cor:perturbation} and Remark \ref{rem:conv} we have $U_k,U^k \to U$ locally uniformly on $[0,\infty)^2$ as $k \to \infty$.

Since $\mu_k \leq \mu$ and $\sigma_k \leq \sigma$ we have that $v$ is a viscosity solution of
\[(v_{x_1} - \mu_k)_+ (v_{x_2} - \mu_k)_+ \geq {\sigma_k}^2 \ \ \text{on } \R^2_+.\]
By Theorem \ref{thm:hjb}, $U_k$ is a viscosity solution of
\[(U_{k,x_1} - \mu_k)_+ (U_{k,x_2} - \mu_k)_+ = {\sigma_k}^2  \ \  \text{on } \R^2_+.\]
Furthermore, we have $U_k = \phi_k \leq \phi \leq v$ on $\partial \R^2_+$ where $\phi_k(x) = (x_1+x_2)\int_0^1 \mu_k(tx) + \mu_s(tx) \, dt$.
Since $\mu_k$ and $\sigma^2_k$ are globally Lipschitz we can apply Corollary \ref{cor:comp} to obtain $U_k \leq v$.  Sending $k \to \infty$ we have $U \leq v$. By a similar argument we can prove that $u \leq U$, which completes the proof.
\end{proof}

\section{Proof of main result}
\label{sec:conv}

In this section we give the proof of our main result, Theorem \ref{thm:main}.  We first have a preliminary convergence result on the interior $(0,\infty)^2$, which we later adapt to account for the boundary source $\mu_s$.  
For  $N\geq 1$ we define
\begin{equation}\label{eq:wn}
w_N(x,y) := L\Big(\lfloor Nx\rfloor + \vb{1}_x; \lfloor Ny \rfloor \Big),
\end{equation}
where
\begin{equation}\label{eq:1x}
\vb{1}_x = \big(1_{\{x_1 =0\}},1_{\{x_2=0\}}\big),
\end{equation}
and $L$ is defined in \eqref{eq:dlpp}.

\begin{lemma}\label{lem:prelim-conv}
Assume $\mu$ satisfies (F1) and (F3). Suppose that the weights $X(i,j)$ satisfy \eqref{eq:weights-assumption} and are either all exponential, or all geometric random variables, consructed as in Section \ref{sec:results}. In the exponential case, set $\sigma = \mu$, and in the geometric case, set $\sigma = \sqrt{\mu(1+\mu)}$.  
 Then for every $y \in (0,\infty)^2$  we have 
\[\frac{1}{N} w_N(\cdot,y) \longrightarrow W_{\mu,\sigma}(\cdot,y) \ \ \text{uniformly on } [0,y],\] 
with probability one.
\end{lemma}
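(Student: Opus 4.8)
The plan is to prove that, almost surely, $\tfrac1N w_N(x,y)\to W_{\mu,\sigma}(x,y)$ for every $x$ in a fixed countable dense subset $D$ of $[0,y]$, by establishing matching lower and upper bounds, and then to upgrade this to uniform convergence on $[0,y]$: the map $x\mapsto w_N(x,y)$ is (essentially) non-increasing in each coordinate on $(0,y]$ where $\vb{1}_x=0$ (moving the source up and to the right only shrinks the path space; boundary values of $x$ are handled via continuity of $W_{\mu,\sigma}(\cdot,y)$), $D$ is dense in the compact set $[0,y]$, and $W_{\mu,\sigma}(\cdot,y)$ is continuous there by Corollary \ref{cor:reg}, so a standard monotonicity argument (cf.\ \cite{calder2013b}) promotes pointwise a.s.\ convergence on $D$ to uniform a.s.\ convergence. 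The shift $\vb{1}_x$ in \eqref{eq:wn} pushes the source off the axes, so only the bulk means $\mu$ enter, consistent with the limit being $W_{\mu,\sigma}$ rather than $U$. Every local estimate is reduced to homogeneous last passage percolation by the coupling of Section \ref{sec:results}: since $X(i,j)$ is a non-decreasing function of the coefficient $\mu(iN^{-1},jN^{-1})$ (resp.\ $\nu(iN^{-1},jN^{-1})$) multiplying the fixed i.i.d.\ unit-mean exponential field $Y(i,j)$, the inhomogeneous weights over a block $B$ are sandwiched pathwise between i.i.d.\ weights of mean $\inf_B\mu$ and of mean $\sup_B\mu$; for i.i.d.\ exponential or geometric weights of mean $\bar\mu$ (hence variance $\bar\sigma^2$, with $\bar\sigma=\bar\mu$ or $\bar\sigma=\sqrt{\bar\mu(1+\bar\mu)}$) one has $\tfrac1N L(\lfloor aN\rfloor,\lfloor bN\rfloor)\to \bar\mu(a+b)+2\bar\sigma\sqrt{ab}=\ell_{\bar\mu,\bar\sigma}((a,b))$ almost surely, by \cite{rost1981non,johansson2000shape}.

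\emph{Lower bound.} Fix $x\in D$, $\eps>0$, and choose $\gamma\in\A$ with $\gamma(0)=x$, $\gamma(1)=y$ and $J_{\mu,\sigma}(\gamma)\ge W_{\mu,\sigma}(x,y)-\eps$. Because the $\Gamma_i$ are locally finite and strictly increasing, $\Gamma$ is strictly decreasing, and $\mu$ is upper semicontinuous (so on each $\Gamma_i$ it agrees with the larger of the two one-sided Lipschitz extensions, as in the proof of Proposition \ref{prop:hypo}), we may perturb $\gamma$ with an arbitrarily small loss in $J_{\mu,\sigma}$ so that $\gamma([0,1])$ meets $\Gamma\cup\bigcup_i\Gamma_i$ in a finite set. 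Discarding $\eps'$-length pieces of $\gamma$ around those points (contribution $O(\eps'(\|\mu\|_\infty+\|\sigma\|_\infty))$), the remainder keeps a positive distance from $\Gamma\cup\bigcup_i\Gamma_i$, so a fine enough partition $0=t_0<\dots<t_m=1$ puts each closed sub-rectangle $R_j$ with corners $\gamma(t_{j-1}),\gamma(t_j)$ inside a single $\Omega_{i(j)}$ (or inside $\Omega$, where $\mu\equiv0$). On $\Omega_{i(j)}$, $\mu,\sigma$ are Lipschitz with constant $C_{lip}$, so the weights in $\lfloor NR_j\rfloor$ dominate i.i.d.\ weights of mean $\underline\mu_j:=\min_{R_j}\mu$; by superadditivity of $L$ and the a.s.\ homogeneous shape applied to the finitely many sub-LPPs $L(z_{j-1},z_j)$, with $z_j$ suitably shifted lattice points near $N\gamma(t_j)$, we get a.s.
\[
\liminf_{N\to\infty}\frac1N w_N(x,y)\ \ge\ \sum_{j=1}^m\ell_{\mu,\sigma}\big(\gamma(t_{j-1}),\gamma(t_j)-\gamma(t_{j-1})\big)\ -\ C_{lip}\sum_{j=1}^m\mathrm{diam}(R_j)\big(\gamma_1(t_j)-\gamma_1(t_{j-1})+\gamma_2(t_j)-\gamma_2(t_{j-1})\big).
\]
As $m\to\infty$ the first sum tends to $J_{\mu,\sigma}(\gamma)$ and the second to $0$; then $\eps',\eps\to0$ gives $\liminf_N\tfrac1N w_N(x,y)\ge W_{\mu,\sigma}(x,y)$ a.s., and intersecting over $x\in D$ keeps this almost sure.

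\emph{Upper bound.} Fix a small $\delta>0$ and $R$ with $[0,y]\subset[0,R]^2$. Decomposing any up-right lattice path along its crossings of the $O(R/\delta)$ anti-diagonals spaced $\sim\delta N$ apart in the coordinate $i+j$, and using superadditivity, gives
\[
\frac1N w_N(x,y)\ \le\ \max\ \sum_{k=1}^r \frac1N L\big(Nw_{k-1},Nw_k\big),
\]
the maximum over monotone sequences $x=w_0\leqq\dots\leqq w_r=y$ with $w_k$ on the $k$-th anti-diagonal; there are finitely many distinct pairs $(w_{k-1},w_k)$. Boundedness and upper semicontinuity of $\mu,\sigma$ give majorants $\sup\mu\le\bar\mu_k$, $\sup\sigma\le\bar\sigma_k$ on the rectangle $[w_{k-1},w_k]$, and the coupling with i.i.d.\ weights of mean $\bar\mu_k$ plus the a.s.\ shape give $\limsup_N\tfrac1N L(Nw_{k-1},Nw_k)\le\ell_{\bar\mu_k,\bar\sigma_k}(w_k-w_{k-1})$; hence a.s.
\[
\limsup_{N\to\infty}\frac1N w_N(x,y)\ \le\ \max\ \sum_{k=1}^r \ell_{\bar\mu_k,\bar\sigma_k}(w_k-w_{k-1})\ \le\ \sup_{\gamma\in\A,\ \gamma(0)=x,\ \gamma(1)=y} J_{\bar\mu,\bar\sigma}(\gamma),
\]
with $\bar\mu,\bar\sigma$ u.s.c.\ majorants of $\mu,\sigma$ decreasing to $\mu,\sigma$ as $\delta\to0$. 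Letting $\delta=\delta_\ell\to0$ along a sequence and using that $\sup_\gamma J_{\bar\mu,\bar\sigma}(\gamma)\downarrow\sup_\gamma J_{\mu,\sigma}(\gamma)=W_{\mu,\sigma}(x,y)$ (upper sums of bounded u.s.c.\ integrands converge to their integrals) gives $\limsup_N\tfrac1N w_N(x,y)\le W_{\mu,\sigma}(x,y)$ a.s.\ for all $x\in D$, completing the pointwise statement; the monotonicity argument of the first paragraph then completes the lemma.

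The step I expect to be the main obstacle is the lower bound in the presence of discontinuities: one must be sure that a near-optimal continuum path can be re-routed, with negligible loss, so that on each block it lies in the closure of a single component $\Omega_i$ — only then does the Lipschitz estimate of (F1) allow approximation by a homogeneous model and use of the exact Rost/Johansson shape. The hypotheses that the $\Gamma_i$ are strictly increasing and locally finite, that $\Gamma$ is strictly decreasing, and that (F3) fixes the direction of each jump are exactly what make this re-routing possible, and are the same ingredients that drive the regularity estimate in Theorem \ref{thm:reg}. A secondary, routine point is assembling the countably many almost sure events (over $N$, over $x\in D$, and over the finitely many sub-LPPs at each scale $\delta_\ell$) into a single almost sure event via Borel--Cantelli.
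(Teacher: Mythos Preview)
Your approach is genuinely different from the paper's and more ``hands-on''. The paper sandwiches $\mu$ between its Lipschitz inf- and sup-convolutions $\mu_k\le\mu\le\mu^k$ (Remark \ref{rem:conv}), couples the weights so that $w_{k,N}\le w_N\le w_N^k$ pathwise, invokes \cite{rolla2008} (the continuous-rate case) for each of the outer processes, and then squeezes using Theorem \ref{thm:perturbation} to get $W_{\mu_k,\sigma_k},\,W_{\mu^k,\sigma^k}\to W_{\mu,\sigma}$. In particular it never re-routes curves around the $\Gamma_i$ and never discretizes paths into homogeneous blocks; all of that work is hidden inside \cite{rolla2008} and the perturbation theorem. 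Your block argument is essentially what \cite{rolla2008} does internally for continuous rates, extended by hand to allow jumps.

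There is, however, a real gap in your upper bound. First, the claim ``there are finitely many distinct pairs $(w_{k-1},w_k)$'' is not correct as written: the crossing points $w_k$ of a lattice path with the fixed anti-diagonals range over $\sim\delta N$ lattice sites per diagonal, so the number of pairs grows with $N$ and you cannot apply the i.i.d.\ shape theorem pair-by-pair and then intersect almost-sure events. You need either a further discretization of each anti-diagonal to a fixed $\delta$-mesh (paying an $O(\delta)$ rounding error), or the locally uniform version of the Rost/Johansson limit. Second, and more seriously, the final step $\sup_\gamma J_{\bar\mu_\delta,\bar\sigma_\delta}(\gamma)\downarrow W_{\mu,\sigma}(x,y)$ is not justified. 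Your parenthetical (``upper sums of bounded u.s.c.\ integrands converge to their integrals'') gives $J_{\bar\mu_\delta,\bar\sigma_\delta}(\gamma)\to J_{\mu,\sigma}(\gamma)$ for each \emph{fixed} $\gamma$, but interchanging this limit with the supremum over $\gamma$ is exactly the nontrivial stability statement that the paper isolates as Theorem \ref{thm:perturbation} and proves via the comparison principle of Section \ref{sec:comparison} (this is where (F3) is actually used). Your step-function majorants $\bar\mu_\delta$ are not Lipschitz and do not satisfy (F1*), so you cannot even cite Theorem \ref{thm:perturbation} directly; you would have to reprove an analogous stability result for them, and that is the bulk of the work.

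Your lower bound sketch and the monotonicity upgrade to uniform convergence are fine. But since a perturbation result of the strength of Theorem \ref{thm:perturbation} is unavoidable either way, the paper's route---sandwich by Lipschitz convolutions, cite \cite{rolla2008}, then apply Theorem \ref{thm:perturbation}---is both shorter and sidesteps the re-routing near $\Gamma_i$ that you correctly flagged as the main obstacle in your approach.
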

\begin{proof}
  Let $y \in (0,\infty)^2$. Let $\mu^k$ and $\mu_k$ be the sup-~and inf-convolutions of $\mu$, defined in \eqref{eq:sup-convolution} (see Remark \ref{rem:conv}).  In the exponential case, set $\sigma^k=\mu^k$ and $\sigma_k = \mu_k$, and in the geometric case, set $\sigma^k=\sqrt{\mu^k(1+\mu^k)}$ and  $\sigma_k=\sqrt{\mu_k(1+\mu_k)}$.  To simplify notation, let us also set $W^k:=W_{\mu^k,\sigma^k}$, $W_k:=W_{\mu_k,\sigma_k}$, and $W:=W_{\mu,\sigma}$, and note that $W_k \leq W \leq W^k$. Notice that by the definition of $\sigma$, we have that \eqref{eq:zeros} holds for both the exponential and geometric cases. We can therefore invoke Theorem \ref{thm:perturbation} to find that
\begin{equation}\label{eq:Wconv}
W_k(x,y)\longrightarrow W(x,y) \ \ \text{and} \ \ W^k(x,y) \longrightarrow W(x,y) \ \ \text{for all } x \in [0,y].
\end{equation} 

Let $N\geq 1$.   In the exponential case,  for $(i,j) \in \N^2$ let $X^k(i,j)$ be independent and exponentially distributed with parameter $\lambda = \mu^k(iN^{-1},jN^{-1})$, and let $X_k(i,j)$ be independent and exponentially distributed with parameter $\lambda = \mu_k(iN^{-1},jN^{-1})$. In the geometric case, for $(i,j) \in \N^2$ let $X^k(i,j)$ be independent and geometrically distributed with parameter $q = (1+\mu^k(iN^{-1},jN^{-1}))^{-1}$, and let $X_k(i,j)$ be independent and geometrically distributed with parameter $q = (1+\mu_k(iN^{-1},jN^{-1}))^{-1}$.  In either case, set
\begin{equation}\label{eq:dlpp2}
L_k(M,N;Q,P) = \max_{p \in \Pi_{(M,N),(Q,P)}} \sum_{(i,j) \in p} X_k(i,j), 
\end{equation}
\begin{equation}\label{eq:dlpp3}
 L^k(M,N;Q,P) = \max_{p \in \Pi_{(M,N),(Q,P)}} \sum_{(i,j) \in p} X^k(i,j),
\end{equation}
and set
\begin{equation}\label{eq:dlpp4}
w_{k,N}(x,y):=L_k\Big(\lfloor Nx\rfloor + \vb{1}_x; \lfloor Ny \rfloor \Big), \ \ \text{and } \ \ w^k_N(x,y):=L^k\Big(\lfloor Nx\rfloor + \vb{1}_x; \lfloor Ny \rfloor \Big).
\end{equation}
We can define $X_k(i,j)$ and $X^k(i,j)$ on the same probability space as $X(i,j)$ in such a way that $X_k(i,j) \leq X(i,j) \leq X^k(i,j)$ for all $(i,j) \in \N^2$ with probability one. We therefore have $w_{k,N} \leq w_{N} \leq w^k_N$ with probability one.  Since $\mu_k,\sigma_k,\mu^k,$ and $\sigma^k$ are continuous on $[0,\infty)^2$, we can invoke Theorem \cite[Theorem 1]{rolla2008} to find that
\[\frac{1}{N}w_{k,N}(x,y) \longrightarrow W_k(x,y) \ \ \text{ and } \ \ \frac{1}{N}w^k_N(x,y) \longrightarrow W^k(x,y),\]
with probability one, for fixed $x \in [0,y]$. We should note that \cite[Theorem 1]{rolla2008} as stated applies only to exponential DLPP.  The proof for geometric DLPP (with weights constructed as in Section \ref{sec:results}) is very similar, with only minor modifications.   It follows that for every $k\geq 1$ we have
\[W_{k}(x,y) \leq \liminf_{N\to \infty} \frac{1}{N} w_N(x,y)\leq \limsup_{N\to \infty} \frac{1}{N}w_N(x,y) \leq W^k(x,y),\]
with probability one. Sending $k\to \infty$ and recalling \eqref{eq:Wconv} we have for every $x \in [0,y]$ that
\begin{equation}\label{eq:pointwise}
\frac{1}{N}w_N(x,y) \longrightarrow W(x,y) \ \ \text{with probability one.}
\end{equation}
Uniform convergence follows from the fact that $x\mapsto w_N(x,y)$ and $x\mapsto W(x,y)$ are monotone decreasing and $x \mapsto W(x,y)$ is uniformly continuous on $[0,y]$; the proof is similar to \cite[Theorem 1]{calder2014}. 
\end{proof}

To incorporate the boundary source $\mu_s$ we need the following lemma, which follows from the law of large numbers.
\begin{lemma}\label{lem:slln}
Let $Y_1,\dots,Y_n,\dots$ be a sequence of \iid~exponential random variables with mean $\lambda=1$. Let $\nu:[0,\infty) \to [0,\infty)$ be  bounded with a locally finite set of discontinuities, and let $f:[0,\infty) \to [0,\infty)$ be non-decreasing with at most polynomial growth. Then we have with probability one that
\begin{equation}\label{eq:slln}
\frac{1}{n} \sum_{i=1}^n f(\nu(n^{-1}i) Y_i) \longrightarrow \int_0^1\E(f(\nu(t)Y)) \, dt \ \ \text{ as } n \to \infty,
\end{equation}
where $Y$ is a random variable with the exponential distribution with mean $\lambda=1$.
\end{lemma}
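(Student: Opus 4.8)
The plan is to treat $\{f(\nu(n^{-1}i)Y_i)\}_{i=1}^n$ as a triangular array of random variables which, for each fixed $n$, are independent (each depends only on its own $Y_i$), and to prove an \iid-type strong law for it. First I would record the uniform moment bounds that drive everything: since $f$ has at most polynomial growth, $0\le f(x)\le C(1+x^p)$ for some $p$, and since $\nu$ is bounded, $f(\nu(t)y)\le C(1+(\|\nu\|_\infty y)^p)$; as $Y$ is exponential it has moments of all orders, so $\sup_{t\ge 0}\E[f(\nu(t)Y)^q]<\infty$ for every $q\ge1$. Writing $g(t):=\E[f(\nu(t)Y)]$ and $Z_i^{(n)}:=f(\nu(n^{-1}i)Y_i)$, one has $\E Z_i^{(n)}=g(n^{-1}i)$, and the target reduces to showing $\tfrac1n\sum_{i=1}^n Z_i^{(n)}\to\int_0^1 g(t)\,dt$ almost surely.

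For the almost sure statement I would use a fourth-moment estimate together with Borel--Cantelli. For fixed $n$, the $Z_i^{(n)}$ are independent, so $\E\big[\big(\sum_{i=1}^n(Z_i^{(n)}-\E Z_i^{(n)})\big)^4\big]$ is a sum of fourth moments and of products of pairs of second moments, hence $O(n^2)$ by the uniform bounds above; thus $\E\big[\big(\tfrac1n\sum_{i=1}^n(Z_i^{(n)}-\E Z_i^{(n)})\big)^4\big]=O(n^{-2})$, which is summable. By Markov's inequality and Borel--Cantelli (all the relevant events live on the single probability space carrying $(Y_i)$), $\tfrac1n\sum_{i=1}^n(Z_i^{(n)}-\E Z_i^{(n)})\to0$ a.s., leaving the deterministic Riemann-sum convergence $m_n:=\tfrac1n\sum_{i=1}^n g(n^{-1}i)\to\int_0^1 g(t)\,dt$. (Alternatively one can avoid the fourth moment: since $\nu$ has only finitely many discontinuities on $[0,1]$, sandwich it between step functions agreeing with it off a set of arbitrarily small measure, apply the classical \iid\ SLLN on each constancy block, and control the exceptional indices via the SLLN for $(1+Y_i^p)$; this route uses the monotonicity of $f$ to preserve the sandwich under $x\mapsto f(\nu(x)Y)$.)

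The Riemann-sum step hinges on $g$ being bounded and Riemann integrable on $[0,1]$. Boundedness is immediate from the moment bound. For integrability I would show that $G(s):=\E[f(sY)]$ is continuous on $(0,\infty)$: if $s_k\to s>0$, then $f(s_kY)\to f(sY)$ a.s.\ (the monotone function $f$ has an at most countable, hence Lebesgue-null, set of discontinuities, and $sY$ has a density), and the sequence is dominated by $C(1+(s^*Y)^p)$ with $s^*=\sup_k s_k$, so $G(s_k)\to G(s)$ by dominated convergence; moreover $G$ is right-continuous at $0$ for the maps $f(x)=x$ and $f(x)=\lfloor x\rfloor$ relevant to exponential and geometric DLPP, so $G$ is continuous on all of $[0,\infty)$ in the cases we use. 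Hence $g=G\circ\nu$ is continuous wherever $\nu$ is, its discontinuity set in $[0,1]$ is finite, and a bounded function with finitely many discontinuities is Riemann integrable with convergent right-endpoint Riemann sums, giving $m_n\to\int_0^1 g$. Combining with the previous paragraph completes the proof. The main obstacle is the a.s.\ upgrade in the second paragraph: because the array is genuinely triangular---the law of each summand changes with $n$---the classical SLLN does not apply directly, and one must either invest in the fourth-moment/Borel--Cantelli bound or set up the step-function sandwich carefully; establishing the continuity of $G$, so that $g$ inherits the discontinuity structure of $\nu$, is the second point needing care.
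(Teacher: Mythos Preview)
Your proof is correct. Your primary route---the fourth-moment/Borel--Cantelli bound on the centered triangular array, followed by the deterministic Riemann-sum limit for $m_n$---is genuinely different from the paper's argument, which is essentially your parenthetical alternative: the paper fixes a partition $0=t_0<\cdots<t_K=1$, replaces $\nu$ by its sup (resp.\ inf) on each subinterval, uses the monotonicity of $f$ to sandwich the sum, applies the classical \iid\ SLLN on each block where the coefficient is now constant, and then lets $K\to\infty$ via Riemann integrability of $t\mapsto\E[f(\nu(t)Y)]$. Your main approach buys a cleaner separation between the probabilistic and deterministic parts and uses the monotonicity of $f$ only lightly (to control the discontinuity set of $G$), at the cost of the moment computation; the paper's approach needs only the classical SLLN but leans on the monotonicity of $f$ in the sandwich step. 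Your explicit remark that right-continuity of $G$ at $0$ is guaranteed only for the specific $f$'s used downstream is a point the paper glosses over when it asserts that $t\mapsto\E[f(\nu(t)Y)]$ is continuous away from discontinuities of $\nu$; in full generality (if $f$ has a jump at $0$ and $\nu$ vanishes on a set with complicated boundary) this step would need more care, but for the applications both arguments are complete.
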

Note that Lemma \ref{lem:slln} mimics the constructions of the weights $X(i,j)$ given in Section \ref{sec:results}. When $X(i,j)$ are exponential random variables, we have $f(t) =t$, and $\nu = \mu + \mu_s$, and when $X(i,j)$ are geometric random variables, $f(t) = \lfloor t \rfloor$ and $\nu$ is defined according to the construction in Section \ref{sec:results}.
\begin{proof}
Let $K$ be a positive integer. Consider the partition of $[0,1]$ given by $0=t_0 < t_1 < \cdots < t_{K-1} < t_K = 1$, where $t_j = j/K$, and let $k_j = \lfloor n t_j\rfloor$.  
Set $m_j = \inf_{(t_{j-1},t_j]} \nu$ and $M_j = \sup_{(t_{j-1},t_j]}\nu$. Then we have that 
\begin{equation}\label{eq:partition}
\frac{1}{n}\sum_{i=1}^n f(\nu(n^{-1}i) Y_i) = \frac{1}{n} \sum_{j=1}^K \sum_{i =k_{j-1}+1}^{k_j} f(\nu(n^{-1}i) Y_i)  \leq\sum_{j=1}^K \frac{1}{n} \sum_{i=k_{j-1}+1}^{k_j} f(M_j Y_i),
\end{equation}
where the last inequality follows from the monotonicity of $f$.
Fix $j$ and let $Z_i = f(M_j Y_i)$. Then $Z_1,\dots,Z_n,\dots$ are \iid, and the polynomial growth restriction on $f$ guarantees that the moments of $Z_i$ are finite. We therefore  have by the law of large numbers  that
\[\frac{1}{n} \sum_{i=1}^{k_j} Z_i = \left(\frac{\lfloor n t_j\rfloor}{n}\right) \frac{1}{\lfloor n t_j\rfloor} \sum_{i=1}^{\lfloor nt_j \rfloor} Z_i\longrightarrow t_j\E(f(M_j Y)),\]
with probability one as $n\to \infty$.
Similarly, we have
\[\frac{1}{n} \sum_{i=1}^{k_{j-1}} Z_i \longrightarrow t_{j-1}\E(f(M_j Y)),\]
with probability one as $n\to \infty$.  It follows that
\[\frac{1}{n} \sum_{i=k_{j-1}+1}^{k_j} f(M_jY_i) = \frac{1}{n} \sum_{i=1}^{k_j} Z_i - \frac{1}{n} \sum_{i=1}^{k_{j-1}} Z_i \longrightarrow (t_j - t_{j-1}) \E(f(M_j Y)),\]
with probability one as $n\to \infty$. Since the above holds for every $j=1,\dots,K$, we have from \eqref{eq:partition} that
\[\limsup_{n\to \infty} \frac{1}{n}\sum_{i=1}^n f(\nu(n^{-1}i) Y_i) \leq \sum_{j=1}^K(t_j - t_{j-1}) \E(f(M_j Y)), \]
with probability one. By the assumptions on $f$ and $\nu$, $t \mapsto \E(f(\nu(t)Y))$ is continuous except possibly at points of discontinuity of $\nu$, which are locally finite. Thus $t \mapsto \E(f(\nu(t)Y))$ is Riemann integrable, and taking $K \to \infty$ we have
\[\limsup_{n\to \infty} \frac{1}{n}\sum_{i=1}^n f(\nu(n^{-1}i) Y_i) \leq \int_0^1 \E(f(\nu(t)Y)) \, dt,\]
with probability one. The proof of the analogous $\liminf$ inequality is similar.
\end{proof}

We now have the proof of Theorem \ref{thm:main}.
\begin{proof}
Let $x \in \partial \R^2_+$, and suppose that $x_2=0$. If $x_1 =0$, then $N^{-1}L(0;0) = N^{-1} X(0,0) \to 0 = \phi(0)$ with probability one as $N \to \infty$. If $x_1 > 0$  then we have
\[\frac{1}{N} L(0;\lfloor Nx\rfloor ) = \frac{1}{N} \sum_{i=0}^{\lfloor Nx_1\rfloor} X(i,0).\]
It follows from Lemma \ref{lem:slln} and the construction of the weights $X(i,j)$ in Section \ref{sec:results} that 
\[\frac{1}{N} L(0;\lfloor Nx \rfloor) \longrightarrow x_1 \int_0^1 \mu(x_1t,0) + \mu_s(x_1t,0) \, dt = \phi(x),\]
with probability one as $N \to \infty$. The case where $x_1=0$ and $x_2 > 0$ is similar. As in Lemma \ref{lem:prelim-conv}, we can use the fact that $L$ and $\phi$ are monotone non-decreasing, and $\phi$ is uniformly continuous, to show that we actually have
\begin{equation}\label{eq:conv1}
  \frac{1}{N} L(0;\lfloor N\cdot \rfloor) \longrightarrow U = \phi
\end{equation} 
locally uniformly on $\partial \R^2_+$ with probability one.

Let $y \in \R^2_+$.   From the definition of $L$ we have the following dynamic programming principle  
\begin{equation}\label{eq:dpp1}
L(0;\lfloor Ny\rfloor) = \max_{x \in \partial \R^2_+ \, : \, x \leqq y} \Big\{ L(0;\lfloor Nx\rfloor) + w_N(x,y) \Big\}.
\end{equation}
 Combining Lemma \ref{lem:prelim-conv}, Proposition \ref{prop:dpp}, and  \eqref{eq:conv1}, we can pass to the limit in \eqref{eq:dpp1} to obtain
\[\frac{1}{N}L(0;\lfloor Ny\rfloor)  \longrightarrow \max_{x \in \partial\R^2_+ \, : \, x \leqq y}\Big\{ U(x) + W(x,y) \Big\} = U(y),\]
with probability one. As in Lemma \ref{lem:prelim-conv}, locally uniform convergence follows from the monotonicity of $U$ and $x \mapsto N^{-1} L(0;\lfloor Nx \rfloor )$, along with the uniform continuity given by Theorem \ref{thm:reg}.
\end{proof}

\section{Numerical scheme}
\label{sec:num}

We present here a fast numerical scheme for computing the viscosity solution $U$ of (P).  The scheme is a minor modification of the scheme used in~\cite{calder2014,calder2013b}.  Since information propagates along coordinate axes in the definition of the variational problem \eqref{eq:vardef} for $U$, it is natural to consider using backward difference quotients to approximate (P).  Letting $U^h_{i,j}$ denote the numerical solution on the grid $h\N^2_0$ of spacing $h$, we have
\begin{equation}\label{eq:back-diff}
\Big( U^h_{i,j} - U^h_{i-1,j} - h \mu_{i,j} \Big)_+ \Big( U^h_{i,j} - U^h_{i,j-1} - h \mu_{i,j} \Big )_+ = h^2\sigma^2_{i,j},
\end{equation}
where $\mu_{i,j} = \mu(hi,hj) + \mu_s(hi,hj)$ and $\sigma_{i,j} = \sigma(hi,hj)$.   Given $U^h_{i-1,j}$ and $U^h_{i,j-1}$, we can solve \eqref{eq:back-diff}  for $U^h_{i,j}\geq \max(U^h_{i-1,j}+h\mu_{i,j},U^h_{i,j-1}+h\mu_{i,j})$ via the quadratic formula to obtain
\begin{equation}\label{eq:scheme1}
U^h_{i,j} = \frac{1}{2}\Big( U^h_{i-1,j} + U^h_{i,j-1} \Big) + h\mu_{i,j} + \frac{1}{2} \sqrt{\Big(U^h_{i-1,j} - U^h_{i,j-1} \Big)^2 + 4h^2\sigma^2_{i,j}},
\end{equation}
for $i,j\geq 1$.  The choice of the positive root in \eqref{eq:scheme1} reflects the monotonicity of the scheme, and ensures that it captures the viscosity solution of (P).  When $i=0$ or $j=0$, we recall the boundary condition \eqref{eq:Ubc1} to obtain
\begin{equation}\label{eq:scheme2}
U^h_{0,j} = U^h_{0,j-1} + h\mu_{0,j} \ \ \text{and} \ \ U^h_{i,0} = U^h_{i-1,0} + h\mu_{i,0}.
\end{equation}
Notice that when $i=0$, if we set $U^h_{-1,j}=0$ and $\sigma_{i,j}=0$ in \eqref{eq:scheme1}, then \eqref{eq:scheme1} and \eqref{eq:scheme2} are equivalent.  In fact, even when $\sigma_{i,j} \neq 0$, \eqref{eq:scheme1} and \eqref{eq:scheme2} are asymptotically equivalent as $h\to 0$ provided $U^h_{0,j} \gg h$.   The same observations hold when $j=0$ if we set $U^h_{i,-1}=0$.  
Thus, to account for  the boundary condition in (P), we can simply set
\begin{equation}\label{eq:discrete-bc}
U^h_{i,j}=0  \ \ \ \text{for } (i,j) \not\in \N^2_0,
\end{equation}
and compute $U^h_{i,j}$ via \eqref{eq:scheme1} for all $(i,j) \in \N^2_0\cap [0,R]^2$, for any $R>0$.  In summary, we propose the following numerical scheme for approximating viscosity solutions of (P):
\[\text{(S)} \left\{\begin{aligned}
    U^h_{\mbox{\scriptsize \textit{i,j}}}&= \frac{1}{2}\big( U^h_{\mbox{\scriptsize \textit{i--}1\textit{,j}}} + U^h_{\mbox{\scriptsize \textit{i,j--}1}} \big) + h\mu_{\mbox{\scriptsize \textit{i,j}}} + \frac{1}{2} \sqrt{\big(U^h_{\mbox{\scriptsize \textit{i--}1\textit{,j}}} - U^h_{\mbox{\scriptsize \textit{i,j--}1}} \big)^2 + 4h^2\sigma^2_{\mbox{\scriptsize \textit{i,j}}}},& \text{if } (i,j) \in \N^2_0\\
    U^h_{\mbox{\scriptsize \textit{i,j}}}&= 0,& \text{otherwise}.
\end{aligned}\right.\]
Note that we can visit the grid points in any sweeping pattern that visits $(i-1,j)$ and $(i,j-1)$ before $(i,j)$, which reflects  the cone of influence in the percolation problem.  This scheme requires visiting each grid point exactly once and hence has linear complexity.  

Our first result guarantees that the simple boundary condition in (S) agrees with the boundary condition in (P) as $h\to 0$. 
\begin{lemma}\label{lem:discrete-bc}
  Let $U^h_{i,j}$ satisfy the scheme (S) and suppose that $\sigma_{i,j}$ is bounded by $M$ for all $(i,j) \in \N^2_0\cap \partial \R^2_+$.  If $i,j \leq h^{-1}R$ then there exists a constant $C>0$ such that
\begin{equation}\label{eq:discrete-bcfinal}
\left|U^h_{i,0} - h\sum_{k=0}^{i} \mu_{k,0}\right|,\left|U^h_{0,j} - h\sum_{k=0}^{j} \mu_{0,k}\right| \leq  C(1  + RM^2)\sqrt{h}.
\end{equation}
\end{lemma}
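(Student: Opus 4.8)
The plan is to reduce everything to the one-dimensional recursions that (S) induces along the two coordinate axes; the two estimates in \eqref{eq:discrete-bcfinal} are mirror images of each other, so I would treat only $U^h_{0,j}$. Writing $a_j := U^h_{0,j}$, the ghost values $U^h_{-1,\cdot}=U^h_{\cdot,-1}=0$ turn (S) (equivalently \eqref{eq:back-diff}) into the scalar recursion
\[
a_j = \tfrac12 a_{j-1} + h\mu_{0,j} + \tfrac12\sqrt{a_{j-1}^2 + 4h^2\sigma_{0,j}^2}, \qquad j \ge 0,
\]
with the convention $a_{-1}=0$. First I would introduce the increments $v_j := a_j - a_{j-1} - h\mu_{0,j}$, which are nonnegative because (S) selects the positive square root, and record that the telescoping partial sum
\[
S_n := \sum_{j=0}^n v_j = U^h_{0,n} - h\sum_{k=0}^n \mu_{0,k}
\]
is precisely the quantity to be bounded; so the goal becomes $S_n \le C(1+RM^2)\sqrt h$ for $n \le h^{-1}R$.

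The crucial step is an algebraic identity: isolating the square root gives $2v_j + a_{j-1} = \sqrt{a_{j-1}^2 + 4h^2\sigma_{0,j}^2}$, and squaring (both sides are nonnegative) yields
\[
v_j\,(a_{j-1} + v_j) = h^2\sigma_{0,j}^2 \le h^2M^2
\]
for every $j$, since $(0,j)\in\partial\R^2_+$ and $\sigma_{0,j}\le M$ by hypothesis. Because $\mu\ge 0$ we have $a_{j-1} = S_{j-1} + h\sum_{k<j}\mu_{0,k} \ge S_{j-1}$, hence $v_j(S_{j-1}+v_j) \le h^2M^2$, which gives simultaneously $S_{j-1}v_j \le h^2M^2$ and $v_j^2 \le h^2M^2$. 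Then
\[
S_j^2 = (S_{j-1}+v_j)^2 = S_{j-1}^2 + S_{j-1}v_j + v_j(S_{j-1}+v_j) \le S_{j-1}^2 + 2h^2M^2,
\]
and since $S_0 = v_0 = h\sigma_{0,0}\le hM$, iteration gives $S_n^2 \le (2n+1)h^2M^2$. For $n\le h^{-1}R$ this is at most $2RhM^2 + h^2M^2$, so $S_n \le \sqrt{2RhM^2} + hM$; applying $\sqrt t \le \tfrac12(1+t)$ (and $h\le R$, which we may assume) bounds the right-hand side by $C(1+RM^2)\sqrt h$ with $C$ absolute. Reversing the roles of the two indices gives the estimate for $U^h_{i,0}$, and the lemma follows.

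The one place that needs care — and the only real idea in the argument — is that the naïve bound $v_j \le h\sigma_{0,j}\le hM$ is far too lossy: summed over $n\le h^{-1}R$ terms it only controls $S_n$ by a quantity of order $RM$, which does not vanish as $h\to 0$. One must instead exploit the saturation encoded in $v_j(a_{j-1}+v_j)=h^2\sigma_{0,j}^2$: once the partial sum $a_{j-1}$ is appreciable, each increment $v_j$ is automatically forced down to order $h^2/a_{j-1}$. Squaring $S_j$ is exactly the device that linearizes this feedback, replacing a recursion that looks quadratic by the clean telescoping estimate $S_n^2 \le (2n+1)h^2M^2$. Everything else — nonnegativity of the $v_j$ (proved by an easy induction), the base case $a_0 = h\mu_{0,0}+h\sigma_{0,0}$, and the closing manipulation of constants — is routine bookkeeping.
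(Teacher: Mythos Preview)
Your proof is correct and takes a genuinely different route from the paper's. The paper introduces a threshold index $J := \sup\{j \geq 0 : U^h_{0,j} \leq \sqrt{h}\}$; for $j > J$ it Taylor-expands the square root (valid because $a_{j-1} > \sqrt{h}$ dominates the $O(h)$ perturbation under the radical) to get $a_j = a_{j-1} + h\mu_{0,j} + O(h^{3/2}M^2)$, then sums the $O(h^{3/2}M^2)$ errors over at most $h^{-1}R$ steps, while the initial segment $j \leq J$ is controlled directly by the threshold value $\sqrt{h}$. The lower bound is handled separately by monotonicity in $\sigma$. Your approach dispenses with both the threshold split and the Taylor expansion: the exact identity $v_j(a_{j-1}+v_j) = h^2\sigma_{0,j}^2$ together with the squaring device gives the uniform recursion $S_j^2 \leq S_{j-1}^2 + 2h^2M^2$, which treats all indices at once and delivers the lower bound $S_n \geq 0$ for free. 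Your argument is cleaner and more self-contained; the paper's is perhaps more transparent about the mechanism (once $a_{j-1}$ exceeds $\sqrt{h}$, the square root is essentially linear), but at the cost of an extra case analysis.
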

\begin{proof}
Let us give the proof for $i=0$. The case of $j=0$ is similar.  Define
\[J:= \sup \Big\{ j\geq 0 \, : \, U^h_{0,j} \leq \sqrt{h}\Big\}.\]
For $j\geq J$ it follows from the scheme (S) and a Taylor expansion that
\begin{equation*}
U^h_{0,j} = \frac{1}{2} U^h_{0,j-1} + h\mu_{0,j} + \frac{1}{2}U^h_{0,j-1} +  O\Big(h^\frac{3}{2}M^2\Big) = U^h_{0,j-1} + h\mu_{0,j} + O\Big(h^\frac{3}{2}M^2\Big).
\end{equation*}
Iterating we have
\[U^h_{0,j} = h\left(\sum_{k=J+1}^{j} \mu_{0,k}\right) + U^h_{0,J} +O\Big(h^\frac{3}{2}jM^2\Big)=h\left(\sum_{k=J+1}^{j} \mu_{0,k}\right) + O\Big(\sqrt{h} + jh^\frac{3}{2}M^2\Big).\]
Since $j\leq h^{-1}R$ we have
\begin{equation}\label{eq:bc-discrete1}
U^h_{0,j} \leq h\left(\sum_{k=0}^{j} \mu_{0,k}\right) + O\Big(\big(1  + RM^2\big)\sqrt{h}\Big).
\end{equation}
Noting the equivalence of \eqref{eq:scheme1} and \eqref{eq:scheme2} when $\sigma_{0,j}=0$, we can set $\sigma_{0,j}=0$ in \eqref{eq:scheme1} and iterate as before to obtain
\[U^h_{0,j} \geq h\left(\sum_{k=0}^{j} \mu_{0,k}\right).\]
Combining this with \eqref{eq:bc-discrete1} completes the proof.
\end{proof}

\begin{theorem}\label{thm:lip-conv}
  Suppose that $\mu$ and $\sigma^2$ are non-negative, globally Lipschitz continuous on $[0,\infty)^2$ and satisfy \eqref{eq:zeros}, and let $\mu_s$ satisfy (F2).  For $h>0$ let $U^h(x)=U_{\lfloor h^{-1}x_1\rfloor,\lfloor h^{-1}x_2\rfloor}$ denote the extension of the numerical solution $U^h_{i,j}$ of (S) to $[0,\infty)^2$.  Then we have
\begin{equation}\label{eq:scheme-conv}
U^h \longrightarrow U \ \ \ \text{locally uniformly on } [0,\infty)^2,
\end{equation}
where $U$ is the unique monotone viscosity solution of (P).
\end{theorem}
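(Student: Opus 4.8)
The plan is to run the half-relaxed-limits method of Barles and Souganidis, exactly as in~\cite{calder2014,calder2013b}, using the comparison principle of Corollary~\ref{cor:comp} for uniqueness. Fix $R>0$; all estimates below are on $[0,R]^2$. First I would record the basic properties of the scheme. Applying $|a-b|\le\sqrt{(a-b)^2+c^2}\le|a-b|+|c|$ in (S) gives
\[
\max\big(U^h_{i-1,j},U^h_{i,j-1}\big)+h\mu_{i,j}\ \le\ U^h_{i,j}\ \le\ \max\big(U^h_{i-1,j},U^h_{i,j-1}\big)+h\big(\mu_{i,j}+\sigma_{i,j}\big),
\]
so iterating along a monotone lattice path from the origin yields $0\le U^h_{i,j}\le(i+j)h\,C_R$ with $C_R$ depending only on $\|\mu\|_{L^\infty([0,R]^2)}$, $\|\sigma\|_{L^\infty([0,R]^2)}$ and $\|\mu_s\|_\infty$, so $\{U^h\}$ is locally uniformly bounded; and the left inequality shows each $U^h$ is monotone non-decreasing with backward quotients $(U^h_{i,j}-U^h_{i-1,j})/h,\ (U^h_{i,j}-U^h_{i,j-1})/h\ge\mu_{i,j}$. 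I then define the relaxed limits $\bar U=\limsup^{*}U^h$ and $\underline U=\liminf_{*}U^h$ on $[0,\infty)^2$ (in the usual joint sense $h\to0$, $y\to x$); by the bounds these are finite, $\bar U\in\text{USC}([0,\infty)^2)$, $\underline U\in\text{LSC}([0,\infty)^2)$, $\underline U\le\bar U$, and both are monotone non-decreasing.

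Next I would show $\bar U=\underline U=\phi$ on $\partial\R^2_+$. For $x=(x_1,0)$ with $x_1>0$ (the cases $x_1=0$ and $x_1\leftrightarrow x_2$ are similar or easier), Lemma~\ref{lem:discrete-bc} gives $U^h_{i,0}=h\sum_{k=0}^i\mu_{k,0}+O(\sqrt h)$, and since $\mu(\cdot,0)$ is Lipschitz and $\mu_s(\cdot,0)$ is bounded with a locally finite discontinuity set, the Riemann sum converges to $\int_0^{x_1}(\mu(t,0)+\mu_s(t,0))\,dt=\phi(x)$ as $h\to0$, $hi\to x_1$. For an interior point $y$ near $x$, iterating the upper inequality above along the path running right to the $x_1$-axis and then up gives $0\le U^h(y)-U^h_{\lfloor h^{-1}y_1\rfloor,0}\le C_R(y_2+h)$, so $U^h(y)$ has the same upper and lower relaxed limits as $U^h_{\lfloor h^{-1}y_1\rfloor,0}$, namely $\phi(x)$.

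For the interior equation I would appeal to consistency. On interior grid points (S) reads $U^h(x)=G(h,x,U^h(x-he_1),U^h(x-he_2))$ with $G(h,x,p,q)=\tfrac12(p+q)+h\mu(x)+\tfrac12\sqrt{(p-q)^2+4h^2\sigma(x)^2}$, which is monotone ($0\le\partial_pG,\partial_qG\le1$) and translation-invariant in $p,q$; a Taylor expansion gives, for $\psi\in C^\infty$, $h^{-1}\big(\psi(x)-G(h,x,\psi(x-he_1),\psi(x-he_2))\big)\to-\hat H(x,D\psi(x))$, where $\hat H(x,p)=\mu(x)+\tfrac12\sqrt{(p_1-p_2)^2+4\sigma(x)^2}-\tfrac12(p_1+p_2)$ is continuous in $x$. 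A completion-of-squares computation yields the two equivalences
\[
\hat H(x,p)\ge0\iff(p_1-\mu(x))_+(p_2-\mu(x))_+\le\sigma(x)^2,
\]
\[
\hat H(x,p)\le0\iff(p_1-\mu(x))_+(p_2-\mu(x))_+\ge\sigma(x)^2\ \text{and}\ \min(p_1,p_2)\ge\mu(x).
\]
The standard Barles--Souganidis argument (touch $\bar U$ from above / $\underline U$ from below by a smooth $\psi$ at a point of $\R^2_+$, localize the extremum of $U^h-\psi$ at grid points, invoke monotonicity and the scheme equation, send $h\to0$) then shows, via these equivalences, that $\bar U$ is a viscosity subsolution of \eqref{eq:specific-sub} and $\underline U$ is a monotone viscosity supersolution of \eqref{eq:specific-super} on $\R^2_+$.

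Finally I would close the argument. Since $\mu,\sigma^2$ are non-negative, globally Lipschitz and satisfy \eqref{eq:zeros}, and since $\bar U=\phi=\underline U$ on $\partial\R^2_+$, Corollary~\ref{cor:comp} gives $\bar U\le\underline U$ on $\R^2_+$; with $\underline U\le\bar U$ this forces $\bar U=\underline U$ on $[0,\infty)^2$. The common value is a continuous, monotone viscosity solution of (P) with boundary data $\phi$, hence equals the unique such solution $U$ (apply Corollary~\ref{cor:comp} once more in each direction), and $\bar U=\underline U=U$ with $U$ continuous is equivalent to the locally uniform convergence $U^h\to U$ asserted in \eqref{eq:scheme-conv}. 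I expect the main obstacle to be the consistency step: identifying the scheme's effective Hamiltonian $\hat H$ and checking the two equivalences above---in particular seeing that the upwind structure of (S), which forces $\min(\cdot)\ge\mu$ at the discrete level, is precisely what gives $\underline U$ the gradient lower bound needed to apply the comparison principle. A lesser difficulty is handling the $O(\sqrt h)$ boundary layer of Lemma~\ref{lem:discrete-bc} uniformly near $\partial\R^2_+$; beyond that the proof is routine Barles--Souganidis bookkeeping, parallel to~\cite{calder2014,calder2013b}.
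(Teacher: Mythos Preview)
Your proposal is correct and takes essentially the same approach as the paper: both invoke the Barles--Souganidis framework, verifying monotonicity, consistency, and stability of (S), using Lemma~\ref{lem:discrete-bc} for the boundary layer and Corollary~\ref{cor:comp} for the strong uniqueness needed to close the half-relaxed-limits argument. The only differences are cosmetic: you obtain stability by iterating the pointwise bound $U^h_{i,j}\le\max(U^h_{i-1,j},U^h_{i,j-1})+h(\mu_{i,j}+\sigma_{i,j})$ along a lattice path, whereas the paper uses a barrier $V(x)=\|\mu+\mu_s\|_\infty(x_1+x_2)+2\|\sigma\|_\infty\sqrt{x_1x_2}+1$ and a discrete comparison argument; and you check consistency via the explicit form of (S), identifying the effective Hamiltonian $\hat H$ and then verifying the two equivalences with $(p_1-\mu)_+(p_2-\mu)_+\lessgtr\sigma^2$, whereas the paper checks consistency directly against the implicit form~\eqref{eq:back-diff}.
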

\begin{proof}
The proof follows the standard framework outlined by Barles and Souganidis~\cite{barles1991}. This general theory guarantees convergence of any scheme that is monotone, stable, and consistent, provided the PDE enjoys strong uniqueness---a comparison principle for semicontinuous sub- and supersolutions. Corollary \ref{cor:comp} is the required strong uniqueness result, and it is easy to see that the scheme \eqref{eq:back-diff} is both monotone and consistent. Indeed, for any $\psi \in C^1([0,\infty)^2)$ we have
\begin{align*}
&\frac{1}{h^2}\Big( \psi(x) - \psi(x-he_1) - h \mu(x) \Big)_+ \Big( \psi(x) - \psi(x-he_2) - h\mu(x)  \Big )_+ \\
&\hspace{2in}\longrightarrow \Big(\psi_{x_1}(x) - \mu(x)\Big)_+ \Big(\psi_{x_2}(x) - \mu(x)\Big)_+,
\end{align*}
as $h \to 0$, which is the required consistency. To show monotonicity, let $u,v:[0,\infty)^2$ such that $u(x)=v(x)$ and $u \leq v$. Then we have
\begin{align*}
&\Big( v(x) - v(x-he_1) - h \mu(x) \Big)_+ \Big( v(x) - v(x-he_2) - h\mu(x)  \Big )_+ \\
&\hspace{2cm}= \Big( u(x) - v(x-he_1) - h \mu(x) \Big)_+ \Big( u(x) - v(x-he_2) - h\mu(x)  \Big )_+ \\
&\hspace{2cm}\leq \Big( u(x) - u(x-he_1) - h \mu(x) \Big)_+ \Big( u(x) - u(x-he_2) - h\mu(x)  \Big )_+,
\end{align*}
where the last line follows from the monotonicity of $t \mapsto (p_1-t)_+(p_2-t)_+$.

Therefore, to complete the proof, we need to show that the scheme is stable, and that the boundary condition is satisfied.  Stability refers to a bound on $U^h$, independent of $h$.  By Lemma \ref{lem:discrete-bc}, (F2), and the continuity of $\mu$, we have that
\begin{equation}\label{eq:bc-conv}
  U^h \longrightarrow \phi \ \ \ \text{locally uniformly on } \partial \R^2_+ \ \ \text{as } h \to 0,
\end{equation}
where $\phi(x) = (x_1+x_2)\int_0^1 \mu(tx) + \mu_s(tx) \, dt$, which verifies the boundary condition.

Stability follows from a comparison principle for (S), and is similar to~\cite[Lemma 3.3]{calder2013b}.  We give the argument here for completeness. Let 
\[V(x) = \|\mu+\mu_s\|_{\infty}(x_1 + x_2) + 2\|\sigma\|_\infty\sqrt{x_1x_2} + 1.\]
We claim that $U^h(x) \leq V(x)$.   To see this, suppose to the contrary that $U^h(x) > V(x)$ for some $x \in [0,R]^2$, $R>0$. First note that 
\[\phi(x) \leq (x_1 + x_2)\|\mu + \mu_s\|_\infty = V(x) - 1,\]
for $x \in  \partial \R^2_+$.  Therefore, by \eqref{eq:bc-conv}, we have that $U^h \leq V - \frac{1}{2}$ on $[0,R]^2\cap \partial \R^2_+$ for $h$ small enough. Therefore, there exists $z \in [h,R]^2$ such that
\begin{equation}\label{eq:z}
U^h(z) > V(z) \ \ \text{and} \ \ U^h(z-he_i) \leq V(z-he_i) \ \ \text{for } i=1,2.
\end{equation}
Note that by the concavity of $t \mapsto \sqrt{t}$ we have that
\[V(z) - V(z-he_i) \geq h\|\mu+\mu_s\|_\infty + h\|\sigma\|_\infty \frac{\sqrt{z_1z_2}}{z_i}.\]
It follows that
\[\Big( V(z) - V(z-he_1) - h \|\mu+\mu_s\|_\infty \Big) \Big( V(z) - V(z-he_2) - h\|\mu+\mu_s\|_\infty \Big ) \geq h^2\|\sigma\|_\infty^2.\]
By monotonicity of $t \mapsto (p_1-t)_+(p_2-t)_+$ we therefore have
\begin{align}\label{eq:tocontradict}
&\Big( V(z) - V(z-he_1) - h \mu(z) \Big) \Big( V(z) - V(z-he_2) - h\mu(z)  \Big ) \geq h^2\|\sigma\|_\infty^2 \notag \\
&\hspace{0.3in}\geq \Big( U^h(z) - U^h(z-he_1) - h \mu(z) \Big) \Big( U^h(z) - U^h(z-he_2) - h\mu(z)  \Big ).
\end{align}
This contradicts \eqref{eq:z}, hence $U^h \leq V$. The proof is completed by invoking~\cite[Theorem 2.1]{barles1991}.
\end{proof}
We now extend the numerical convergence result to $\mu$ $\sigma^2$ satisfying (F1) and (F3).
\begin{corollary}\label{cor:final-conv}
  Suppose that $\mu$ and $\sigma^2$ simultaneously satisfy (F1), (F3) and \eqref{eq:zeros}, and let $\mu_s$ satisfy (F2).  Define $U^h$ as in Theorem \ref{thm:lip-conv}.  Then we have
\begin{equation}\label{eq:scheme-conv2}
  U^h \longrightarrow U \ \ \ \text{locally uniformly on } [0,\infty)^2,
\end{equation}
where $U$ is the unique monotone viscosity solution of (P).
\end{corollary}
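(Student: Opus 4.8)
The plan is to repeat, at the discrete level, the sup/inf-convolution sandwich used in the proof of Theorem~\ref{thm:final-comp}. Let $\mu^k,\sigma^{2,k}$ and $\mu_k,\sigma^2_k$ be the sup- and inf-convolutions of $\mu$ and $\sigma^2$ as in \eqref{eq:sup-convolution}; by Remark~\ref{rem:conv} these belong to $C^{0,1}([0,\infty)^2)$, satisfy (F1*) with $\theta=1/k$, and obey \eqref{eq:gamma-mu}--\eqref{eq:gamma-sigma}, while $\mu_k\le\mu\le\mu^k$ and $\sigma^2_k\le\sigma^2\le\sigma^{2,k}$ pointwise. Denote by $U^h$, $U^{h,k}$, $U^h_k$ the numerical solutions of (S) built from the coefficient pairs $(\mu,\sigma^2)$, $(\mu^k,\sigma^{2,k})$, $(\mu_k,\sigma^2_k)$ (each with the common source $\mu_s$), and by $U=U_{\mu+\mu_s,\sigma}$, $U^k=U_{\mu^k+\mu_s,\sqrt{\sigma^{2,k}}}$, $U_k=U_{\mu_k+\mu_s,\sqrt{\sigma^2_k}}$ the corresponding viscosity solutions of (P).

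First I would record the discrete sandwich $U^h_k\le U^h\le U^{h,k}$ on the whole grid. From \eqref{eq:scheme1} the update $(U^h_{i-1,j},U^h_{i,j-1},\mu_{i,j},\sigma^2_{i,j})\mapsto U^h_{i,j}$ is non-decreasing in every argument: in the first two this is precisely the monotonicity of the scheme established in the proof of Theorem~\ref{thm:lip-conv}, and in $\mu_{i,j}$ and $\sigma^2_{i,j}$ it is read off directly from \eqref{eq:scheme1}. Since the three coefficient triples are ordered pointwise and the off-grid rule $U^h_{i,j}=0$ is shared by all three schemes, an induction over grid points in any admissible sweep order (visiting $(i-1,j)$ and $(i,j-1)$ before $(i,j)$) gives the inequalities.

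Next I would take $h\to 0$ with $k$ fixed, then $k\to\infty$. For fixed $k$ the coefficients $\mu^k,\sigma^{2,k}$ (and likewise $\mu_k,\sigma^2_k$) are non-negative, globally Lipschitz, and --- exactly as in the proof of Theorem~\ref{thm:final-comp} --- they inherit \eqref{eq:zeros}, so Theorem~\ref{thm:lip-conv} applies and yields $U^{h,k}\to U^k$ and $U^h_k\to U_k$ locally uniformly as $h\to 0$. On the other hand $\ell_{\mu,\sigma}$ is non-decreasing in both $\mu$ and $\sigma$ (since $\gamma'\geqq0$ for curves in $\A$), so $U_k\le U\le U^k$; and Corollary~\ref{cor:perturbation} --- whose hypotheses hold for both convolution sequences by Remark~\ref{rem:conv} --- gives $U_k\to U$ and $U^k\to U$ locally uniformly as $k\to\infty$.

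To conclude, fix a compact $K\subset[0,\infty)^2$ and $\eps>0$. Choose $k$ with $\sup_K(U^k-U_k)<\eps/2$, then $h$ small enough that $\sup_K|U^{h,k}-U^k|<\eps/4$ and $\sup_K|U^h_k-U_k|<\eps/4$. Combining the discrete sandwich with $U_k\le U\le U^k$ gives, on $K$,
\[
U^h-U\le (U^{h,k}-U^k)+(U^k-U_k)<\tfrac{3\eps}{4},\qquad U-U^h\le (U-U_k)+(U_k-U^h_k)<\tfrac{3\eps}{4},
\]
hence $\sup_K|U^h-U|<\eps$ for all small $h$, which is the asserted locally uniform convergence. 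There is no serious obstacle: the statement is essentially a corollary of Theorems~\ref{thm:lip-conv} and~\ref{thm:final-comp}. The only points that need to be checked with care are the monotonicity of \eqref{eq:scheme1} in the coefficients (routine) and the fact that the regularized coefficients still satisfy \eqref{eq:zeros}, so that the strong-uniqueness hypothesis of Theorem~\ref{thm:lip-conv} is available for each $k$.
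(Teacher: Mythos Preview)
Your proposal is correct and follows essentially the same approach as the paper's own proof: sandwich $U^h$ between the numerical solutions $U^h_k$ and $U^{h,k}$ built from the inf- and sup-convolutions, invoke Theorem~\ref{thm:lip-conv} for each fixed $k$, and use Corollary~\ref{cor:perturbation} to collapse the sandwich. The paper phrases the discrete inequality $U^h_k\le U^h\le U^{h,k}$ as ``a comparison principle for (S)'' rather than monotonicity of \eqref{eq:scheme1} in its coefficients, but these are the same observation.
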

\begin{proof}
Define $\mu^k,\sigma^k,\mu_k,\sigma_k,U_k$ and $U^k$ as in the proof of Theorem \ref{thm:final-comp}. 
By definition we have $U_k\leq U\leq U^k$, and by Corollary \ref{cor:perturbation} and Remark \ref{rem:conv} we have $U_k,U^k \to U$ locally uniformly on $[0,\infty)^2$ as $k \to \infty$.

  Let $U^h_k$ and $U^{k,h}$ denote the numerical solutions defined by (S) for $\mu_k+\mu_s,\sigma_k$ and $ \mu^k + \mu_s,\sigma^k$, respectively, extended to $[0,\infty)^2$ as in Theorem \ref{thm:lip-conv}.  Since $\mu^k,\sigma^{2,k},\mu_k$, and $\sigma^2_k$ are Lipschitz continuous and $\mu_s$ satisfies (F2), we can apply Theorem \ref{thm:lip-conv} to show that 
\begin{equation}\label{eq:convergence}
U^h_k \longrightarrow U_k \ \ \text{and} \ \ U^{k,h} \longrightarrow U^k,
\end{equation}
locally uniformly on $[0,\infty)^2$ as $h \to 0$. Since $\mu_k\leq \mu\leq \mu^k$ and $\sigma_k \leq \sigma \leq \sigma^k$, we can make an argument, as in Theorem \ref{thm:lip-conv}, based on a comparison principle for (S), to show that $U^h_k \leq U^h \leq U^{k,h}$ for all $h,k$.  The proof is completed by combining this with \eqref{eq:convergence} and the locally uniform convergence $U_k,U^k \to U$.
\end{proof}

\subsection{Numerical simulations}
\label{sec:sim}

We present here some numerical simulations comparing the numerical solutions of (P), computed by (S), to realizations of directed last passage percolation (DLPP).  We restrict our attention to the box $[0,1]^2$ for simplicity.  For the case of exponential DLPP, we consider three macroscopic means, $\lambda_1,\lambda_2,$ and $ \lambda_3$ given by
\begin{equation}\label{eq:lambda1}
\lambda_1(x) = \begin{cases}
1,& \text{if } x_1 \geq 0.5 \text{ or } x_2 \geq 0.5, \\
0,& \text{otherwise},
\end{cases}
\end{equation} 
\begin{equation}\label{eq:lambda2}
\lambda_2(x) = \text{exp}\left(-10\left|x - (0.25,0.75)\right|^2\right) + \text{exp}\left(-10\left|x - (0.75,0.25)\right|^2\right),
\end{equation} 
and
\begin{equation}\label{eq:lambda3}
\lambda_3(x) = \begin{cases}
0.5,& \text{if } |x-(1,0)|^2 \leq 0.49 \text{ or } |x-(0,1)|^2 \leq 0.49,\\
1,& \text{otherwise.}
\end{cases}
\end{equation} 
Since the results are very similar for geometric DLPP, we consider only one macroscopic parameter $q$ given by
\begin{equation}\label{eq:q}
q(x) = \begin{cases}
0.5,& \text{if } x_1 \geq 0.5 \text{ or } x_2 \geq 0.5, \\
1,& \text{otherwise}.
\end{cases}
\end{equation}

Figure \ref{fig:demo1-1T} compares the level sets of the numerical solutions of (P) with simulations of exponential/geometric DLPP on a $1000\times 1000$ grid.    The smooth curves correspond to the level sets of the  numerical solution of (P) while the rough curves correspond to the level sets of the last passage time from the  DLPP simulation.  Figure \ref{fig:demo1-5T} shows the same comparison, except for DLPP simulations on a $5000\times 5000$ grid.  In both cases, the numerical solutions of (P) were computed on a $1000\times1000$ grid.  To give an idea of the computational complexity, it takes approximately a quarter of a second to numerically solve the PDE on this grid in MATLAB on an average laptop.
\begin{figure}[t!]
        \centering
        \subfigure[Exponential DLPP with mean $\lambda_1$]{
                \includegraphics[width=0.45\textwidth,clip=true,trim=30 30 30 25]{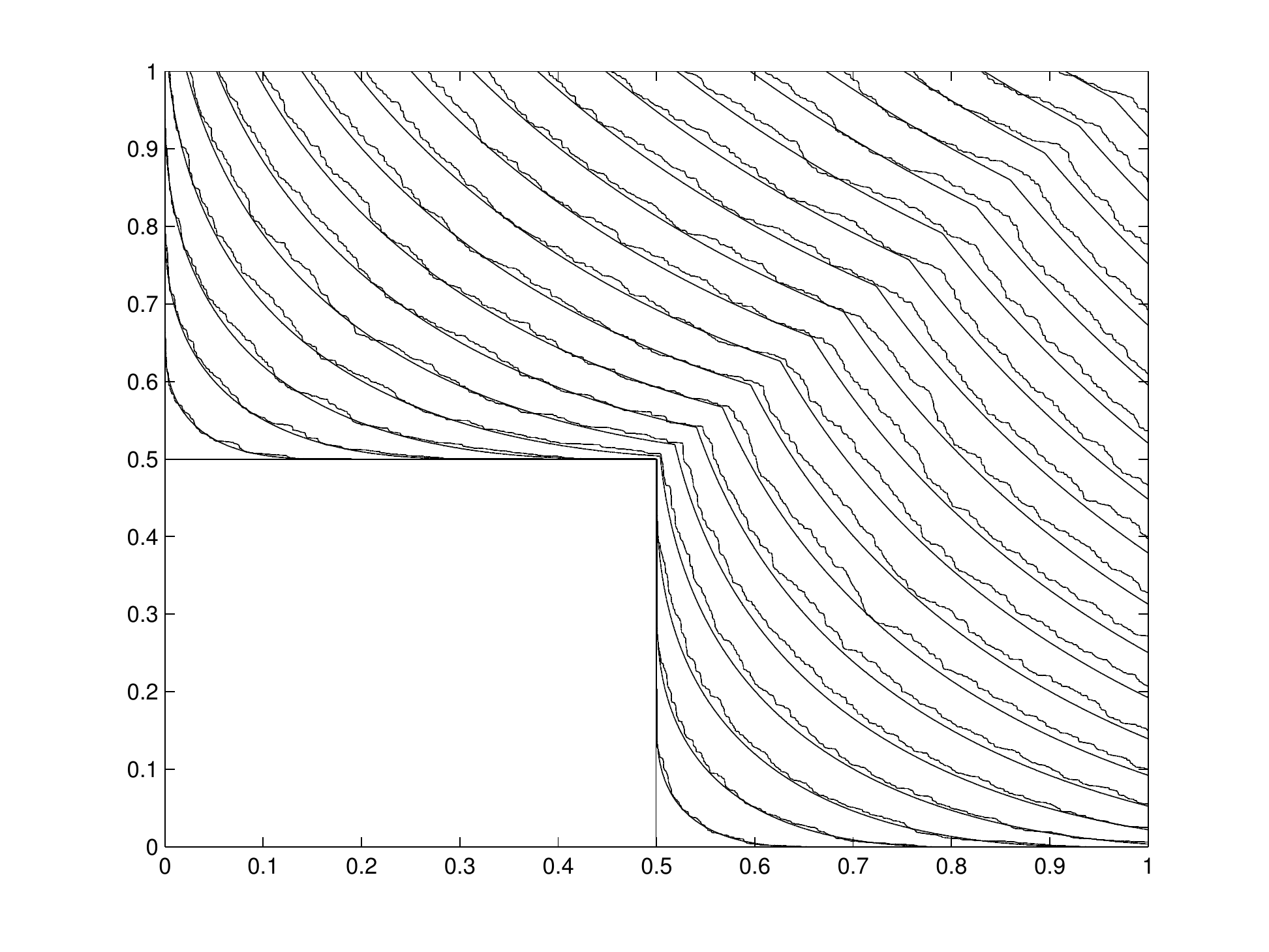}
                \label{fig:demo1_exp1T}}
        \subfigure[Geometric DLPP with parameter $q$]{
                \includegraphics[width=0.45\textwidth,clip=true,trim=30 30 30 25]{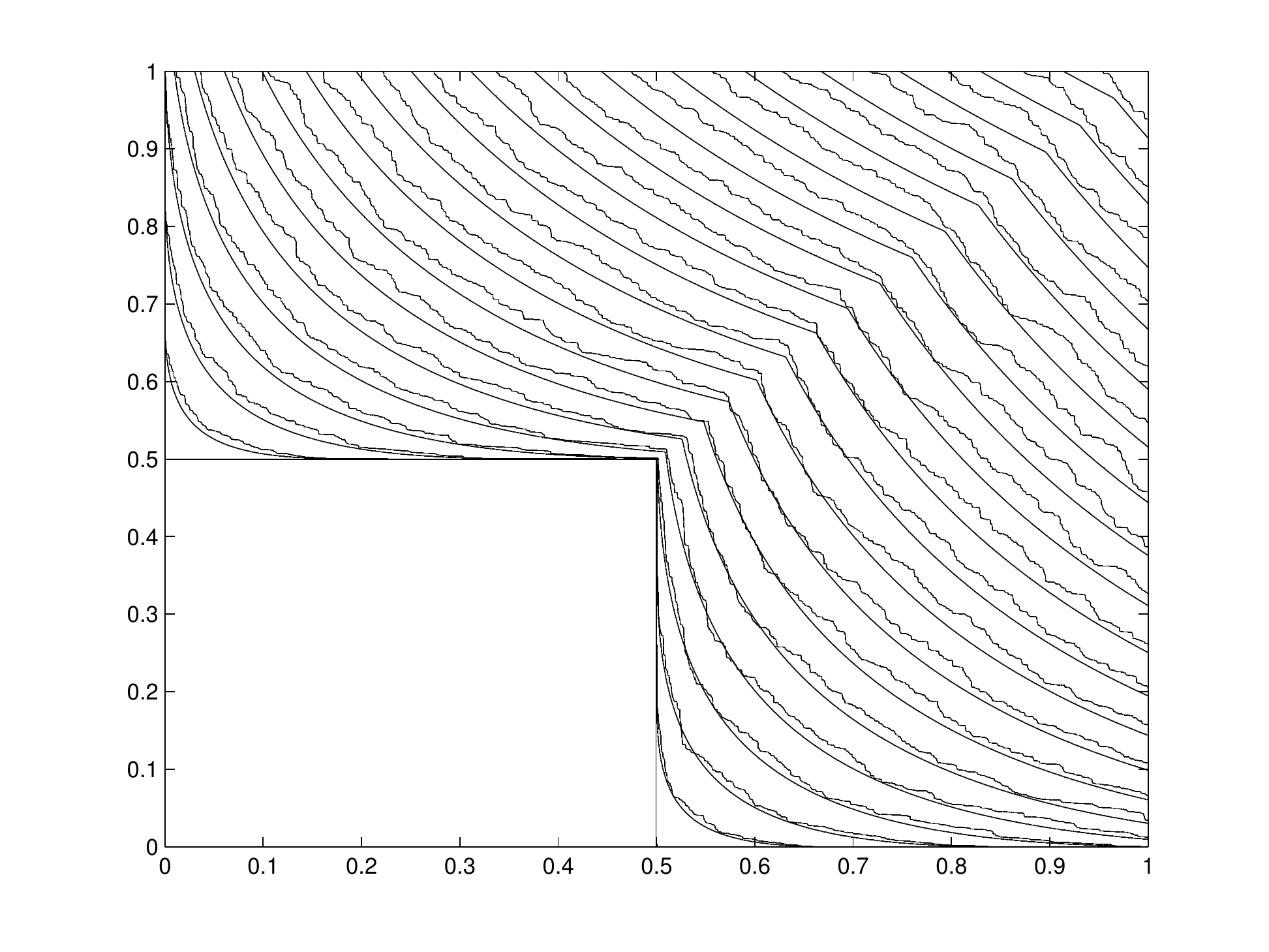}
                \label{fig:demo1_geo1T}}

       \subfigure[Exponential DLPP with mean $\lambda_2$]{
                \includegraphics[width=0.45\textwidth,clip=true,trim=30 30 30 25]{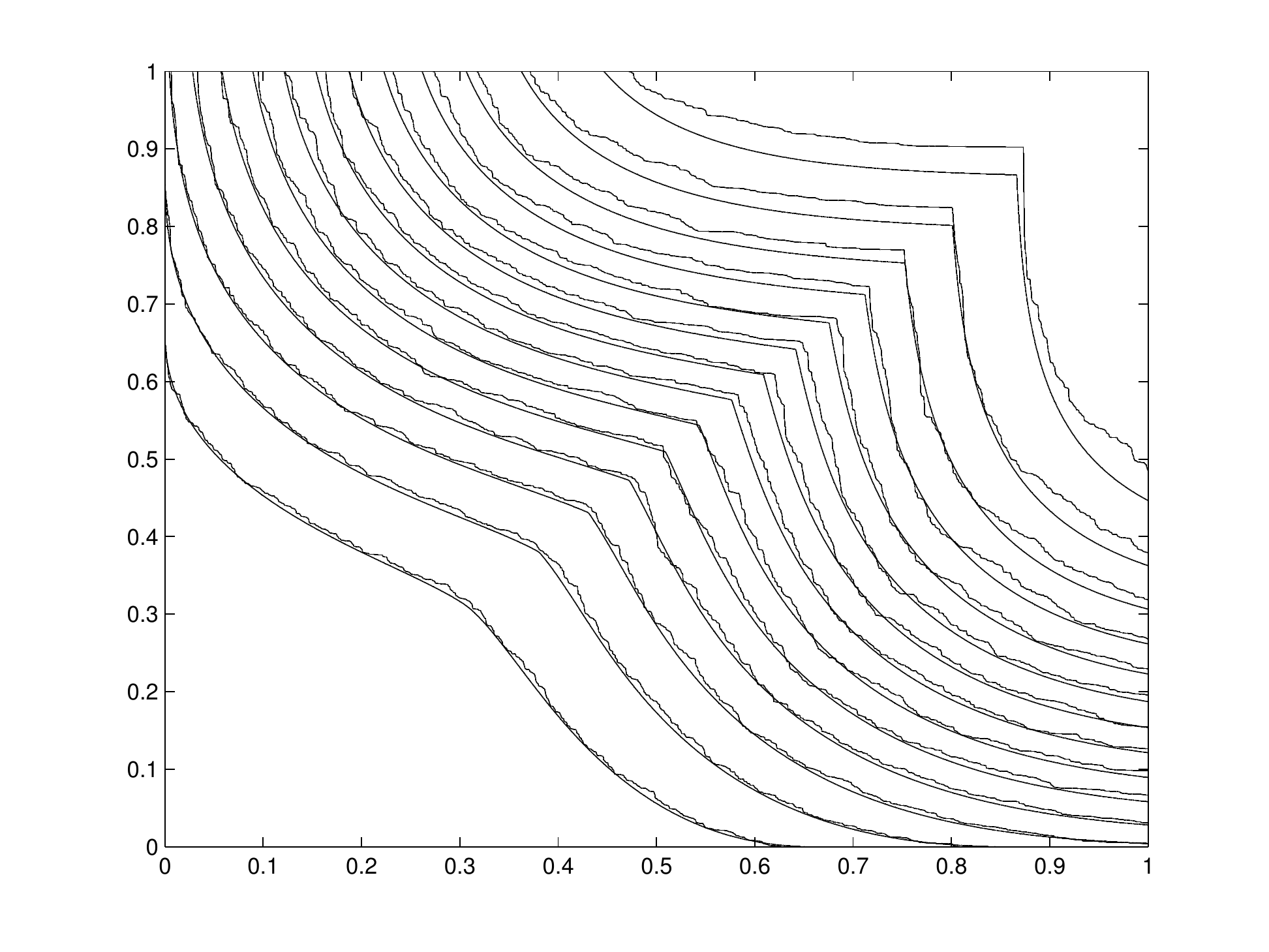}
                \label{fig:demo3_exp1T}}
       \subfigure[Exponential DLPP with mean $\lambda_3$]{
                \includegraphics[width=0.45\textwidth,clip=true,trim=30 30 30 25]{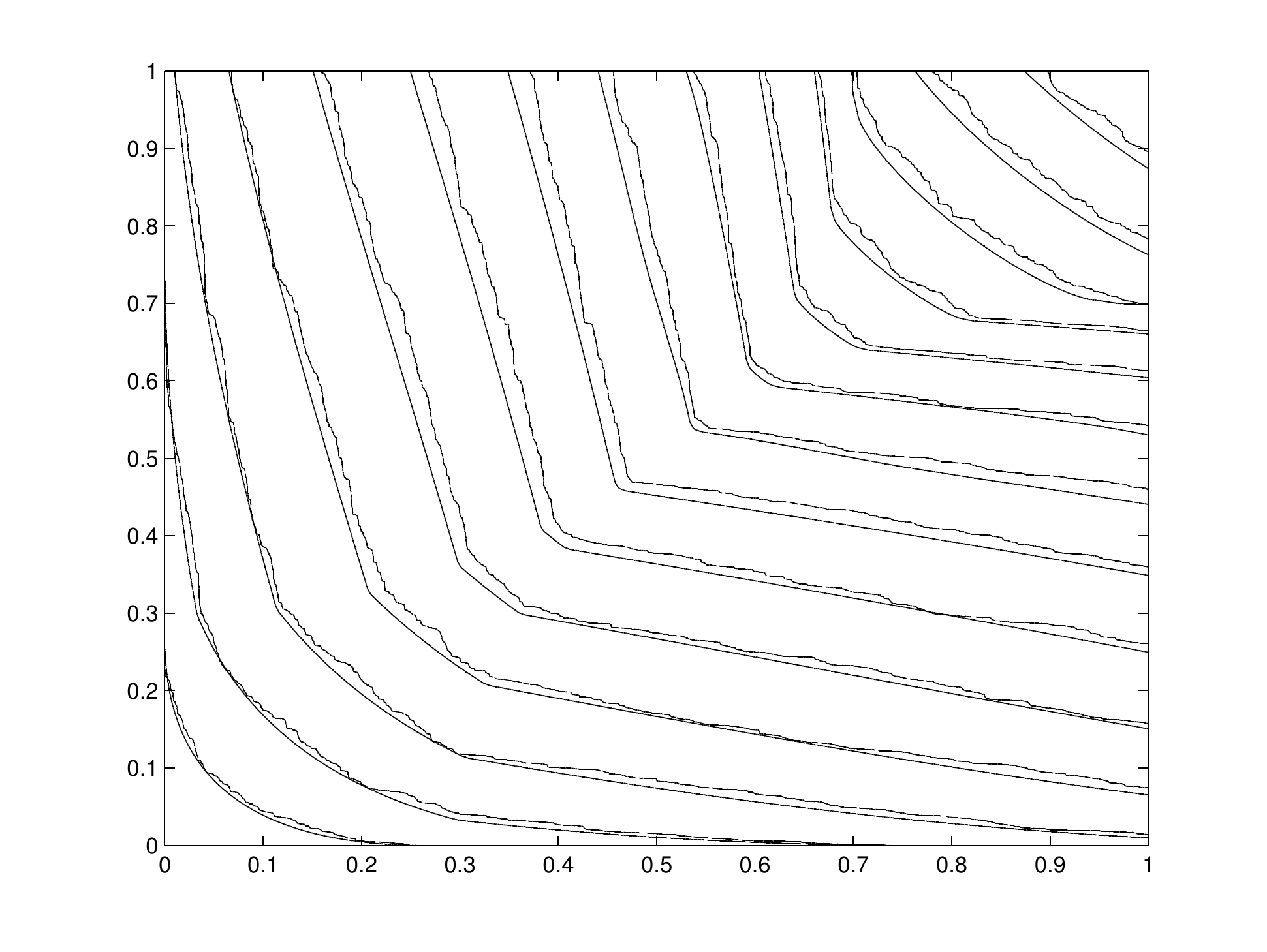}
                \label{fig:demo4_exp1T}}
        \caption{Comparisons of the level sets of numerical solutions of (P), computed via (S), and the level sets of exponential/geometric DLPP simulations on a $1000\times 1000$ grid. The smooth lines correspond to the numerical solutions of (P), while the rough lines correspond to the DLPP simulations. }
        \label{fig:demo1-1T}
\end{figure}
\begin{figure}[t!]
        \centering
        \subfigure[Exponential DLPP with mean $\lambda_1$]{
                \includegraphics[width=0.45\textwidth,clip=true,trim=30 30 30 25]{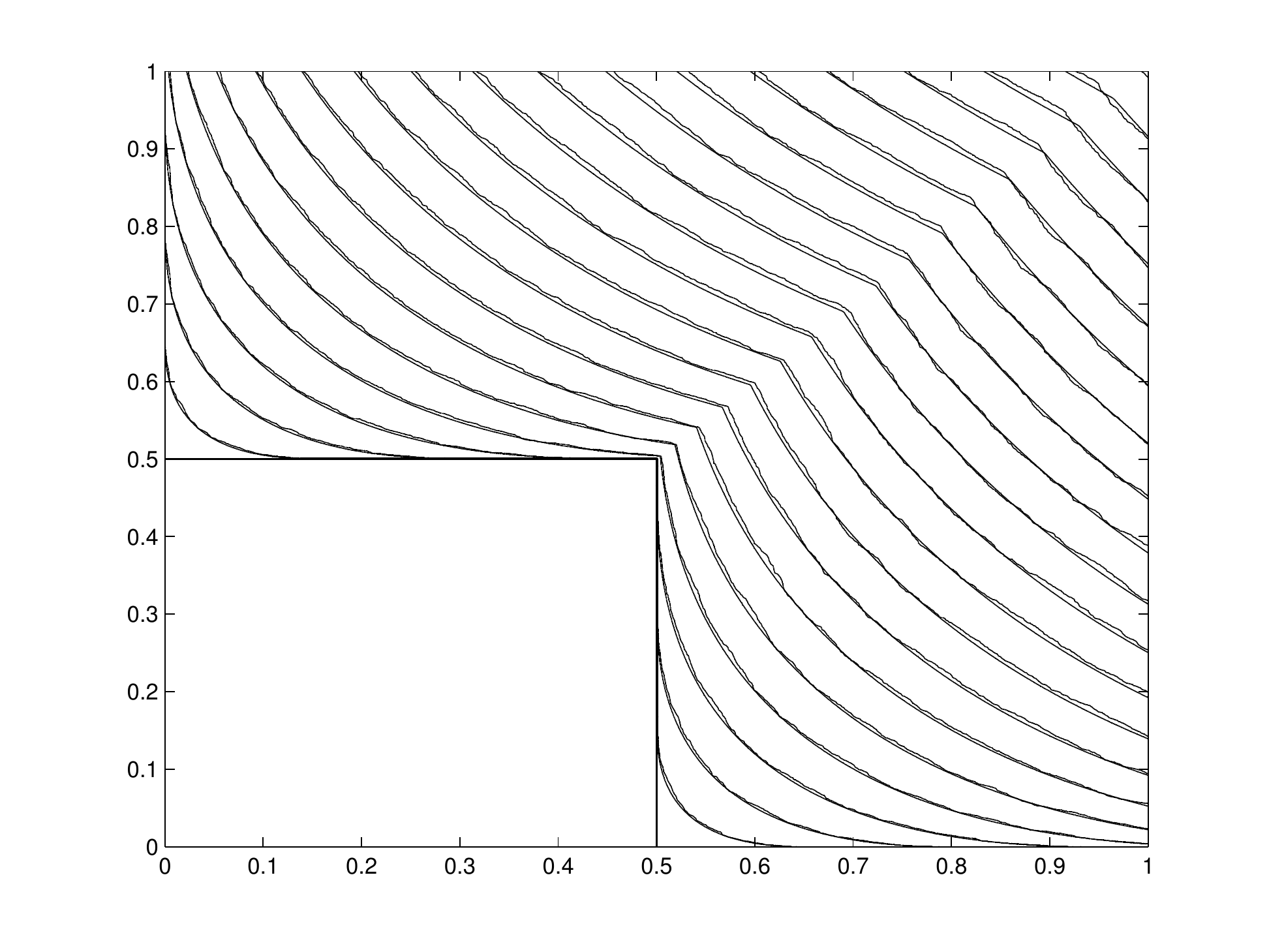}
                \label{fig:demo1_exp5T}}
        \subfigure[Geometric DLPP with parameter $q$]{
                \includegraphics[width=0.45\textwidth,clip=true,trim=30 30 30 25]{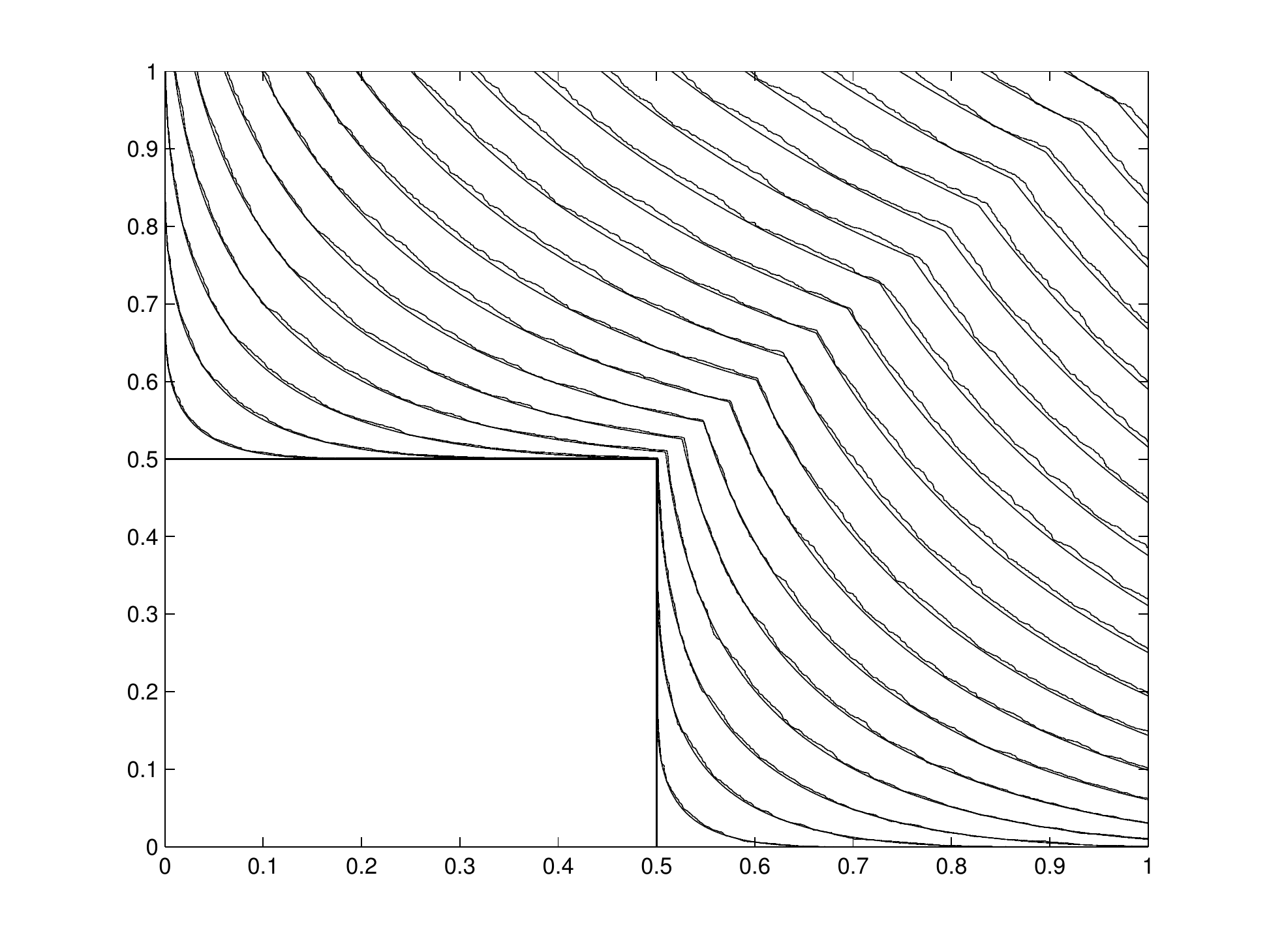}
                \label{fig:demo1_geo5T}}

       \subfigure[Exponential DLPP with mean $\lambda_2$]{
                \includegraphics[width=0.45\textwidth,clip=true,trim=30 30 30 25]{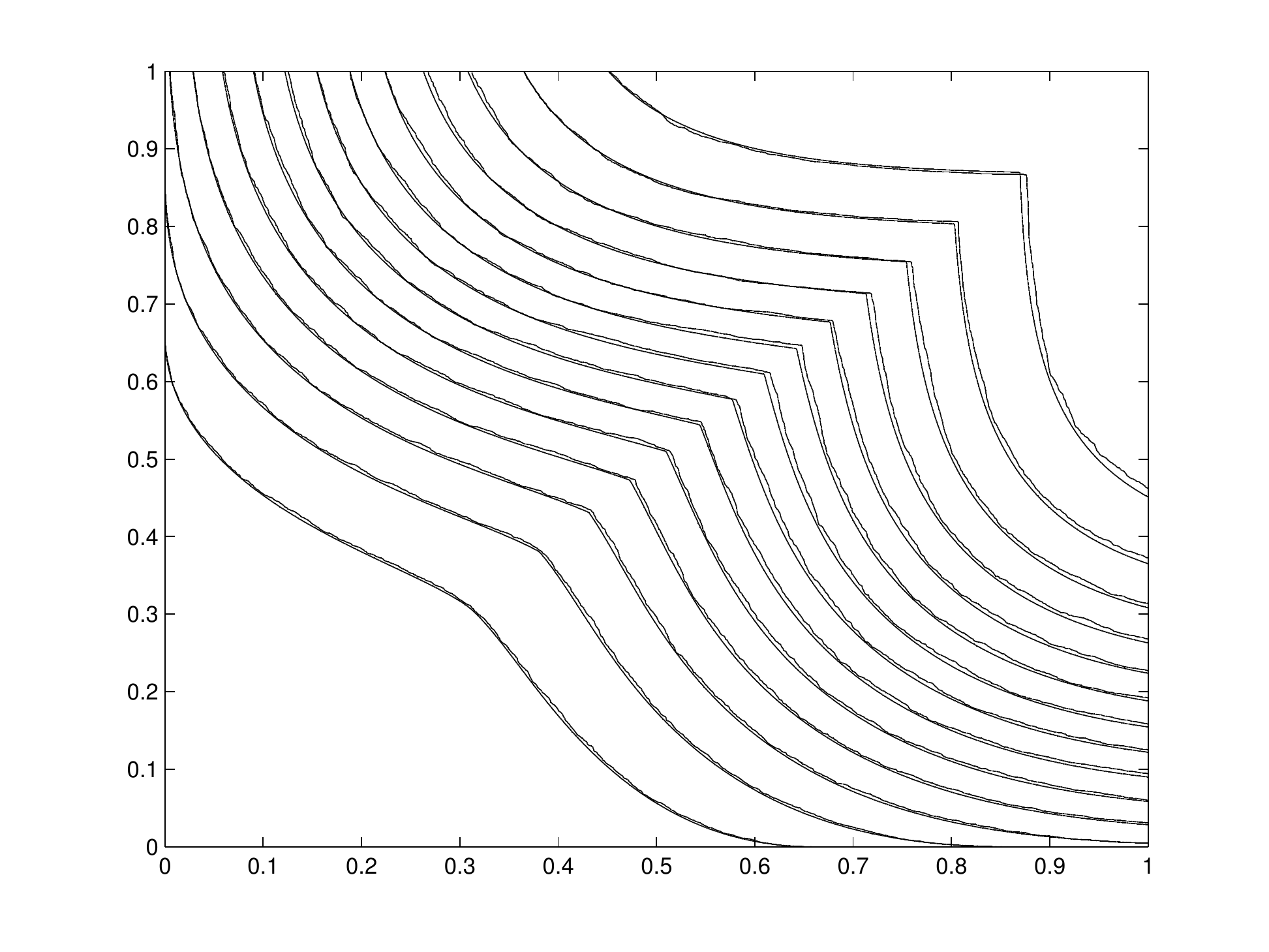}
                \label{fig:demo3_exp5T}}
       \subfigure[Exponential DLPP with mean $\lambda_3$]{
                \includegraphics[width=0.45\textwidth,clip=true,trim=30 30 30 25]{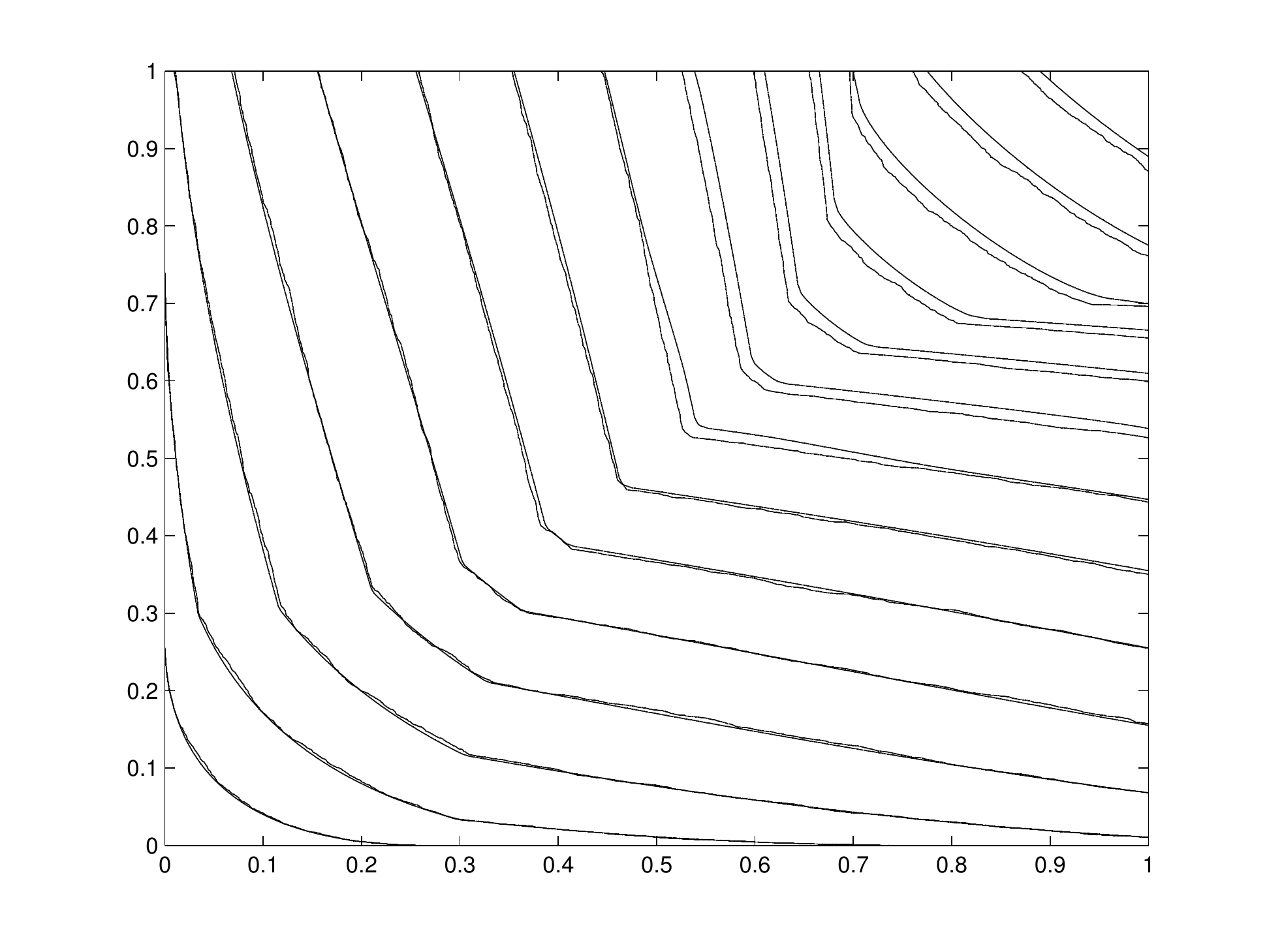}
                \label{fig:demo4_exp5T}}
               \caption{Comparisons of the level sets of numerical solutions of (P), computed via (S), and the level sets of exponential/geometric DLPP simulations on a $5000\times 5000$ grid. }
        \label{fig:demo1-5T}
\end{figure}

\subsection{Finding maximal curves}
\label{sec:max-curve}

We now propose an algorithm based on dynamic programming for finding maximizing curves, and we prove in Theorem \ref{thm:max-curve} and Corollary \ref{cor:max-curve2} that the curve produced by our algorithm is approximately optimal for the variational problem \eqref{eq:vardefU} defining $U$. Other approaches to finding maximizing curves, such as the method of characteristics~\cite{evansPDE}, or solving the Euler-Lagrange equations~\cite{rolla2008}, are not guaranteed to produce optimal curves, due to crossing characteristics, and the possibility of local minima. Our method is related to the method of synthesis in optimal control theory for computing optimal controls from solutions of Hamilton-Jacobi-Bellman equations~\cite{bardi1997}. 

Our algorithm has a parameter $\eps>0$ and a starting point $x \in \R^2_+$, and computes a curve $\gamma_\eps$ with $\gamma_\eps(0)=0$ and $\gamma_\eps(1)=x$ that nearly  maximizes $J$.  The algorithm works by starting at $x$ and tracing our way back to the origin by solving a series of dynamic programming problems. We set $x_0=x$, and generate $x_1,\dots,x_k,\dots$ as follows:  Given we are at step $k\geq 0$,  we use a dynamic programming principle (similar to Proposition \ref{prop:dpp}) to write 
\begin{equation}\label{eq:dynamic}
U(x_k) = \max_{s \in [0,1]} \Big\{ U(y(s)) + W(y(s),x_k)\Big\},
\end{equation}
where $y(s) =x_k - (1-s,s)\eps$. An application of H\"older's inequality yields
\begin{equation}\label{eq:app-holder}
J(\gamma) \leq \mu^*(x_k) \eps + 2\sigma^*(x_k) \eps\sqrt{s(1-s)} + o(\eps),
\end{equation}
for any $\gamma \in \A$ with $\gamma(0)=y(s)$ and $\gamma(1)=x_k$.  When $\mu$ and $\sigma$ are continuous, this upper bound can be attained (in the limit as $\eps\to0$) by the diagonal curve $\gamma(t) = (1-t)y(s) + x_kt$. Thus we are justified in making the following approximation 
\begin{equation}\label{eq:second}
W(y(s),x_k)=\sup_{\gamma \in \A \, : \,  \gamma(0)=y(s), \gamma(1)=x_k} J(\gamma) \approx \mu(x_k)\eps  + 2\sigma(x_k) \eps \sqrt{s(1-s)}.
\end{equation}
Substituting \eqref{eq:second} into \eqref{eq:dynamic} we find that
\begin{equation}\label{eq:approx-dynamic}
U(x_k) \approx \mu(x_k)\eps + \max_{s \in [0,1]} \left\{ U(y(s)) + 2\sigma(x_k)\eps\sqrt{s(1-s)} \right\}.
\end{equation}
We then define 
\begin{equation}\label{eq:iter}
x_{k+1}:=y(s^*_k)_+ = (x_k - (1-s^*_k,s^*_k)\eps)_+,
\end{equation}
where $s^*_k\in [0,1]$ is the maximizing argument in \eqref{eq:approx-dynamic} and $x_+ = (\max(x_1,0),\max(x_2,0))$.  The algorithm is terminated as soon as $x_k \in \partial \R^2_+$ and we append the final terminal point $x_{k+1} = 0$. In \eqref{eq:approx-dynamic}, we set $U(y(s)) = 0$ whenever $y(s) \not\in [0,\infty)^2$.
%Of course, the value function $U$ is not know exactly, and in practice we must use some approximation, say  by solving the scheme (S), in place of $U$. 
The algorithm is summarized in Algorithm \ref{alg:max-curve}. 
\begin{algorithm*}[h]
%Suppose we are given an approximation $\hat{U}$ of $U$.  
\normalsize Given a step size $\eps>0$ and $x_0 \in \R^2_+$, we generate $x_1,\dots,x_k,\dots$ as follows:\\
$k=0$;\\
\While{$x_k \in \R^2_+$} {
$s^*_k= \argmax_{s \in [0,1]} \left\{ U(x_k - (1-s,s)\eps) + 2\sigma(x_k)\eps\sqrt{s(1-s)} \right\}$\;
$x_{k+1} = (x_k - (1-s^*_k,s^*_k) \eps)_+$\;}
$x_{k+1} = 0$;
\caption{Find $\eps$-optimal curve}\label{alg:max-curve}
\end{algorithm*}

 Notice that the boundary source $\mu_s$ does not appear explicitly in Algorithm \ref{alg:max-curve}, though it does appear implicitly through the solution $U$ of (P). Each step of the algorithm moves a distance of at least $\eps/2$ in the direction $(-1,0)$ or $(0,-1)$.  If $x_0 \in [0,R]^2$, then the algorithm will terminate in at most $4R/\eps$ steps.  Furthermore, when $\mu$ and $\sigma^2$ are Lipschitz, we can show that the polygonal curve $\gamma_\eps$ generated by Algorithm \ref{alg:max-curve} has energy within $O(\eps)$ of the maximizing curve.  This is summarized in the following result.
\begin{theorem}\label{thm:max-curve}
  Let $R>0$, suppose that $\mu$ and $\sigma^2$ are non-negative,  globally Lipschitz continuous on $[0,R]^2$ with constant $C_{lip}>0$, and suppose that $\mu_s$ satisfies (F2).  Let $\eps>0$, $x_0 \in (0,R]^2$, and let $x_1,\dots,x_K$ be the points generated by Algorithm \ref{alg:max-curve}. Let $\gamma_\eps:[0,1]\to [0,\infty)^2$ be the monotone polygonal curve passing through $x_K,x_{K-1},\dots,x_1,x_0$. Then there exists a constant $C=C(\|\mu\|_\infty,\|\sigma\|_\infty)>0$ such that
\begin{equation}\label{eq:max-energy}
  U_{\mu+\mu_s,\sigma}(x_0)\leq J_{\mu+\mu_s,\sigma}(\gamma_\eps) + C(1 + C_{lip}R) \eps.
\end{equation}
\end{theorem}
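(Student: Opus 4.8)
The plan is to compare the energy of the polygonal curve $\gamma_\eps$ produced by Algorithm \ref{alg:max-curve} to the value $U_{\mu+\mu_s,\sigma}(x_0)$ by summing the approximate dynamic programming relation \eqref{eq:approx-dynamic} along the generated points $x_0,x_1,\dots,x_K$. First I would establish a one-step estimate: for each $k<K$, the true dynamic programming principle (a translated version of Proposition \ref{prop:dpp}, restricted to the segment $\{x_k-(1-s,s)\eps : s\in[0,1]\}$, which is valid since the line $y\mapsto U(y)+W(y,x_k)$ achieves its max on this segment when the latter lies in the appropriate region) gives
\[
  U(x_k) = \max_{s\in[0,1]}\{U(y(s)) + W(y(s),x_k)\},
\]
while Algorithm \ref{alg:max-curve} instead maximizes the approximate objective $U(y(s)) + \mu(x_k)\eps + 2\sigma(x_k)\eps\sqrt{s(1-s)}$. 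The key is to bound $|W(y(s),x_k) - \mu(x_k)\eps - 2\sigma(x_k)\eps\sqrt{s(1-s)}|$ by $C C_{lip}\eps^2$ uniformly in $s$: the upper bound $W(y(s),x_k)\le \mu^*(x_k)\eps + 2\sigma^*(x_k)\eps\sqrt{s(1-s)} + O(C_{lip}\eps^2)$ follows from Hölder's inequality as in \eqref{eq:app-holder} combined with the Lipschitz bounds on $\mu,\sigma$ over the $O(\eps)$-ball around $x_k$, and the matching lower bound is obtained by testing against the diagonal segment $\gamma(t)=(1-t)y(s)+tx_k$, on which $J$ evaluates to exactly $\mu(x_k)\eps + 2\sigma(x_k)\eps\sqrt{s(1-s)} + O(C_{lip}\eps^2)$. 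Since $\mu$ and $\sigma^2$ are continuous here, $\mu=\mu^*=\mu_*$ and similarly for $\sigma$, so the starred quantities cause no trouble.

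Next I would iterate. From the one-step estimate, using that the approximate objective and the true objective $U(y(s))+W(y(s),x_k)$ differ by at most $C C_{lip}\eps^2$ uniformly, the maximizer $s_k^*$ chosen by the algorithm satisfies
\[
  U(x_k) \le U(x_{k+1}) + \mu(x_k)\eps + 2\sigma(x_k)\eps\sqrt{s_k^*(1-s_k^*)} + C C_{lip}\eps^2,
\]
where I also absorb the effect of the projection $(\cdot)_+$ in \eqref{eq:iter} (when $x_k$ is within $\eps$ of the boundary, replacing $y(s_k^*)$ by its positive part only decreases $U$ since $U$ is monotone, so the inequality is preserved; one also uses $U=0$ off $[0,\infty)^2$). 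Telescoping from $k=0$ to $k=K-1$, and noting $x_K\in\partial\R^2_+$ so that $U(x_K)=\phi(x_K)=J_{\mu+\mu_s,\sigma}(\gamma_\eps|_{[0,t_K]})$ is exactly the energy of the boundary portion of the polygonal curve, gives
\[
  U(x_0) \le \phi(x_K) + \sum_{k=0}^{K-1}\Big(\mu(x_k)\eps + 2\sigma(x_k)\eps\sqrt{s_k^*(1-s_k^*)}\Big) + K C C_{lip}\eps^2.
\]
Since each step moves a coordinate distance $\ge \eps/2$ inside $[0,R]^2$, we have $K\le 4R/\eps$, so the accumulated error is $K C C_{lip}\eps^2 \le 4CR C_{lip}\eps$, which is the $C(1+C_{lip}R)\eps$ term.

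Finally I would recognize the telescoped sum as (approximately) the energy $J_{\mu+\mu_s,\sigma}(\gamma_\eps)$ of the interior portion of the polygonal curve. On the segment from $x_{k+1}=y(s_k^*)$ to $x_k$, the curve $\gamma_\eps$ has direction proportional to $(1-s_k^*,s_k^*)$, so $J_{\mu,\sigma}$ of that segment equals $\mu(\tilde x_k)\eps + 2\sigma(\tilde x_k)\eps\sqrt{s_k^*(1-s_k^*)}$ for some point $\tilde x_k$ on the segment (or, more honestly, an integral over the segment); replacing $\tilde x_k$ by $x_k$ costs another $O(C_{lip}\eps^2)$ per step, again totaling $O(C_{lip}R\eps)$; and since $\mu_s=0$ on $\R^2_+$, $J_{\mu,\sigma}=J_{\mu+\mu_s,\sigma}$ on these interior segments. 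Combining the interior and boundary contributions yields $U_{\mu+\mu_s,\sigma}(x_0)\le J_{\mu+\mu_s,\sigma}(\gamma_\eps) + C(1+C_{lip}R)\eps$. The main obstacle I anticipate is the careful bookkeeping of the boundary behavior: making precise the claim that the true dynamic programming principle holds when restricted to the short diagonal segment (rather than over all of $\partial B_r$), and correctly handling steps where the projection $(\cdot)_+$ activates, i.e.\ where $\gamma_\eps$ partially runs along $\partial\R^2_+$ before reaching the origin. The interior estimates are all routine Hölder-inequality and Lipschitz arguments essentially identical to those already carried out in the proof of Theorem \ref{thm:reg}.
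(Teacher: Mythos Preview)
Your proposal is correct and follows essentially the same route as the paper: a one-step dynamic programming estimate with $O(C_{lip}\eps^2)$ error (via H\"older's inequality and the Lipschitz bounds), telescoping over the $K\le 4R/\eps$ steps, and identification of the resulting sum with $J(\gamma_\eps)$ up to another $O(C_{lip}\eps^2)$ error per segment. Two small slips to watch when you write it out: the projection $(\cdot)_+$ \emph{increases} $U$ (since $(y)_+\geqq y$ componentwise where defined and $U$ is extended by zero off $[0,\infty)^2$), which is the direction you need; and the telescoping should stop at $k=K-2$ since $s_{K-1}^*$ is never computed by the algorithm---the paper resolves the last one or two steps by an explicit two-case split on whether $y(s_{K-2}^*)$ lies in $[0,\infty)^2$, exactly the boundary bookkeeping you flag as the main obstacle.
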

\begin{proof}
  For convenience, we set $U=U_{\mu+\mu_s,\sigma}$, $J=J_{\mu+\mu_s,\sigma}$, and we extend $\mu$, $\sigma$ and $U$ to functions on $\R^2$ by setting $\mu(x)=\sigma(x)=U(x)=0$ for $x \not\in[0,\infty)^2$.  Writing $\Delta t = 1/K$ and $t_j = j\Delta t$ for $j=0,\dots,K$, we can parameterize $\gamma_\eps$ so that 
\begin{equation}\label{eq:gamprime}
\gamma_\eps'(t) = \frac{1}{\Delta t} (x_{K-j} - x_{K-j+1}) = \frac{\eps}{\Delta t} (1-s^*_{K-j},s^*_{K-j}),
\end{equation}
 for $t \in (t_{j-1},t_j)$ and $j\geq 3$. It follows that 
\begin{align}\label{eq:energy1}
\int_{t_2}^1 \ell(\gamma_\eps(t)&,\gamma_\eps'(t))\, dt \\
&=  \sum_{j=3}^K \int_{t_{j-1}}^{t_j} \ell(\gamma_\eps(t),\gamma_\eps'(t))\, dt  \notag \\
&= \eps \sum_{j=3}^K \frac{1}{\Delta t}\int_{t_{j-1}}^{t_j}\mu(\gamma_\eps(t)) + 2\sigma(\gamma_\eps(t)) \sqrt{(1-s^*_{K-j})s^*_{K-j}} \, dt \notag \\
&\geq \eps\sum_{j=3}^K \left(\frac{1}{\Delta t}\int_{t_{j-1}}^{t_j} \mu(x_{K-j}) + 2\sigma(x_{K-j})\sqrt{(1-s^*_{K-j})s^*_{K-j}} \, dt - 3C_{lip} \eps  \right) \notag \\
&= \left(\sum_{j=3}^K \mu(x_{K-j}) + 2\sigma(x_{K-j})\sqrt{(1-s^*_{K-j})s^*_{K-j}} \right)\eps - 3KC_{lip} \eps^2.
\end{align}

An application of H\"older's inequality gives
\begin{equation}\label{eq:Jupper}
J(\gamma) \leq \left(\mu(x_{K-j}) + 2\sigma(x_{K-j})\sqrt{s(1-s)}\right)\eps + 3C_{lip} \eps^2,
\end{equation}
for $j\geq2$ and any $\gamma \in \A$ with $\gamma(0)=y(s)$ and $\gamma(1)=x_{K-j}$. Combining this with the dynamic programming principle \eqref{eq:dynamic} we have
\begin{equation}\label{eq:dynamic2}
U(x_{K-j}) \leq\mu(x_{K-j})\eps +  \max_{s \in [0,1]} \left\{ U(y(s)) +2\sigma(x_{K-j})\eps \sqrt{s(1-s)}) \right\}+ 3C_{lip} \eps^2,
\end{equation}
for all $j \geq 2$.  
By the definition of $s^*_{K-j}$ we have
\begin{equation}\label{eq:dynamic3}
U(x_{K-j})\! \leq\! U(x_{K-j+1})\! +\! \eps\left(\mu(x_{K-j})\! +\! 2\sigma(x_{K-j})\! \sqrt{(1-s^*_{K-j})s^*_{K-j}}\right)\!+\! 3C_{lip} \eps^2,
\end{equation}
for $j\geq 3$.
By iterating this inequality for $j=K,\dots,3$ we have
\begin{align}\label{eq:key-dyn}
U(x_0) &\leq U(x_{K-2}) + \left(\sum_{j=3}^K \mu(x_{K-j}) + 2\sigma(x_{K-j})\sqrt{(1-s^*_{K-j})s^*_{K-j}} \right)\eps + 3KC_{lip}\eps^2\notag \\ 
&\hspace{-1.5mm}\stackrel{\eqref{eq:energy1}}\leq U(x_{K-2}) + \int_{t_2}^1 \ell(\gamma_\eps(t),\gamma'_\eps(t)) \, dt + 6KC_{lip}\eps^2.
\end{align}

We have two cases now.  Suppose first that $y(s^*_{K-2}) \not\in [0,\infty)^2$.  Then $U(y(s^*_{K-2}))=0$ and by \eqref{eq:dynamic2} we have that $U(x_{K-2}) \leq C \eps$.  Combining this with \eqref{eq:key-dyn} we have
\begin{equation}\label{eq:key2}
U(x_0) \leq  J(\gamma_\eps) + C\eps + 6KC_{lip} \eps^2.
\end{equation} 
The proof is completed by noting that $K \leq 4R/\eps$.

Suppose now that $y(s^*_{K-2}) \in [0,\infty)^2$. Then \eqref{eq:dynamic3} holds for $j=2$ and combining this with \eqref{eq:key-dyn} we have
\begin{equation}\label{eq:key3}
U(x_0) \leq U(x_{K-1}) + \int_{t_2}^1 \ell(\gamma_\eps(t),\gamma'_\eps(t)) \, dt  +   6(K+1)C_{lip} \eps^2.
\end{equation}
Since $x_K=0$ we must have $x_{K-1} \in \partial \R^2_+$.  It follows that
\[\int_0^{t_1} \ell(\gamma_\eps(t),\gamma_\eps'(t))\, dt = U(x_{K-1}).\] 
Inserting this into \eqref{eq:key3} we see that
\[U(x_0) \leq J(\gamma_\eps)+  6(K+1)C_{lip} \eps^2.\vspace{-20pt} \]
\end{proof}

If $\mu$ and $\sigma^2$ are not globally Lipschitz continuous, then Algorithm \ref{alg:max-curve} is not guaranteed to yield optimal curves.  However, it can be easily modified to give an algorithm that does.  
\begin{corollary}\label{cor:max-curve2}
Suppose that $\mu$ and $\sigma^2$ simultaneously satisfy (F1), (F3) and \eqref{eq:zeros}, and let $\mu_s$ satisfy (F2).  Let $\mu_k$ and $\sigma_k$ be sequences of functions such that $\mu_k$ and $\sigma^2_k$ are Lipschitz with constant $k$,  $\mu_k \leq \mu$, $\sigma_k \leq \sigma$ and $U_{\mu_k+\mu_s,\sigma_k} \to U_{\mu+\mu_s,\sigma}$ locally uniformly.  Let $x_0 \in (0,R]^2$ and let $\gamma_{k}:[0,1]\to [0,\infty)^2$ be the monotone polygonal curve generated by applying Algorithm \ref{alg:max-curve} to $x_0, \mu_k,\sigma_k$ and $U_k$ with $\eps = k^{-1}(U_{\mu+\mu_s,\sigma}(x_0) -U_{\mu_k+\mu_s,\sigma_k}(x_0))$. Then we have 
\begin{equation}\label{eq:max-energy2}
  U(x_0)\leq J(\gamma_{k}) + o(1) \ \ \text{as } k \to \infty.
\end{equation}
\end{corollary}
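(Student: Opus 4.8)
The plan is to deduce the statement directly from Theorem \ref{thm:max-curve} applied to the Lipschitz data $\mu_k,\sigma_k$, exploiting that the step size
\[
  \eps_k := k^{-1}\bigl(U_{\mu+\mu_s,\sigma}(x_0)-U_{\mu_k+\mu_s,\sigma_k}(x_0)\bigr)
\]
is tuned so that the error term in \eqref{eq:max-energy}, which worsens as the Lipschitz constant grows, nevertheless vanishes in the limit.

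Write $U=U_{\mu+\mu_s,\sigma}$, $U_k=U_{\mu_k+\mu_s,\sigma_k}$, $J=J_{\mu+\mu_s,\sigma}$, and $d_k=U(x_0)-U_k(x_0)$. First I would record elementary monotonicity facts. Assuming (as is automatic for the inf-convolutions below) that $0\le\mu_k\le\mu$ and $0\le\sigma_k\le\sigma$ with $\mu,\sigma$ bounded, we have $\|\mu_k\|_\infty\le\|\mu\|_\infty$ and $\|\sigma_k\|_\infty\le\|\sigma\|_\infty$ for every $k$. Because $\mu_k+\mu_s\le\mu+\mu_s$, $\sigma_k\le\sigma$, and $\gamma'\geqq0$ for $\gamma\in\A$, the pointwise comparison $\ell_{\mu_k+\mu_s,\sigma_k}(x,p)\le\ell_{\mu+\mu_s,\sigma}(x,p)$ holds for $p\geqq0$; comparing the defining suprema \eqref{eq:vardefU} this gives $U_k\le U$ pointwise (so $d_k\ge0$, and $d_k\to0$ by hypothesis), and it gives $J_{\mu_k+\mu_s,\sigma_k}(\gamma)\le J(\gamma)$ for every monotone polygonal curve $\gamma$. (In the degenerate case $d_k=0$, which forces $\mu,\sigma$ to be $k$-Lipschitz near $x_0$ already, one replaces $\eps_k$ by a positive value; since the conclusion concerns only $k\to\infty$, I assume $\eps_k>0$ below.)

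Next I would apply Theorem \ref{thm:max-curve} on $[0,R]^2$ with $\mu_k$ in place of $\mu$, $\sigma_k^2$ in place of $\sigma^2$ (both globally Lipschitz with constant $C_{lip}=k$), $\mu_s$ unchanged, step size $\eps=\eps_k$, and starting point $x_0$; by construction the curve it outputs is $\gamma_k$. Since the constant $C$ of Theorem \ref{thm:max-curve} is non-decreasing in each of $\|\mu\|_\infty$ and $\|\sigma\|_\infty$ (as one checks from its proof), $C^\ast:=C(\|\mu\|_\infty,\|\sigma\|_\infty)$ dominates the constant obtained for every $k$. Theorem \ref{thm:max-curve} then yields
\[
  U_k(x_0)\;\le\;J_{\mu_k+\mu_s,\sigma_k}(\gamma_k)+C^\ast(1+kR)\,\eps_k\;\le\;J(\gamma_k)+C^\ast(1+kR)\,\eps_k .
\]
By the choice of $\eps_k$,
\[
  C^\ast(1+kR)\,\eps_k \;=\; C^\ast\!\left(\tfrac1k+R\right)d_k \;\longrightarrow\; 0\qquad(k\to\infty),
\]
since $\tfrac1k+R\to R$ while $d_k\to0$. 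Hence $U_k(x_0)\le J(\gamma_k)+o(1)$, and substituting $U_k(x_0)=U(x_0)-d_k=U(x_0)-o(1)$ gives \eqref{eq:max-energy2}.

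The only genuine difficulty is this balance: Theorem \ref{thm:max-curve} costs an error of order $C(1+C_{lip}R)\eps$, and because the Lipschitz constants $C_{lip}=k$ must go to infinity to capture the discontinuous $\mu,\sigma$, the step size $\eps$ must be sent to $0$ at least as fast as $1/k$; the additional decay that kills the product comes precisely from the convergence rate $d_k=U(x_0)-U_k(x_0)\to0$ furnished by the hypothesis. Everything else is the monotonicity bookkeeping above. As a consistency check, sequences $\mu_k,\sigma_k$ satisfying all the hypotheses exist: the inf-convolutions of Remark \ref{rem:conv} are $k$-Lipschitz, non-negative, satisfy $\mu_k\le\mu$ and $\sigma_k\le\sigma$, and Corollary \ref{cor:perturbation} supplies the locally uniform convergence $U_k\to U$.
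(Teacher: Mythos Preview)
Your proof is correct and follows essentially the same route as the paper: apply Theorem~\ref{thm:max-curve} to the Lipschitz data $(\mu_k,\sigma_k)$ with $C_{lip}=k$ and $\eps=\eps_k$, use $\mu_k\le\mu$, $\sigma_k\le\sigma$ to pass from $J_{\mu_k+\mu_s,\sigma_k}$ to $J$, and observe that $(1+kR)\eps_k=(k^{-1}+R)d_k\to0$. The paper's proof is terser (it bounds $k^{-1}+R\le 1+R$ and absorbs the extra $d_k$ from $U_k(x_0)\to U(x_0)$ into a single constant $C(2+R)$), while you are more explicit about why the constant $C$ can be taken uniform in $k$ and why $J_k\le J$ and $U_k\le U$; these elaborations are correct and do not change the argument.
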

\begin{proof}
  Let us set $J_k = J_{\mu_k+\mu_s,\sigma_k}$, $J=J_{\mu+\mu_s,\sigma}$, $U_k=U_{\mu_k+\mu_s,\sigma_k}$, and $U=U_{\mu+\mu_s,\sigma}$.  
By Theorem \ref{thm:max-curve} there exists a constant $C=C(\|\mu\|_\infty,\|\sigma\|_\infty)>0$ such that
\[U_k(x_0) \leq J_k(\gamma_{k}) + C(1 + k R)k^{-1}(U(x_0) - U_k(x_0)) \leq J(\gamma_{k}) + C(1 + R)(U(x_0) - U_k(x_0)).\]
It follows that
\[U(x_0) \leq J(\gamma_{k}) + C(2 + R)(U(x_0) - U_k(x_0)).\vspace{-20pt}\]
\end{proof}
\begin{figure}[t!]
        \centering
        \subfigure[Exponential DLPP with mean $\lambda_1$]{
                \includegraphics[width=0.45\textwidth,clip=true,trim=30 30 30 25]{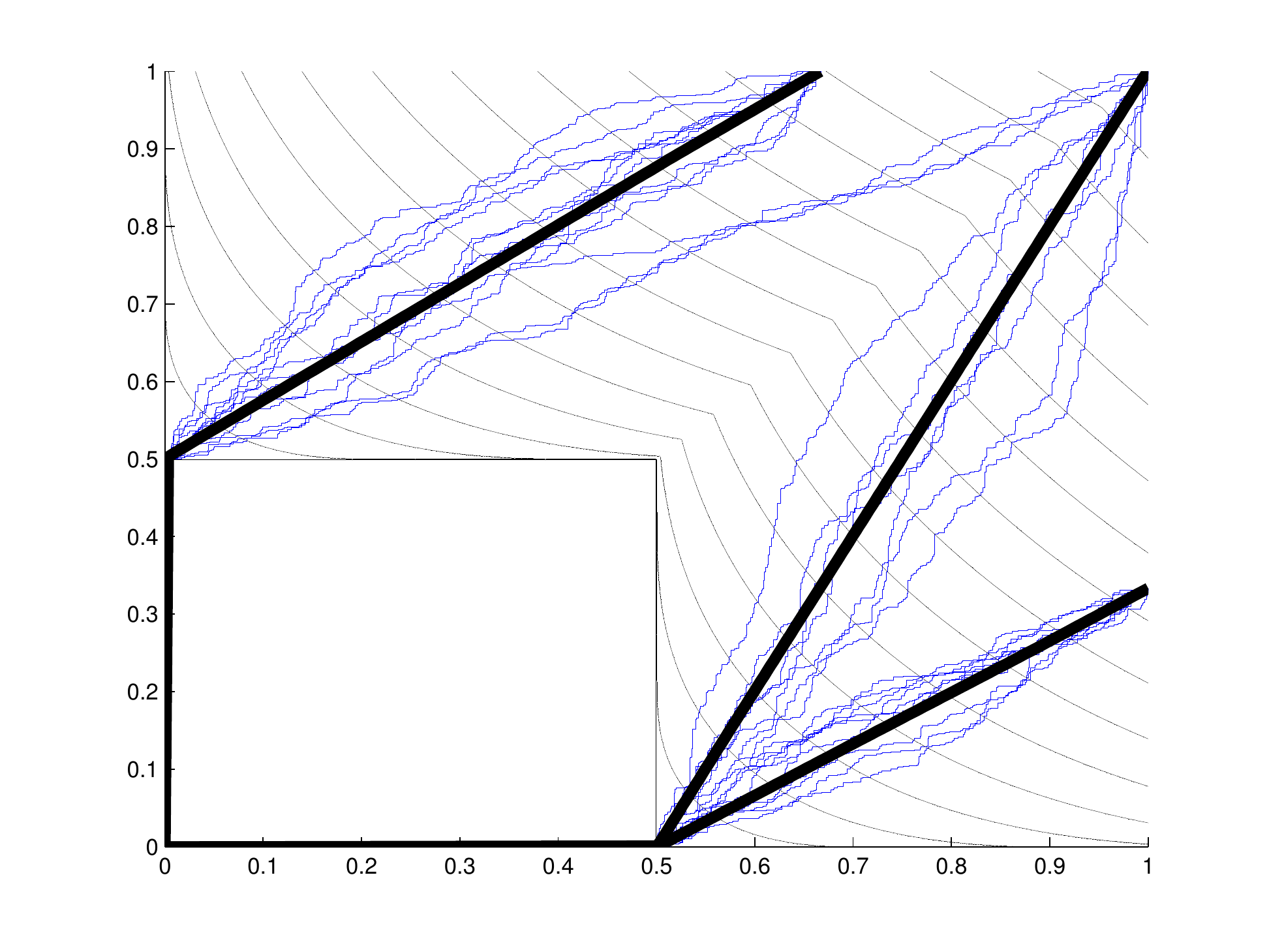}
                \label{fig:demo1_exp_optpath}}
        \subfigure[Geometric DLPP with parameter $q$]{
                \includegraphics[width=0.45\textwidth,clip=true,trim=30 30 30 25]{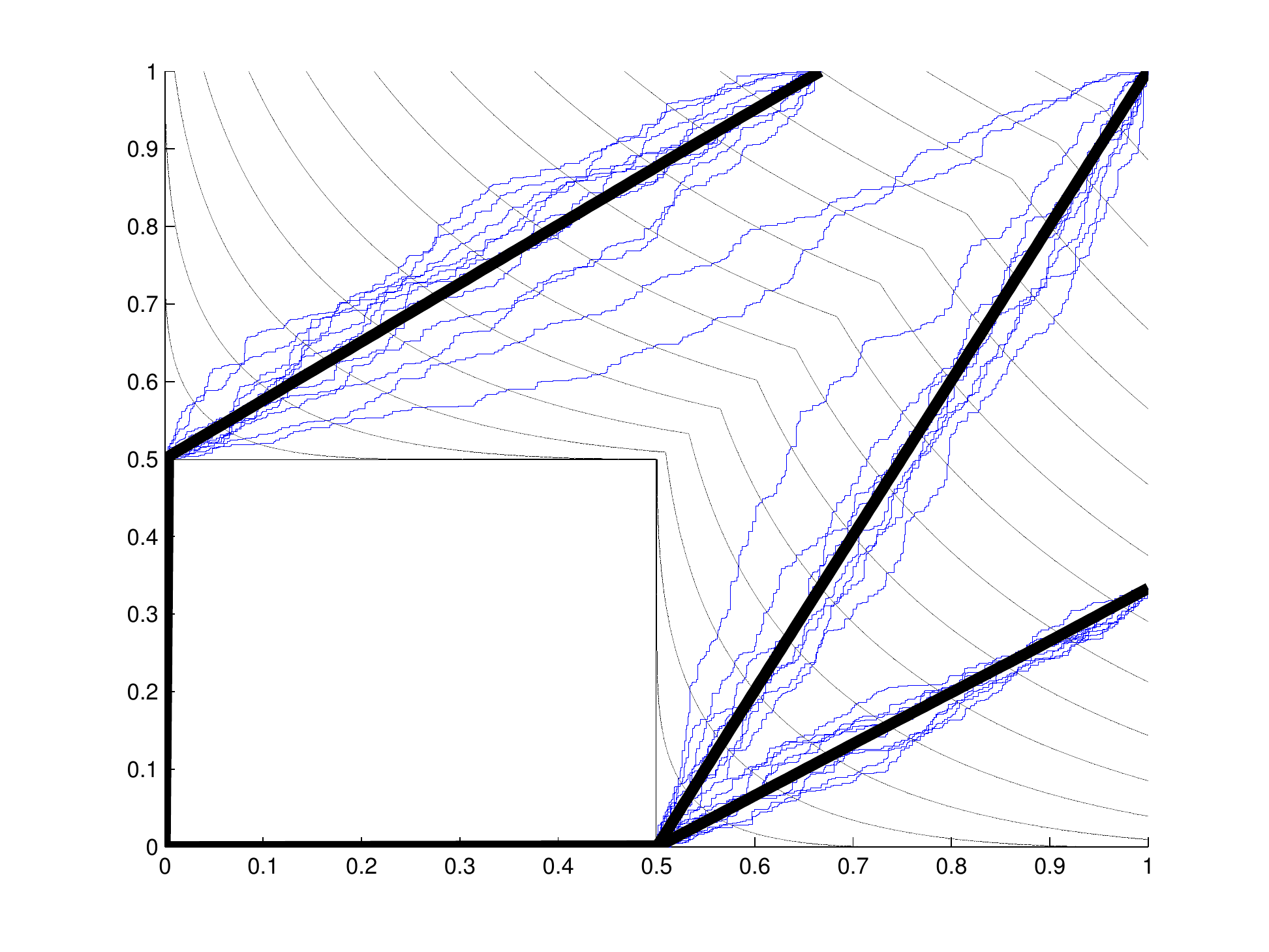}
                \label{fig:demo1_geo_optpath}}

        \subfigure[Exponential DLPP with mean $\lambda_2$]{
                \includegraphics[width=0.45\textwidth,clip=true,trim=30 30 30 25]{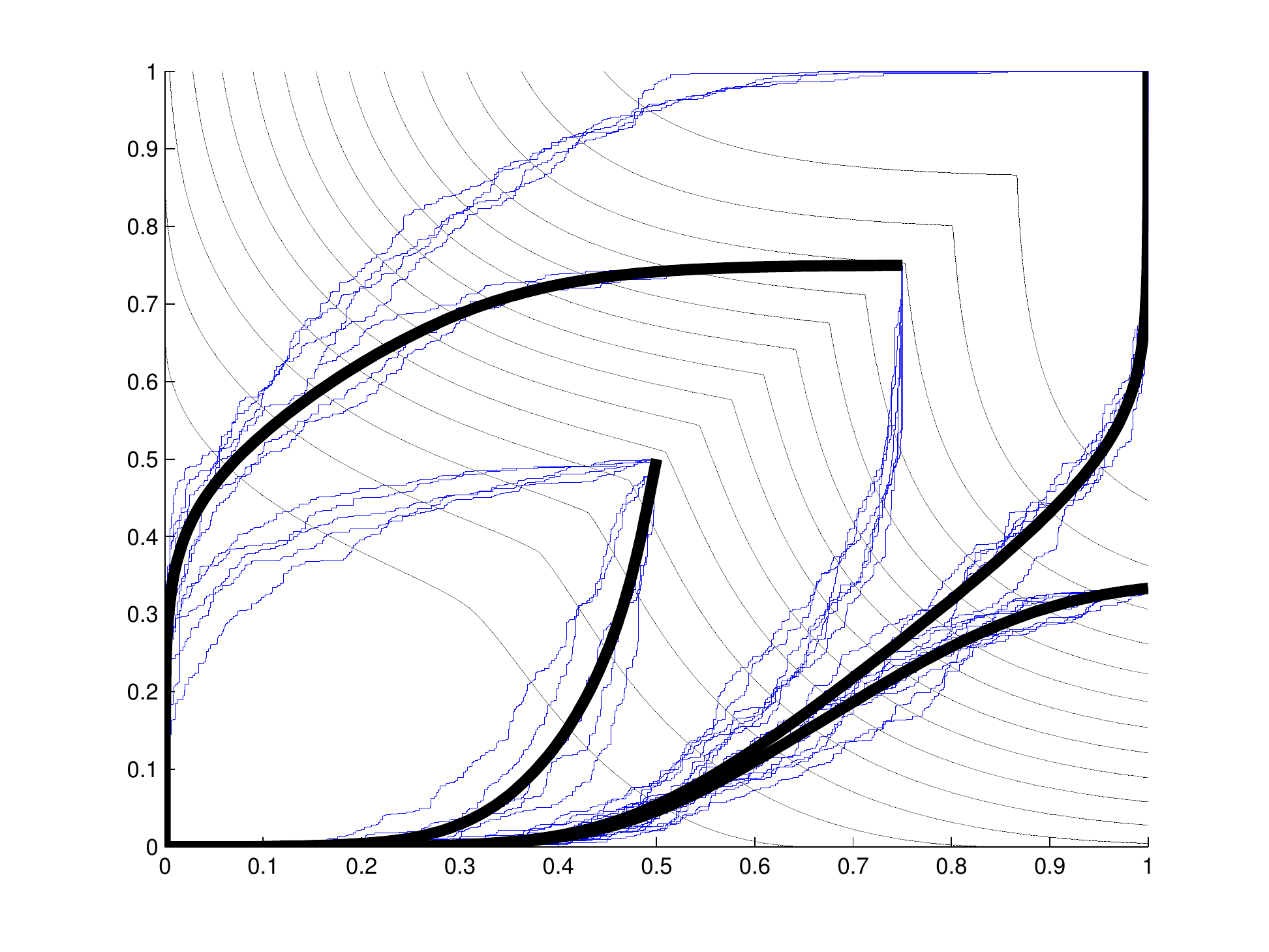}
                \label{fig:demo3_exp_optpath}}
        \subfigure[Exponential DLPP with mean $\lambda_3$]{
                \includegraphics[width=0.45\textwidth,clip=true,trim=30 30 30 25]{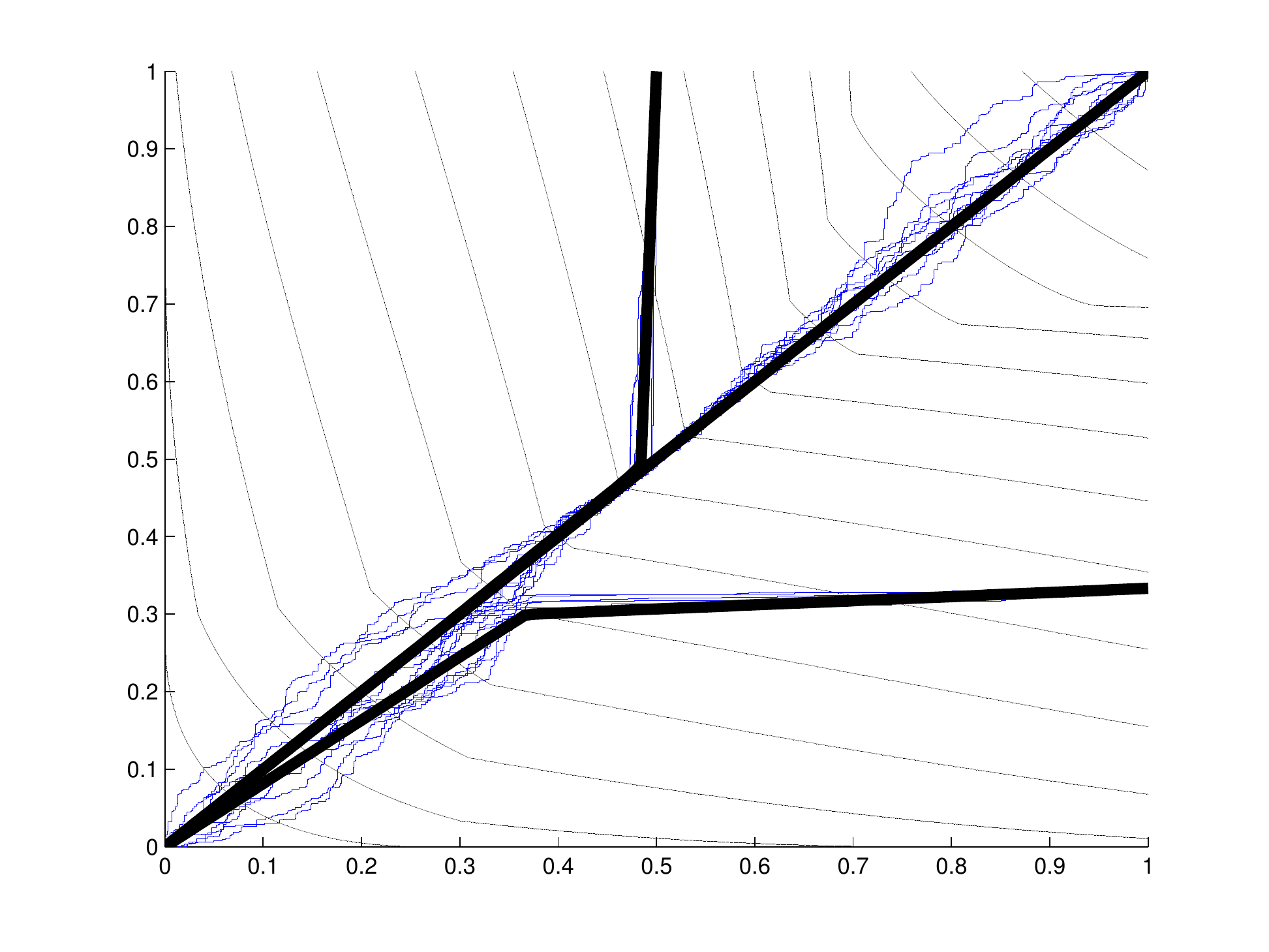}
                \label{fig:demo4_exp_optpath}}
        \caption{Comparisons of the curve $\gamma_\eps$ ($\eps=0.01$) generated by Algorithm \ref{alg:max-curve} to the optimal paths from $10$ realizations of DLPP for the macroscopic weights considered in Section \ref{sec:sim}.  In each experiment, we show the curve $\gamma_\eps$ and optimal paths for several different terminal points $x_0 \in (0,1)^2$.  Notice that in a), b) and c), there are multiple optimizing curves, and Algorithm \ref{alg:max-curve} finds only one curve, depending on the choice one makes when there are multiple maximizing arguments for $s_k^*$.  The DLPP simulations were performed on a $1000\times1000$ grid, $s_k^*$ was computed via an exhaustive search with a grid size of $0.01$. }
        \label{fig:demo2}
\end{figure}

We now show some simulation results using Algorithm \ref{alg:max-curve} to compute approximately optimal curves for the exponential/geometric DLPP simulations presented in Section \ref{sec:sim}.  Figure \ref{fig:demo2} shows the curves generated by Algorithm \ref{alg:max-curve} along with optimal paths for $10$ realizations of DLPP on a $1000\times1000$ grid. We also show the level sets of the numerical solutions of (P) to give points of reference. In all cases, we used a step size of $\eps=0.01$ and computed $s_k^*$ in Algorithm \ref{alg:max-curve} by an exhaustive search with a grid size of $0.01$. With these choices of parameters, Algorithm \ref{alg:max-curve} runs in approximately a quarter of a second, assuming the numerical solution $U$ is already available.   Note also that we implemented Algorithm \ref{alg:max-curve} exactly as written, even when $\mu$ and $\sigma$ are discontinuous, and do not substitute continuous versions as in Corollary \ref{cor:max-curve2}.

\begin{figure}[t!]
        \centering
        \subfigure[Source at $\{x_2=0\}$]{
                \includegraphics[width=0.45\textwidth,clip=true,trim=30 30 30 25]{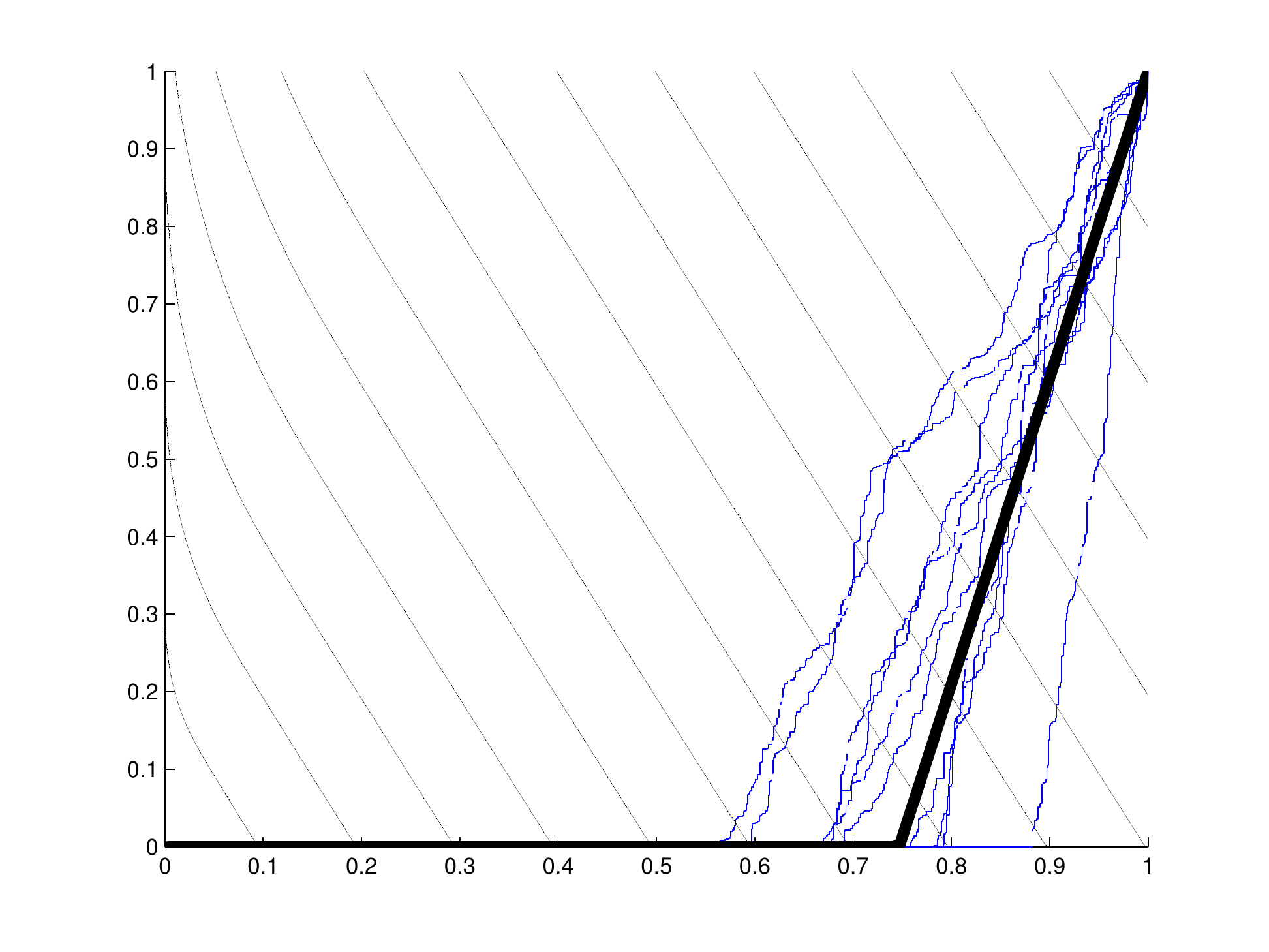}
                \label{fig:sources_demo1}}
        \subfigure[Source at $\{x_2=0.25\}$]{
                \includegraphics[width=0.45\textwidth,clip=true,trim=30 30 30 25]{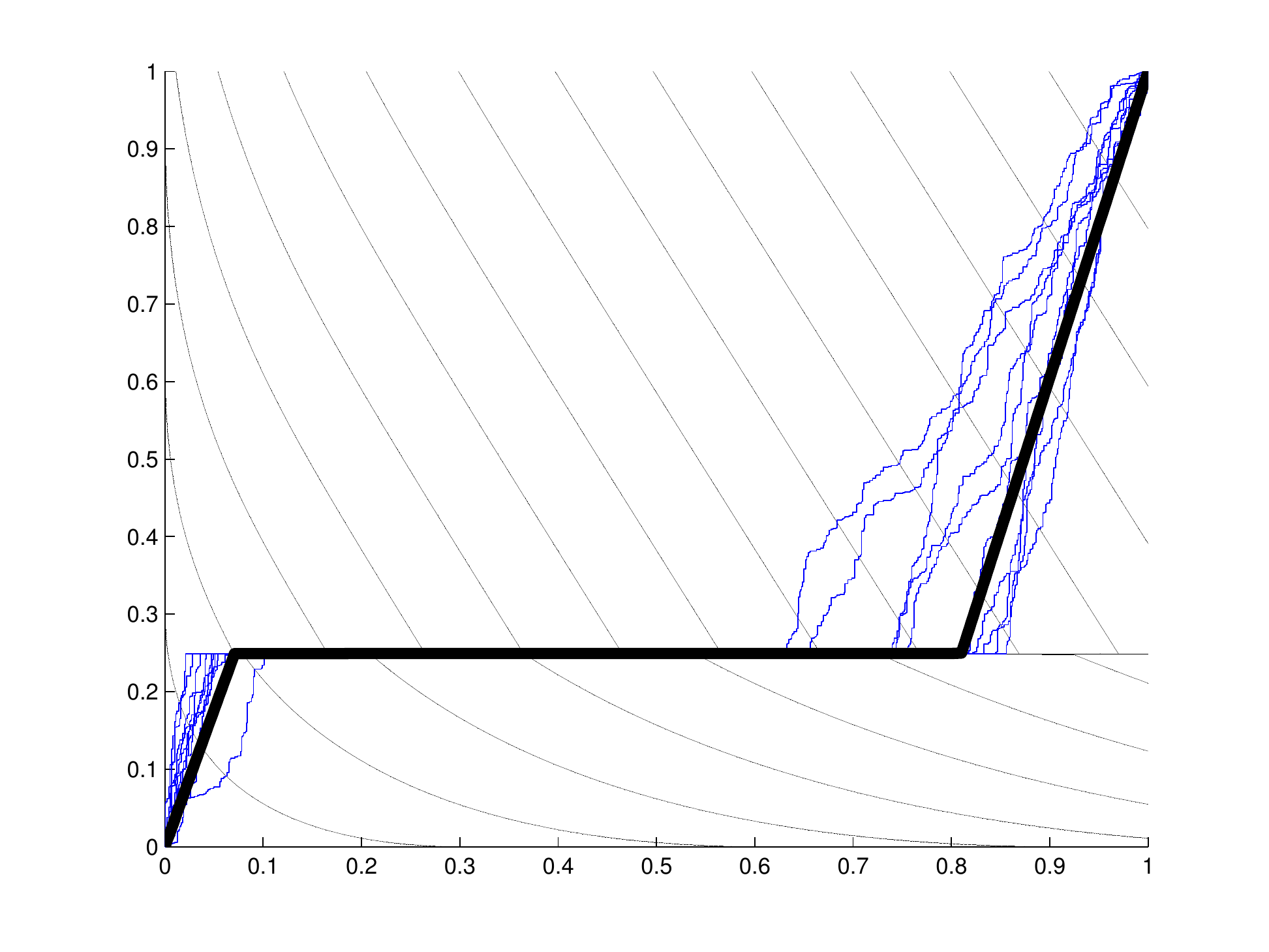}
                \label{fig:sources_demo2}}

        \subfigure[Source at $\{x_2=0.5\}$]{
                \includegraphics[width=0.45\textwidth,clip=true,trim=30 30 30 25]{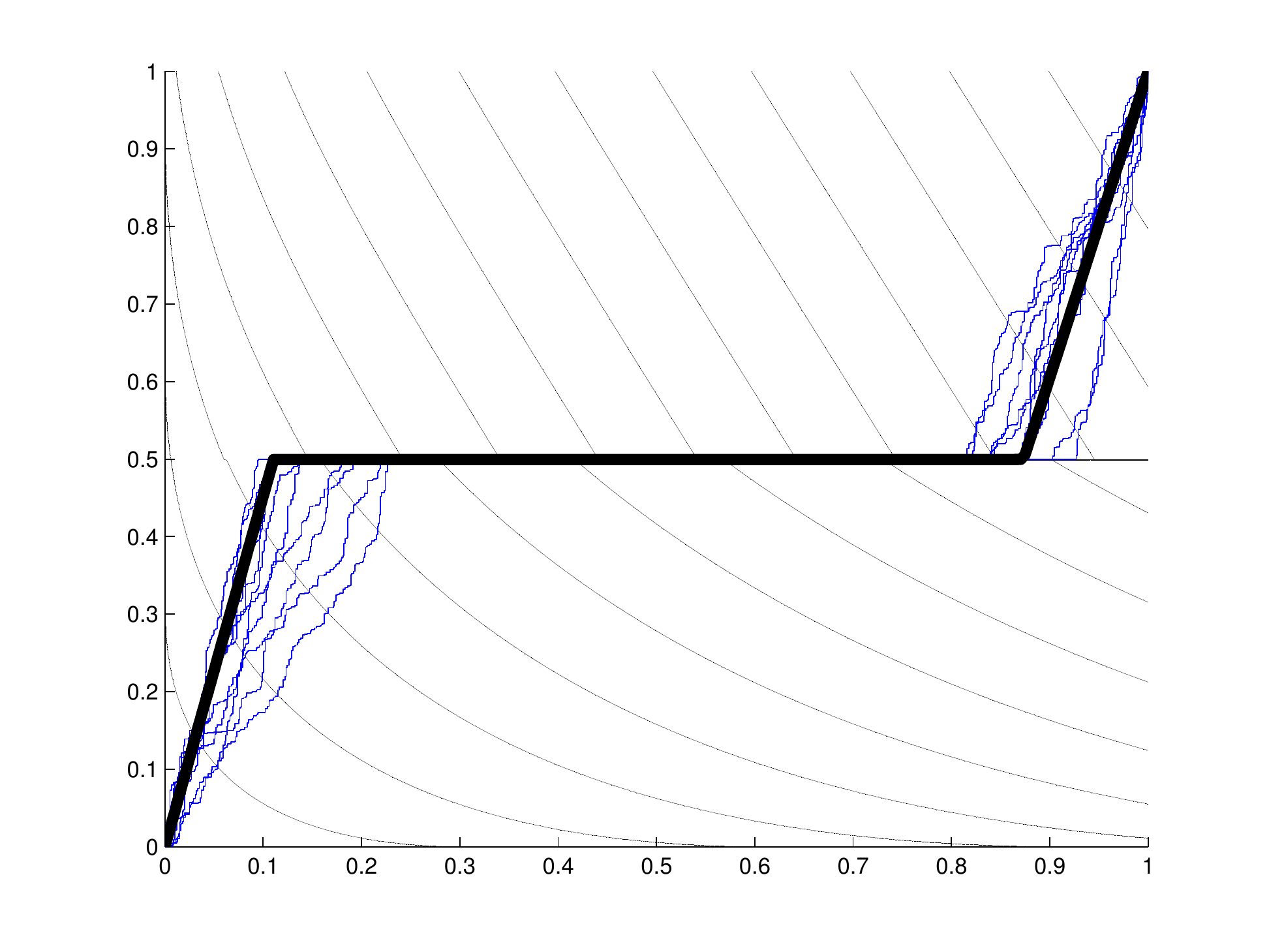}
                \label{fig:sources_demo3}}
        \subfigure[Source at $\{x_2=0.75\}$]{
                \includegraphics[width=0.45\textwidth,clip=true,trim=30 30 30 25]{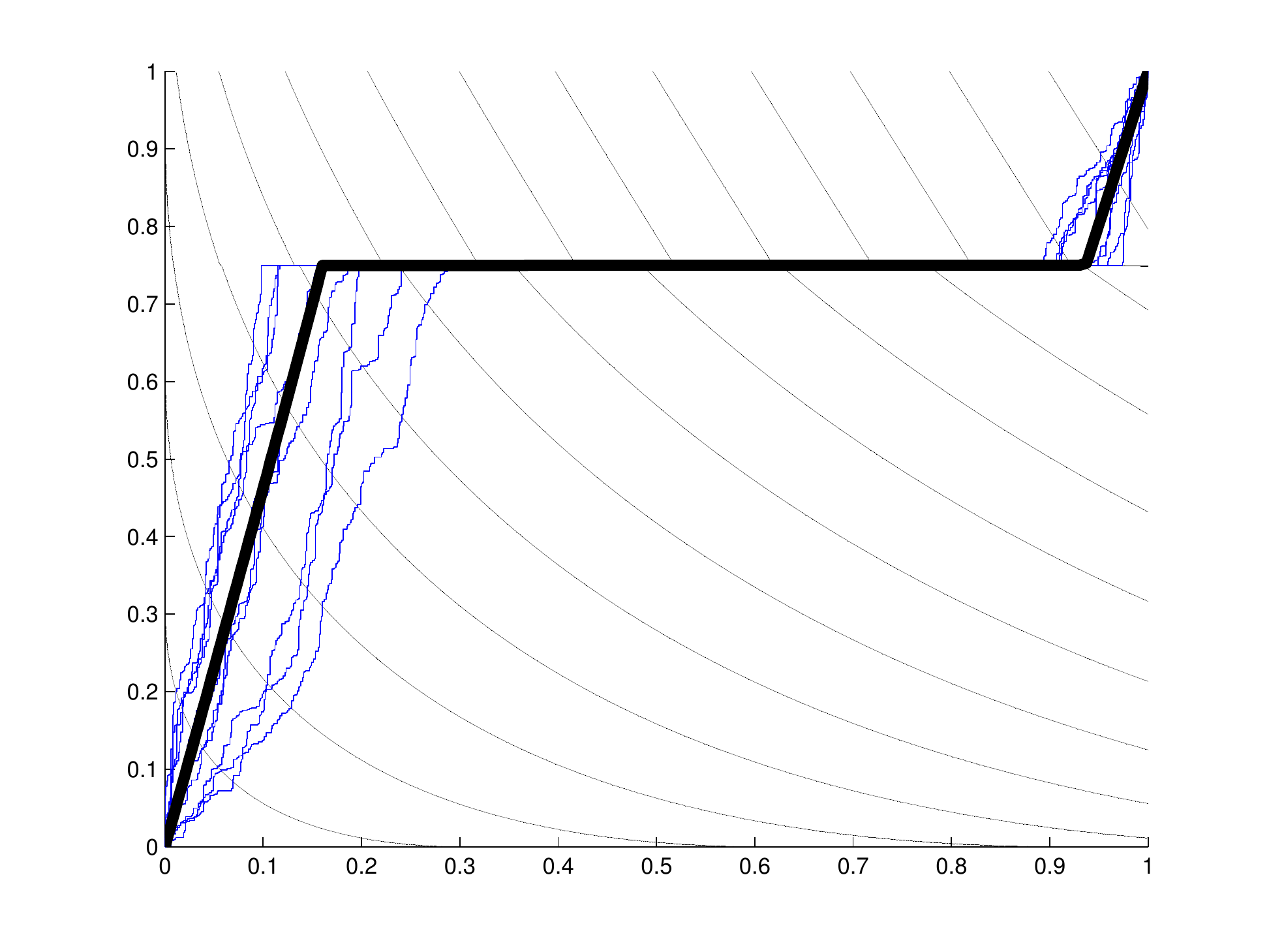}
                \label{fig:sources_demo4}}
        \caption{Comparisons of the optimal curve $\gamma_\eps$ ($\eps=0.01$) generated by Algorithm \ref{alg:max-curve} to the optimal paths from 10 realizations of exponential DLPP.  The macroscopic weight functions are constant $\mu=1$ on $[0,1]^2$ plus a source term $\mu_s=2$ concentrated on a horizontal line. The simulations were performed on a $1000\times 1000$ grid.}
        \label{fig:sources_demo}
\end{figure}

As in~\cite{rolla2008}, it is expected that the optimal paths for DLPP will asymptotically concentrate around optimal curves for the variational problem, and this is clearly reflected in the simulations in Figure \ref{fig:demo2}.  Notice that for exponential DLPP with means $\lambda_1,\lambda_2$ and geometric DLPP with parameter $q$, there are multiple maximizing curves for any terminal point $x$ along the diagonal $\{x_1=x_2\}$.  We see that some of the DLPP realizations concentrate around one optimal path, while the remaining realizations concentrate around the other.  Algorithm \ref{alg:max-curve} will of course only find one of the maximizing curves, depending on the choice one makes when there are multiple maximizing arguments in the definition of $s_k^*$.

We now show some simulations with a source term $\mu_s$.  Here we consider exponential DLPP with mean $\lambda=1$ on $[0,1]\times (0,1]$ and $\lambda = 3$ on $[0,1]\times \{0\}$. Figure \ref{fig:sources_demo1} shows the optimal curve generated by Algorithm \ref{alg:max-curve}, along with the level sets of the numerical solution of (P) and the optimal paths from 10 realizations of exponential DLPP on a $1000\times1000$ grid. 

Although our assumptions only allow sources on the boundary $\partial \R^2_+$, many of the results in the paper can be shown to hold for sources along horizontal or vertical lines in $\R^2_+$.  The idea is to find the appropriate dynamic programming principle that plays the role of  Proposition \ref{prop:dpp}, so that the effect of the weights in the bulk is separated from the source.  In the case of a source along the line $\{x_2=\alpha\}$ for $\alpha \in (0,1)$, and assuming no boundary sources, the dynamic programming principle would be
\[U(y) = \max_{0 \leq x_1 \leq x_1'\leq y_1} \left\{W(0,(x_1,\alpha)) + \int_{x_1}^{x_1'} \mu(t,\alpha) + \mu_s(t,\alpha) \, dt + W( (x_1',\alpha),y)  \right\},\]
where $U=U_{\mu+\mu_s,\sigma}$, $W=W_{\mu,\sigma}$, $\mu$ and $\sigma^2$ are, say, Lipschitz on $[0,\infty)^2$, and $\mu_s$ represents the source, which  is nonzero only on the line $\{x_2=\alpha\}$.   We can then  use this dynamic programming principle and its discrete version (similar to  \eqref{eq:dpp1}) in the proof of Theorem \ref{thm:main}.  The one caveat is that $U$ is in general discontinuous along the line containing the source, though $U$ remains locally uniformly continuous on each of the components of $\R^2_+$ obtained by removing the source line.  Thus, $U$ can only be identified via the variational problem \eqref{eq:vardef}, since we have not proven uniqueness of discontinuous viscosity solutions of (P). However, our  numerical results suggest that either uniqueness holds for (P) in some special cases where $U$ is discontinuous, or at the very least our numerical scheme for (P) selects the ``correct'' viscosity solution for the percolation problem.

Figure \ref{fig:sources_demo2}, \ref{fig:sources_demo3}, and \ref{fig:sources_demo4} show the optimal curve generated by Algorithm \ref{alg:max-curve}, along with DLPP simulations for sources on the horizontal lines $\{x_2=0.25\}$, $\{x_2=0.5\}$, and $\{x_2=0.75\}$, respectively.  

\subsection{TASEP with slow bond rate}
\label{sec:bond}

Finally, we consider the totally asymmetric simple exclusion process (TASEP) with a slow bond rate at the origin.  This model was originally introduced by Janowsky and Lebowitz~\cite{janowsky1994exact}, and some partial results were obtained more recently by Sepp\"al\"ainen \cite{seppalainen2001hydrodynamic}.  The process of interest is the usual TASEP with exponential rates of $1$ at all locations in $\Z$ except for the origin, which has a slower rate of $r \in (0,1]$.  One can think of this as modeling traffic flow on a road with a single toll both that every car must pass through.

Through the correspondence with DLPP, the slow bond rate corresponds to a source on the diagonal $\{x_1=x_2\}$.  In the context of our paper, we would have
\begin{equation}\label{eq:diag-source}
\mu(x) = \begin{cases}
1/r,& \text{if } x_1=x_2,\\
1,& \text{otherwise}.
\end{cases}
\end{equation}
Notice that $\mu$ does not satisfy the assumptions of Theorem \ref{thm:main}, and we do not expect the continuum limit (P) to hold in this case.

A quantity of interest is
\[\kappa(r):=  \lim_{N\to \infty} \frac{1}{N}L(N,N) \ \ \text{for } r \leq 1,\]
which corresponds to the reciprocal of the maximum TASEP current~\cite{seppalainen2001hydrodynamic}. It is known that $\kappa(1)=4$ and Sepp\"al\"ainen~\cite{seppalainen2001hydrodynamic} proved the following bounds:
\begin{equation}\label{eq:kappa-bound}
\max\left\{4,\frac{r^2 + 2(1+r)}{2r(1+r)} \right\} \leq \kappa(r) \leq 3 + \frac{1}{r}.
\end{equation}
It is an open problem to determine $\kappa(r)$ for $r < 1$.  In particular, one is interested in whether $\kappa(r) > 4$ for all $r <1$, or if there are some values of $r$ close to $r=1$ for which the inverse current $\kappa(r)$ remains unchanged.  

Even though we do not expect our continuum limit Hamilton-Jacobi equation to hold for the slow bond rate problem, it is nevertheless interesting to see what our results would say about this open problem were they to hold.  It is easy to see that $U_{\mu,\sigma}(1,1) = 4/r$ for $\mu=\sigma$ given by \eqref{eq:diag-source}.  Indeed, one can see that the optimal curve in the variational problem \eqref{eq:vardefU} must lie on the diagonal $\{x_1=x_2\}$, which gives the energy $4/r$.  This would suggest that
\[\kappa(r) =   \lim_{N\to \infty} \frac{1}{N}L(N,N) = U_{\mu,\sigma}(1,1)=\frac{4}{r}.\]
Notice that this violates the bounds in \eqref{eq:kappa-bound}, which indicates that the Hamilton-Jacobi equation continuum limit (Theorem \ref{thm:main}) does \emph{not} hold for sources along diagonal lines.  

It has recently come to our attention that the slow bond rate problem has been setteled by Basu, Sidoravicius, and Sly~\cite{basu2014last}. They show that the inverse current is \emph{always} affected when $r<1$, but do not give an explicit formula for $\kappa(r)$.

\section{Discussion and future work}
\label{sec:discussion}

In this work, we identified a Hamilton-Jacobi equation for the continuum limit of a macroscopic two-sided directed last passage percolation (DLPP) problem.  We rigorously proved the continuum limit when the macroscopic rates are discontinuous.  Furthermore, we presented a numerical scheme for solving the Hamilton-Jacobi equation, and an algorithm for finding optimal curves based on a dynamic programming principle.  Below we make some remarks, discuss simple extensions of this work, and ideas for future work. 
\begin{itemize}
  \item \textbf{Regularity of $\mu,\sigma$:} There are many simple modifications of (F1) under which one can prove Theorem \ref{thm:main}.    For example, the existence of the set $\Omega$ bounded by the strictly decreasing curve $\Gamma$ and $\partial \R^2_+$ on which $\mu=\sigma=0$ is not necessary, and one can check that the proofs hold without this assumption.  This would correspond to a TASEP model with step initial condition.  The curves $\Gamma_i$ on which $\mu$ and $\sigma$ may admit discontinuities can all be chosen to be strictly decreasing instead of increasing, with appropriate modifications in the proofs.  In fact, we can even allow the  curves to switch from strictly increasing to strictly decreasing, provided the critical point is isolated, and we make an additional cone condition assumption at this point. However, the curves $\Gamma_i$ cannot have any positive measure flat regions, as this can induce discontinuities in $U$, as shown in Remark \ref{rem:discont}. 
  \item \textbf{Discontinuous viscosity solutions:}  The regularity assumption (F1) was chosen to ensure that $U$ is locally uniformly continuous.  This is essential for invoking the  Arzel\`a-Ascoli Theorem in the proof of Theorem \ref{thm:perturbation}, and in the proof of the comparison principle for (P) (Theorem \ref{thm:comp-trunc}).  We believe that Theorem \ref{thm:main} holds under far more general assumptions on $\mu$, allowing $U$ to be discontinuous.  Presently, we do not know how to prove this.  The largest obstacle seems to be proving uniqueness of viscosity solutions of (P) when the solutions $U$ and the macroscopic weights $\mu$ are discontinuous. Our numerical results seem to support this conjecture, as the numerical scheme is able to very accurately capture discontinuities in $U$.  
  \item \textbf{Hydrodynamic limit of TASEP:} As we showed in Section \ref{sec:formal-eq},  the Hamilton-Jacobi equation (P) is formally equivalent to the conservation law governing the hydrodynamic limit of TASEP~\cite{georgiou2010,seppalainen1996hydrodynamic}.  It would be very interesting to make this connection rigorous.
  \item \textbf{Higher dimensions:} The main obstacle in generalizing the Hamilton-Jacobi equation (P), and the results in this paper, to dimensions $d\geq 3$, is the fact that the exact form of the time constant \eqref{eq:time-const} for  \iid~random variables $X(i,j)$ is unknown.  If an exact form for the time constant $U$ were to be discovered for $d\geq 3$, then we anticipate no problems in generalizing the results in this paper to higher dimensions. We should note that although the exact form of $U$ is unknown for $d\geq 3$, it is known that $U$ is continuous, 1-homogeneous, symmetric in all variables, and superadditive, under fairly broad assumptions on the distribution of $X(i,j)$~\cite{martin2004limiting}.  This is enough to show that $U$ is the viscosity solution of some Hamilton-Jacobi equation, but the explicit form of the equation is unknown.
\end{itemize}

\begin{acknowledgements}
The author would like to thank Jinho Baik for suggesting the problem and for stimulating discussions. The author would also like to thank the anonymous referee whose comments and suggestions have greatly improved this manuscript.
\end{acknowledgements}

\appendix

\section{Proof of Theorem \ref{thm:comp-trunc}}
For completeness we give the proof of Theorem \ref{thm:comp-trunc} here.  The proof is similar to \cite[Theorem 2.8]{calder2014}.
\begin{proof}
Suppose that
\[\lambda := \sup_{\R^d_+} (u-v) > 0.\]
Let
\begin{equation}\label{eq:R-def}
R = \sup \left\{ r > 0 \, : \, u \leq v + \frac{\lambda}{2}  \ \text{ on }  \ D_r\right\},
\end{equation}
where
\begin{equation}\label{eq:Dr-def}
D_r = \{x \in \R^d_+ \, : \, x_1 +  \cdots + x_d < r\}.
\end{equation}
Since $\O \in \R^d_+$, we have by hypothesis that  $u \leq v$ on $\partial \R^d_+$.  Therefore, since $u$ and $v$ are continuous we have $R \in (0,\infty)$. By \eqref{eq:R-def} there exists $\xi_0 \in \R^d_+\cap \partial D_R$ such that 
\begin{align}\label{eq:z_0-prop}
&u(\xi_0)=v(\xi_0) + \frac{\lambda}{2} \ \  \text{and} \notag \\
&\text{every neighborhood of } \xi_0 \text{ contains some } y \in \R^d_+ \text{ with } u(y) > v(y) + \frac{\lambda}{2}.
\end{align}
For $t>0$ set $\xi = \xi_0 + (t,\dots,t)$ and
\begin{equation}\label{eq:G-def}
\G = \{x \in [0,\infty)^d \, : \, x \leqq \xi \}.
\end{equation}
Let $u^\xi$ denote the $\xi$-truncation of $u$.  By \eqref{eq:z_0-prop} and \eqref{eq:R-def} we see that
\begin{equation}\label{eq:delta-def}
\delta := \sup_{\R^d_+} (u^\xi - v) > \frac{\lambda}{2}> 0
\end{equation}
By \eqref{eq:z_0-prop} we have $u(\xi_0) >v(\xi_0)$, and hence $\xi_0 \in \O$.  Let $\eps_{\xi_0}$ and $\v_{\xi_0} \in \S^1$ be as given in (H3)${}_\O$.   Choose $t>0$ small enough, and 
$\eps_{\xi_0}>0$ smaller if necessary, so that $\G\setminus D_R \in B_{\eps_{\xi_0}}(\xi_0) \subset \R^d_+$.  For $\alpha > 0$ define
\begin{equation}\label{eq:aux-function}
\Phi_\alpha(x,y) = u^\xi(x) - v(y) - \frac{\alpha}{2}\left| x - y - \frac{1}{\sqrt{\alpha}} \v_{\xi_0}\right|^2.
\end{equation}

We claim that for $\alpha$ large enough, there exists $x_\alpha,y_\alpha \in B_{\eps_{\xi_0}}(\xi_0)$ such that 
\begin{equation}\label{eq:Malpha}
M_\alpha := \sup_{\R^d_+ \times \R^d_+} \Phi_\alpha = \Phi_\alpha(x_\alpha,y_\alpha).
\end{equation}
To see this, first substitute $y=x-\frac{1}{\sqrt{\alpha}}\v_{\xi_0}$ into \eqref{eq:Malpha} to find
\[M_\alpha \geq u^\xi(x) - v\left(x-\frac{1}{\sqrt{\alpha}}\v_{\xi_0}\right),\]
for any $x \in \R^d_+$ such that $x-\frac{1}{\sqrt{\alpha}} \in \R^d_+$.
Since $u^\xi$ and $v$ are continuous, it follows from \eqref{eq:delta-def} that
\begin{equation}\label{eq:Mapos}
\liminf_{\alpha \to \infty} M_\alpha \geq \sup_{\R^d_+} (u^\xi-v) = \delta >  \frac{\lambda}{2} > 0.
\end{equation}
Since $u^\xi$ is bounded, and $v$ is monotone, we have by \eqref{eq:aux-function}  that
\begin{equation}\label{eq:xy-close}
|x-y| \leq \frac{C}{\sqrt{\alpha}} \ \ \text{whenever } \Phi_{\alpha}(x,y) \geq 0.
\end{equation}
Let $x,y \in \R^d_+$ such that $\Phi_\alpha(x,y) \geq 0$.  
Set $w = \pi_x(y) = \pi_y(x)$ and $\hat{w} = \pi_\xi(w)$, and define
\begin{equation}\label{eq:xyhat}
\hat{x} = x + \hat{w} - w \ \ \text{and} \ \ \hat{y} = y + \hat{w} -w.
\end{equation}
A short calculation shows that $\pi_\xi(x)=\pi_\xi(\hat{x})$.  
Since $u^\xi = u \circ \pi_\xi$ we have
\begin{equation}\label{eq:uz}
u^\xi(\hat{x}) = u^\xi(\pi_\xi(\hat{x})) = u^\xi(\pi_\xi(x)) = u^\xi(x).
\end{equation}
Since $v$ is Pareto-monotone and $\hat{y} \leqq y$ we have by \eqref{eq:uz} that
\begin{equation}\label{eq:1part}
u^\xi(\hat{x}) - v(\hat{y}) \geq u^\xi(x) - v(y)
\end{equation}
Since $\hat{x}-\hat{y} = x-y$, we see from \eqref{eq:1part} and \eqref{eq:aux-function} that
\begin{equation}\label{eq:xyhat-better}
\Phi_\alpha(\hat{x},\hat{y}) \geq \Phi_\alpha(x,y).
\end{equation}
Furthermore, by \eqref{eq:xy-close} we have
\[|\hat{x}-\hat{w}| = |x-w| \leq |x-y| \leq \frac{C}{\sqrt{\alpha}}.\]
Similarly we have $|\hat{y} - \hat{w}| \leq \frac{C}{\sqrt{\alpha}}$.  Since $\hat{w} \leqq \xi$ we have
\[\hat{x},\hat{y} \in \G_\alpha :=\left\{x' \in [0,\infty)^d \, : \, x' \leqq \xi + \frac{C}{\sqrt{\alpha}}(1,\dots,1)\right\}.\]
It follows from this and \eqref{eq:xyhat-better} that for every $\alpha>0$ there exists $x_\alpha,y_\alpha \in \G_\alpha$ such that $M_\alpha=\Phi_\alpha(x_\alpha,y_\alpha)$.  By \eqref{eq:xy-close} we may pass to a subsequence if necessary to find $x_0 \in \G$ such that $x_\alpha,y_\alpha \to x_0$ as $\alpha \to \infty$.  Then we have
\[\limsup_{\alpha \to \infty} M_\alpha \leq \lim_{\alpha \to \infty}u^\xi(x_\alpha) - v(y_\alpha)\leq \delta.\]
Combining this with \eqref{eq:Mapos} we have $M_\alpha \to \delta=u^\xi(x_0) - v(x_0)$ and 
\begin{equation}\label{eq:ptozero}
\frac{\alpha}{2} \left| x_\alpha - y_\alpha - \frac{1}{\sqrt{\alpha}} \v_{\xi_0}\right|^2 \longrightarrow 0.
\end{equation}
Since $\delta > \lambda/2$, it follows from the definition of $R$ \eqref{eq:R-def} that $x_0 \in \G\setminus D_R\subset B_{\eps_{\xi_0}}(\xi_0)$.  Therefore, for $\alpha>0$ large enough we have $x_\alpha,y_\alpha \in B_{\eps_{\xi_0}}(\xi_0)$, which establishes the claim.

Letting $p = \alpha\left(x_\alpha-y_\alpha - \frac{1}{\sqrt{\alpha}}\right)$ we have by \eqref{eq:Malpha} that $p \in D^+u^\xi(x_\alpha) \cap D^- v(y_\alpha)$.  By hypothesis we have
\begin{equation}\label{eq:super-solution2}
H^*(y_\alpha,p) \geq a.
\end{equation}
Since $u$ is truncatable, $u^\xi$ is a viscosity solution of \eqref{eq:u-sub} and therefore
\begin{equation}\label{eq:sub-solution2}
H_*(x_\alpha,p) \leq 0.
\end{equation}
Subtracting \eqref{eq:sub-solution2} from \eqref{eq:super-solution2} we have
\begin{equation}\label{eq:to-contradict}
a \leq H^*(y_\alpha,p) - H_*(x_\alpha,p).
\end{equation}
Let $w_\alpha = x_\alpha - y_\alpha - \frac{1}{\sqrt{\alpha}}\v_{\xi_0}$ and note that
\[x_\alpha = y_\alpha + \eps \v,\]
where
\[\eps = \frac{1}{\sqrt{\alpha}} |\v_{\xi_0} + \sqrt{\alpha}w_\alpha| = |x_\alpha-y_\alpha| \ \ \text{ and } \ \ \v = \frac{\v_{\xi_0} + \sqrt{\alpha}w_\alpha}{|\v_{\xi_0} + \sqrt{\alpha}w_\alpha|}.\]
By \eqref{eq:ptozero} we have $\sqrt{\alpha}w_\alpha \to 0$.  Therefore, for $\alpha$ large enough we have $|\v_{\xi_0} - \v| < \eps_{\xi_0}$ and $\eps < \eps_{\xi_0}$.  Since $y_\alpha \in B_{\eps_{\xi_0}}(\xi_0)$ we can invoke (H3)${}_\O$ to find that
\begin{equation}\label{eq:regularity}
H^*(y_\alpha,p) - H_*(x_\alpha,p) = H^*(y_\alpha,p) - H_*(y_\alpha + \eps \v,p) \leq m(|p||x_\alpha-y_\alpha| + |x_\alpha-y_\alpha|).
\end{equation}
Note that
\begin{align*}
|p| |x_\alpha - y_\alpha| &{}={} \alpha\left|x_\alpha - y_\alpha - \frac{1}{\sqrt{\alpha}} \v_{\xi_0}\right||x_\alpha-y_\alpha| \\
&{}={}\alpha\left|x_\alpha - y_\alpha - \frac{1}{\sqrt{\alpha}} \v_{\xi_0}\right|\left|x_\alpha - y_\alpha - \frac{1}{\sqrt{\alpha}} \v_{\xi_0} + \frac{1}{\sqrt{\alpha}}\v_{\xi_0}\right|\\
&{}\leq{} \alpha w_\alpha^2 + \sqrt{\alpha} w_\alpha.
\end{align*}
Combining this with \eqref{eq:regularity} and \eqref{eq:to-contradict} we have
\[0 < a \leq m(\alpha w_\alpha^2 + \sqrt{\alpha} w_\alpha + |x_\alpha-y_\alpha|).\]
Sending $\alpha \to \infty$ yields a contradiction.
\end{proof}

%\bibliographystyle{abbrv}
%\bibliography{ref}

\end{document}